%

\documentclass[ejs, preprint
]{imsart}


\doi{10.1214/154957804100000000}
\pubyear{0000}
\volume{0}
\firstpage{0}
\lastpage{0}


\newcommand{\declarecommand}[1]{\providecommand{#1}{}\renewcommand{#1}} 
\usepackage{amsmath, amssymb, color}
\usepackage{amsthm} 
\usepackage[colorlinks,citecolor=blue,urlcolor=blue]{hyperref}
\usepackage{xcolor,colortbl}
\usepackage{color}
\usepackage{amsfonts, fancybox}
\usepackage{amssymb}
\usepackage[american]{babel}
\usepackage{enumitem}
%
\usepackage{psfrag,color}
\usepackage{dcolumn}
\usepackage[format=hang, justification=centering, singlelinecheck=false]{subcaption}
\usepackage{flafter, bm, dsfont}
\usepackage[section]{placeins}

\usepackage{multirow}
\usepackage{color}
\usepackage{amsmath}
\usepackage{float}
\usepackage{mathrsfs}
\usepackage{amsfonts}
\usepackage{dsfont}
\usepackage{amssymb}
\usepackage{algorithmicx, algorithm, algpseudocode}
\MakeRobust{\Call}
\usepackage{graphicx}

\usepackage[noadjust]{cite}
\usepackage{tikz}
\usetikzlibrary{positioning}

\topmargin -.45in \oddsidemargin 0in \evensidemargin 0in \textwidth
6.5in \textheight 8.75in

\newcommand{\bc}{\begin{center}}
\newcommand{\ec}{\end{center}}
\newcommand{\ba}{\begin{array}}
\newcommand{\ea}{\end{array}}
\newcommand{\be}{\begin{eqnarray}}
\newcommand{\ee}{\end{eqnarray}}
\newcommand{\bel}{\begin{eqnarray}\label}
\newcommand{\eel}{\end{eqnarray}}
\newcommand{\bes}{\begin{eqnarray*}}
\newcommand{\ees}{\end{eqnarray*}}
\newcommand{\bn}{\begin{enumerate}}
\newcommand{\en}{\end{enumerate}}

\definecolor{MIT}{cmyk}{.24, 1.00, .78, .17} 
\definecolor{pink}{cmyk}{0, 1, 0, 0} 
\definecolor{darkgreen}{cmyk}{1,0, 1, 0}

\newcommand{\iid}{i.i.d.\ }

\newtheorem{theorem}{Theorem}
\newtheorem*{theorem*}{Theorem}
\newtheorem{lemma}[theorem]{Lemma}
\newtheorem{definition}[theorem]{Definition}

\newtheorem*{proposition*}{Proposition}

\newtheorem{remark}[theorem]{Remark}

%

%
%
%
%
%
%

\usepackage[utf8]{inputenc}
\newcommand{\leadeq}[2][4]{\MoveEqLeft[#1] #2 \nonumber}

\usepackage{stmaryrd}
\usepackage{bbm}
\usepackage[abbreviations]{foreign}

\newcommand{\mockalph}[1]{}

\algnewcommand\algorithmicinput{\textbf{Input:}}
\algnewcommand\Input{\item[\algorithmicinput]}
\algnewcommand\algorithmicoutput{\textbf{Output:}}
\algnewcommand\Output{\item[\algorithmicoutput]}

\numberwithin{equation}{section}
\usepackage{mathtools}
\mathtoolsset{showonlyrefs}

\makeatletter
\newcommand{\subalign}[1]{%
\vcenter{%
\Let@ \restore@math@cr \default@tag
\baselineskip\fontdimen10 \scriptfont\tw@
\advance\baselineskip\fontdimen12 \scriptfont\tw@
\lineskip\thr@@\fontdimen8 \scriptfont\thr@@
\lineskiplimit\lineskip
\ialign{\hfil$\m@th\scriptstyle##$&$\m@th\scriptstyle{}##$\crcr
#1\crcr
}%
}
}
\makeatother

\newcommand{\DS}{\displaystyle}

\newcommand{\cA}{\mathcal{A}}

\newcommand{\cC}{\mathcal{C}}
\newcommand{\cD}{\mathcal{D}}

\newcommand{\cF}{\mathcal{F}}

\newcommand{\cM}{\mathcal{M}}

\newcommand{\cX}{\mathcal{X}}

\newcommand{\R}{\mathbbm{R}}
\newcommand{\p}{\mathbbm{P}}
\newcommand{\E}{\mathbbm{E}}

\newcommand{\N}{\mathbbm{N}}
\newcommand{\1}{\mathbbm{1}}

\newcommand{\pn}{\p_{\kern-0.25em n}}
\newcommand{\pnm}{\p_{\kern-0.25em n,m}}
\newcommand{\psubm}{\p_{\kern-0.25em m}}
\newcommand{\psubp}{\p_{\kern-0.25em p}}
\newcommand{\cfi}{\cF_{\kern-0.25em \infty}}

\newcommand{\argmin}{\mathop{\mathrm{argmin}}}
\newcommand{\argmax}{\mathop{\mathrm{argmax}}}

\newcommand{\ud}{\mathrm{d}}

\newcommand{\eps}{\varepsilon}








\newlength{\minipagewidth}
\setlength{\minipagewidth}{\textwidth}
\setlength{\fboxsep}{3mm}
\addtolength{\minipagewidth}{-\fboxrule}
\addtolength{\minipagewidth}{-\fboxrule}
\addtolength{\minipagewidth}{-\fboxsep}
\addtolength{\minipagewidth}{-\fboxsep}

%




\hyphenation{semi--su-per-vis}
\hyphenation{in-d} 
\hyphenation{pen-dant}
\hyphenation{solve--the--equa-tion}

\pdfstringdefDisableCommands{\def\eqref#1{(\ref{#1})}}

\declarecommand{\ham}{d_\mathsf{H}}
\declarecommand{\KL}{\operatorname{\mathsf{KL}}}
\declarecommand{\hel}{\operatorname{\mathsf{h}}}
\declarecommand{\dimone}{n_1}
\declarecommand{\dimtwo}{n_2}
\declarecommand{\Done}{D}
\declarecommand{\Dtwo}{\tilde{D}}

\declarecommand{\AOone}{\1}
\declarecommand{\AOtwo}{\tilde{\1}}
\declarecommand{\widesim}[2][2]{\mathrel{\overset{#2}{\scalebox{#1}[1]{$\sim$}}}}
\declarecommand{\widesimiid}{\widesim{\text{i.i.d.}}}
\declarecommand{\subG}{\mathsf{subG}}

\declarecommand{\Poi}{\mathsf{Poi}}
\declarecommand{\notecm}[1]{{\sf{\color{blue}{[CM: #1]}}}}

\declarecommand{\pmin}{p^\ast_{\mathsf{min}}}
\declarecommand{\pmax}{p^\ast_{\mathsf{max}}}
\declarecommand{\defn}{:=}
\declarecommand{\constbox}{\mathcal{C}(Y)}
\declarecommand{\mle}{\hat \theta^{\mathsf{MLE}}}
\declarecommand{\pratio}{L}

\declarecommand{\notejc}[1]{{\sf{\color{orange}{[JC: #1]}}}}
\declarecommand{\premle}{\tilde \theta}
\declarecommand{\groundtruth}{\theta^\ast}
\declarecommand{\pgt}{p^\ast}
\declarecommand{\unnoise}{E}
\declarecommand{\samples}{N}
\declarecommand{\sample}{N}
\declarecommand{\density}{\rho}
\declarecommand{\densitygt}{\rho^\ast}
\declarecommand{\constholder}{R}
\declarecommand{\expholder}{\beta}
\declarecommand{\dimtradeoff}{n}
\declarecommand{\diam}{\operatorname{diam}}
\declarecommand{\dmin}{\density_{\mathrm{\min}}}
\declarecommand{\dmax}{\density_{\mathrm{\max}}}
\declarecommand{\Bin}{\mathsf{Bin}}
\declarecommand{\Multi}{\mathsf{Multi}}
\declarecommand{\classholder}{\cD}
\declarecommand{\classholderbd}{\tilde \cD}

\declarecommand{\constlipsq}{C_1}

\declarecommand{\mtp}{\mathrm{MTP}_2}

\declarecommand{\V}{V}

\begin{document}

\begin{frontmatter}


\title{Optimal Rates for Estimation of Two-Dimensional Totally Positive Distributions}
\runtitle{Estimation of Totally Positive Distributions}



%
%

\begin{aug}

\author{\fnms{Jan-Christian} \snm{H\"utter}\thanksref{a}\ead[label=e1]{jhuetter@broadinstitute.org}},
\author{\fnms{Cheng} \snm{Mao}\thanksref{b}\corref{}\ead[label=e2]{cheng.mao@math.gatech.edu}},
\author{\fnms{Philippe} \snm{Rigollet}\thanksref{c}\ead[label=e3]{rigollet@math.mit.edu}}
\and
\author{\fnms{Elina} \snm{Robeva}\thanksref{d}\ead[label=e4]{erobeva@math.ubc.ca}}


\runauthor{H\"utter, Mao, Rigollet and Robeva}

\address[a]{Broad Institute, 415 Main Street, Cambridge, MA, 02142, USA. \printead{e1}}
\address[b]{School of Mathematics, Georgia Institute of Technology, Suite 117, 686 Cherry Street, Atlanta, GA, 30332-0160, USA. \printead{e2}}
\address[c]{Department of Mathematics, Massachusetts Institute of Technology, Building 2, Room 106, 77 Massachusetts Avenue, Cambridge, MA, 02139-4307, USA. \printead{e3}}
\address[d]{Department of Mathematics, University of British Columbia, Room 121, 1984 Mathematics Road, Vancouver, BC, V6T 1Z2, Canada. \printead{e4}}

\end{aug}

\begin{abstract} 
We study minimax estimation of two-dimensional totally positive distributions. 
Such distributions pertain to pairs of strongly positively dependent random variables and appear frequently in statistics and probability. 
In particular, for distributions with $\beta$-H\"older smooth densities where $\beta \in (0, 2)$, we observe polynomially faster minimax rates of estimation when, additionally, the total positivity condition is imposed. 
Moreover, we demonstrate fast algorithms to compute the proposed estimators and corroborate the theoretical rates of estimation by simulation studies. 
\end{abstract}

\begin{keyword}[class=MSC]
\kwd[Primary ]{62G05, 62G07}
\end{keyword}

\begin{keyword}
\kwd{totally positive distributions}
\kwd{nonparametric density estimation}
\kwd{shape-constrained estimation}
\end{keyword}



\end{frontmatter}

\section{Introduction}

For a set $\cX = \prod_{i=1}^d \cX_i$ where each $\cX_i$ is totally ordered\footnote{A set $\cX$ is totally ordered if it is equipped with a total order, that is, a binary relation which is antisymmetric, transitive and connex. This work is only concerned with $\cX \subseteq \R$ equipped with its natural order.}, a function $p : \cX \to \R$ is called \emph{multivariate totally positive of order 2} ($\mtp$)~\cite{Karlin, KarRin80a} if 
\begin{align}
p(x\wedge y)p(x\vee y) \ge p(x)p(y)\,,  \qquad\forall\, x,y\in \mathcal{X},
\label{eq:mtp2}
\end{align}
where $\land$ and $\lor$ denote the coordinate-wise min and max operators respectively. 
The $\mtp$ condition is also known as the \emph{FKG lattice condition} because of its central role in the FKG inequality~\cite{fortuin1971correlation}. It is sometimes referred to as \emph{log-supermodularity} because of its similarity up to morphism to supermodularity~\cite{Hof63, QueSpiTar98}. 
Throughout, we say that a probability distribution is $\mtp$ if it has an $\mtp$ density. 

A variety of joint distributions are known to be $\mtp$, for example, order statistics of \iid variables, eigenvalues of Wishart matrices~\cite{KarRin80a}, and ferromagnetic Ising models~\cite{Leb72}. Furthermore, Gaussian and binary latent tree models are signed $\mtp$, that is, there exists a sign change of each coordinate making the distribution $\mtp$~\cite{KarRin81, LUZ}. In particular, all these distributions exhibit positive association, a marked feature of $\mtp$ distributions.
As opposed to positive association, however, the $\mtp$ property is preserved after conditioning or marginalization~\cite{KarRin80a}. 
As a result of their frequent appearances, $\mtp$ distributions have long been studied in statistics and probability~\cite{KarRin80a, karlinGaussian, BarFor00, colangelo2005some, slawski2015estimation, LUZ, RobStuTraUhl18}. 

In this paper, we study minimax estimation of an $\mtp$ distribution in dimension two\footnote{In dimension two, $\mtp$ is sometimes simply called  $\mathrm{TP}_2$ for \emph{totally positive of order 2}.} from \iid observations. 
We mainly focus on distributions on the square $[0, 1]^2$ for which density functions exist. 
Since almost surely no four observations from such a distribution form a rectangle, the $\mtp$ constraint~\eqref{eq:mtp2} is inactive on the observations and consequently, the maximum likelihood estimator over this class does not exist (see Lemma \ref{lem:ill-posedness} and Remark \ref{rem:ill-posedness} in Appendix \ref{sec:ill-posedness}). 
Therefore, we further assume that the distribution has a $\expholder$-H\"older smooth density, a widely adopted assumption in nonparametric estimation~\cite{Tsy09}. 

Smooth $\mtp$ distributions have long been studied in the literature. Examples include, but are not limited to, (1) pairwise marginals of Gaussian latent tree models~\cite{Fel73}, such as Brownian motion tree models and factor analysis models, (2) joint distributions of  pairs of time points of a strong Markov process on the real line with continuous paths~\cite{KarMcGre59}, such as a diffusion process, and (3) $\mtp$ transelliptical distributions, such as $\mtp$ multivariate $t$-distributions, which are commonly used in finance~\cite{AgrRoyUhl19}.

\paragraph{Main contribution}
Our main results can be stated informally as follows.
\begin{theorem}[Informal statement of minimax rates]
	\label{thm:informal}
Given \( N \) i.i.d. observations from a two-dimensional distribution with an $\mtp$ and $\expholder$-H\"older smooth density, the minimax rate of estimation in the squared Hellinger distance is (up to a polylogarithmic factor) 
\begin{equation}
\begin{cases}
N^{-\frac{2 \expholder}{2 \expholder + 1}}, &  \text{ if } \  0.5 \le  \expholder < 1,\\
N^{-2/3}, &  \text{ if } \  1 \le \expholder < 2,\\
N^{-\frac{2 \expholder}{2 \expholder + 2}}, & \text{ if } \   \expholder \ge 2.
\end{cases}
\end{equation}
\end{theorem}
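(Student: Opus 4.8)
\emph{Overall strategy.} The plan is to establish matching upper and lower bounds. Writing $f=\log p$, the $\mtp$ condition~\eqref{eq:mtp2} is exactly supermodularity of $f$, and since the densities in the relevant classes are bounded away from $0$ and $\infty$, the squared Hellinger distance is equivalent to the squared $L^2$-distance between the corresponding $f$'s. I would therefore reduce everything to controlling the metric entropy, in Hellinger distance, of the class $\cD_\beta$ of $\mtp$ densities on $[0,1]^2$ with $\beta$-Hölder density: once one shows
\[
\log N(\eps,\cD_\beta,\hel)\;\asymp\;\eps^{-\gamma(\beta)},\qquad \gamma(\beta):=\max\!\big(\tfrac1\beta,\ \min(1,\tfrac2\beta)\big),
\]
the claimed rates follow from standard arguments as $N^{-2/(2+\gamma(\beta))}$, which equals $N^{-2\beta/(2\beta+1)}$ for $\beta<1$, $N^{-2/3}$ for $1\le\beta<2$, and $N^{-2\beta/(2\beta+2)}$ for $\beta\ge2$ — the borderline value $\gamma=2$ at $\beta=\tfrac12$ being responsible for the polylogarithmic factor. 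The structural input throughout is the decomposition $f(x,y)=f(x,0)+f(0,y)-f(0,0)+g(x,y)$, where the interaction $g(x,y):=f(x,y)-f(x,0)-f(0,y)+f(0,0)$ is nonnegative, coordinate-wise nondecreasing, and $\beta$-Hölder; the entire benefit of the $\mtp$ constraint comes from $g$ being far simpler than a generic bivariate $\beta$-Hölder function whenever $\beta<2$.

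\emph{Upper bound.} For achievability I would analyze an estimator that is both rate-optimal and efficiently computable: the maximum-likelihood (or minimum-contrast) estimator over piecewise-constant densities on a regular $k\times k$ grid whose array of cell log-values is supermodular. This is a finite list of linear inequalities, hence a convex program whose maximizer exists, which circumvents the ill-posedness of the unconstrained $\mtp$ MLE (Lemma~\ref{lem:ill-posedness}); the resolution $k$ is tuned to the rate. The analysis is a bias/stochastic split. The bias: the cell-averaged log-density is automatically supermodular (averaging preserves the nonnegativity of mixed second differences), giving squared bias $\asymp k^{-2\min(\beta,1)}$. The stochastic term is governed by the local complexity of the supermodularity-constrained class, and it is here that the three regimes appear — the two separable one-dimensional pieces of $f$ contribute complexity of order $\eps^{-1/\beta}$, while for the interaction $g$ the supermodularity (which forces its mixed second differences over cells of side $h$ to be nonnegative but of order $h^{\beta}$) caps the complexity at order $\eps^{-\min(1,2/\beta)}$: for $\beta\ge2$ this is the unconstrained bivariate bound $\eps^{-2/\beta}$ and monotonicity is inert, whereas for $1\le\beta<2$ it collapses to $\eps^{-1}$ (the bound for a bounded monotone function) and for $\beta<1$ it is subsumed by the $\eps^{-1/\beta}$ of the separable part. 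Balancing $k^{-2\min(\beta,1)}$ against the resulting stochastic error yields $k\asymp N^{1/(2\beta+1)}$ and rate $N^{-2\beta/(2\beta+1)}$ for $\beta\in[\tfrac12,1)$ and $k\asymp N^{1/3}$ and rate $N^{-2/3}$ for $\beta\in[1,2)$; for $\beta\ge2$ one instead runs the standard unconstrained piecewise-polynomial estimator of degree $\lceil\beta\rceil-1$ and recovers the classical $N^{-2\beta/(2\beta+2)}$.

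\emph{Lower bound.} I would construct the matching packings around a fixed smooth $\mtp$ reference density $p_0$ with $\partial_{xy}\log p_0\equiv c>0$, using two families and taking whichever is stronger. (i) A \emph{separable} family: perturb $\log p_0$ by $\sum_{j\le m}\eps_j\phi_j(x)$ with $\phi_j\in C^\beta$ bumps of width $1/m$ and height $\asymp m^{-\beta}$, disjoint supports, $\eps\in\{0,1\}^m$. Adding a function of $x$ alone preserves supermodularity for free and the bumps are $L^2$-orthogonal, so Varshamov--Gilbert together with Fano yields the lower bound $N^{-2\beta/(2\beta+1)}$. (ii) A \emph{curvature-limited} bivariate family: perturb by $\sum_{(i,j)\le m}\eps_{ij}\psi_{ij}$ with $\psi_{ij}\in C^\beta$ bumps localized to cell $(i,j)$; preserving supermodularity of $\log p_0+(\cdot)$ now forces $\|\partial_{xy}\psi_{ij}\|_\infty\lesssim c$, i.e.\ height $\lesssim m^{-2}$, which for $\beta<2$ is a strictly stronger constraint than the smoothness cap $m^{-\beta}$. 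With $m^2$ orthogonal bits of height $\asymp m^{-2}$, Fano gives $m\asymp N^{1/6}$ and the lower bound $N^{-2/3}$. Taking the better of the two, (i) dominates for $\beta\le1$ and (ii) for $\beta\in(1,2)$; and for $\beta\ge2$ the curvature cap in (ii) is vacuous, the heights may be taken $\asymp m^{-\beta}$, and one recovers the unconstrained lower bound $N^{-2\beta/(2\beta+2)}$.

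\emph{Main obstacle.} The crux is unambiguously the metric-entropy estimate $\log N(\eps,\cD_\beta,\hel)\asymp\eps^{-\gamma(\beta)}$ — specifically the upper bound on it, which amounts to quantifying precisely how log-supermodularity shrinks the complexity of the interaction term: one must show that a nonnegative, coordinate-monotone, $\beta$-Hölder bivariate function is, for covering purposes, effectively a bounded monotone function when $\beta\in[1,2)$ and effectively a one-dimensional $\beta$-Hölder object when $\beta<1$, while losing every benefit of monotonicity once $\beta\ge2$. Subsidiary but still delicate points are: (a) showing the piecewise-constant supermodular-log-value sieve is a sufficiently fine net of $\cD_\beta$ in Hellinger distance and that the constrained MLE over it inherits the rate, so that the rate-optimal estimator is genuinely computable; and (b) on the lower-bound side, verifying that the two bump families remain inside the $\beta$-Hölder \emph{and} $\mtp$ classes with all constants explicit, the window $\beta\in(1,2)$ being the tightest since there neither the purely separable nor the purely unconstrained construction is sharp.
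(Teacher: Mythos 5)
Your estimator (grid discretization plus an MLE over supermodular cell log-values) and your lower-bound constructions are essentially the paper's: for $\beta\le 1$ the paper also uses one-dimensional bump perturbations that are trivially $\mtp$, and for $\beta\in[1,2)$ and $\beta\ge 2$ it also perturbs a reference log-density with strictly positive mixed derivative (it uses $\log p_0(z)=z_1z_2+\mathrm{const}$) by localized bumps whose mixed derivatives, of order $\delta k^{2-\beta}$, are absorbed by the base curvature, then applies Gilbert--Varshamov and Fano; the only extra care in the paper is the choice of per-cell amplitudes $\tilde\delta_{i,j}$ that keeps the normalizing constant identical across hypotheses, a detail your sketch elides but which is routine. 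Where you genuinely diverge is the upper-bound analysis: you propose a metric-entropy/empirical-process route, whereas the paper expands the likelihood to second order, decomposes the multinomial noise along a singular-value (discrete cosine) basis of the difference operator $\Done$, bounds the low-frequency block by dimension counting (the $\dimone/\samples$ term) and the high-frequency block by the duality $\langle \Pi\eps,\tilde\theta-\groundtruth\rangle\le \|(\Done^\dag)^\top\Pi(\eps)\Dtwo^\dag\|_\infty\,\|\Done(\tilde\theta-\groundtruth)\Dtwo^\top\|_1$, using Bernstein bounds for the multinomial fluctuations and the telescoping identity $\|\Done\theta\Dtwo^\top\|_1=\log L(p)$ valid exactly because all mixed second differences are nonnegative.

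The gap in your route is the pivotal entropy claim for $1\le\beta<2$. The interaction term $g(x,y)=f(x,y)-f(x,0)-f(0,y)+f(0,0)$ is indeed nonnegative and coordinate-wise nondecreasing, but the class of \emph{bounded, coordinate-wise monotone} functions on $[0,1]^2$ has $L^2$-entropy of order $\eps^{-2}$ (up to logarithmic factors), not $\eps^{-1}$; the one-dimensional bound $\eps^{-1}$ you invoke does not transfer, and with exponent $2$ your balance would only give $N^{-1/2}$, not $N^{-2/3}$. What actually caps the complexity is the much stronger structural fact that $g$ is the distribution function of the nonnegative measure $\partial_1\partial_2 f$ whose total mass equals $f(1,1)-f(1,0)-f(0,1)+f(0,0)=\log L(p)\lesssim 1$ (bounded Vitali/Hardy--Krause variation); such $g$ lie in the convex hull of indicators of orthants, a VC class of dimension $2$, whence $\log N(\eps)\lesssim \eps^{-1}$ -- and for the multinomial MLE rate theorems you would invoke, you need this at the level of bracketing entropy. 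So your approach can be repaired, but the repair is precisely the content you labeled the ``main obstacle,'' and it is exactly the bounded-total-mass-of-mixed-differences property (the paper's seminorm $\log L(p^\ast)$), not monotonicity, that the paper's spectral-duality argument exploits to produce the $N^{-2/3}$ term; as written, the proposal asserts rather than proves this linchpin, and the justification offered for it is incorrect.
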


It is well known that without the $\mtp$ assumption, the minimax rates for the $\expholder$-H\"older class in dimension $d$ scale as $N^{-\frac{2 \expholder}{2 \expholder + d}}$ up to a polylogarithmic factor, under various comparable models and error metrics (see, for example, \cite{Nem00}). 
Hence, our results show that for $0.5 \le \expholder < 1$, the minimax rate exhibits a one-dimensional behavior thanks to the $\mtp$ constraint; for $1 \le \expholder < 2$, the rate is polynomially faster than that without the $\mtp$ constraint and is independent of the smoothness parameter $\expholder$; for $\expholder > 2$, however, the $\mtp$ constraint has no effect on the minimax rate (see Figure \ref{fig:convergence_comparison} for a visualization). 

Note that results similar to what we obtain for $\mtp$ are expected to arise when $p$ is assumed to be smooth and log-concave. However, $\mtp$ only makes assumptions on the behavior of the function along lattice directions. While this is not directly comparable with log-concavity, it is, in essence, a weaker condition in the sense that it imposes a less stringent structure on the density. Our results indicate that when coupled with smoothness, $\mtp$ makes up for this deficiency and leads to the same rates of convergence. 

Our results for the regime $0 < \expholder < 0.5$ are unfortunately inconclusive, but the upper bounds exhibit polynomial improvement in the rates when $\mtp$ is assumed; see~\eqref{eq:gr} below.

\begin{figure}[H]
	\centering
		\includegraphics[width=0.45\textwidth]{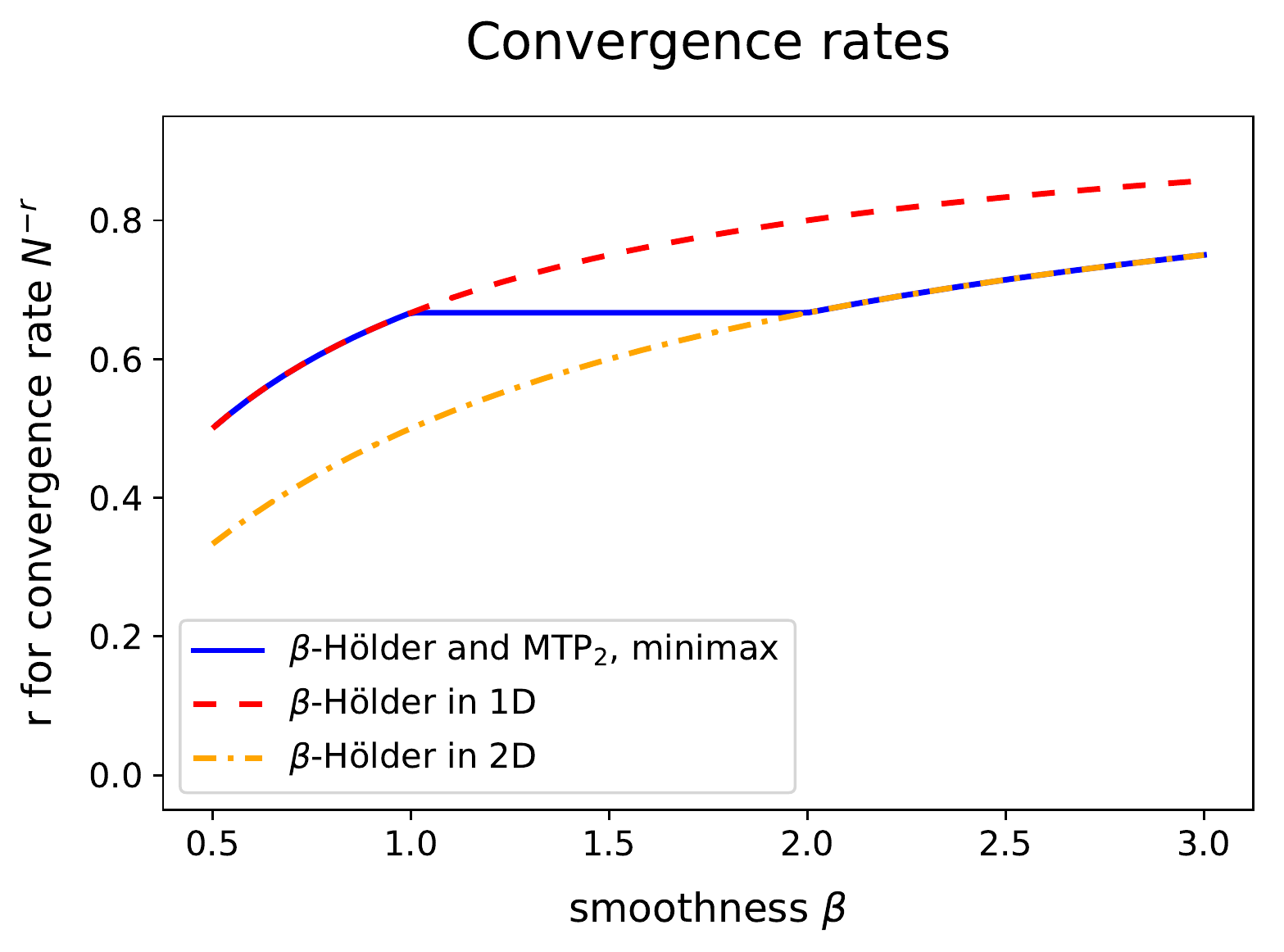}
		\caption{Visual comparison of the estimation rate for \( \beta \)-H\"older smooth \( \mtp \) distributions in Theorem~\ref{thm:informal}, with estimation rates for \( \beta \)-H\"older smooth distributions (without the $\mtp$ constraint) in 1D and 2D, suppressing logarithmic factors.
		}
	\label{fig:convergence_comparison}
\end{figure}



As a stepping stone to this problem, we also consider the following discrete version of $\mtp$. A distribution on the grid $[\dimone] \times [\dimtwo]$ is \( \mtp \) if its probability mass function (PMF) $p$, which is an $n_1\times n_2$ matrix, fulfills \eqref{eq:mtp2}. Thus, $\mtp$ says that all the $2\times 2$ minors of $p$ are non-negative:
\begin{align}
p_{ij}p_{k\ell} \geq p_{i\ell}p_{kj}, \qquad \text{for all } 1\leq i<k\leq n_1, 1\leq j<\ell\leq n_2.
\label{eq:grid-mtp2}
\end{align}
We study estimation of the PMF \( p \) from $N$ independent observations in this discrete setup.

To obtain upper bounds for estimation of a discrete $\mtp$ distribution, we employ a variant of the maximum likelihood estimator (MLE) defined in Section~\ref{sec:dis-est}. For estimating a smooth $\mtp$ density, we first discretize the space $[0, 1]^2$ and then apply the discrete MLE to obtain an estimator (defined in Section~\ref{sec:est-cts}) that achieves near-optimal upper bounds. Both estimators are computationally efficient, with the implementations discussed in more detail in Section~\ref{sec:algo}.

\medskip
\noindent \textbf{Related work.}
There has been a recent surge of interest in the estimation of $\mtp$ distributions. 
The special case of Gaussian $\mtp$ distributions has been studied by~\cite{slawski2015estimation, LUZ} from the perspective of maximum likelihood estimation and optimization. 
Maximum likelihood estimation of \emph{log-concave} $\mtp$ distributions was also analyzed recently~\cite{RSU, RobStuTraUhl18}. 
However, no statistical rate of estimation of $\mtp$ distributions is currently known. 
The present paper establishes the first minimax rates (up to logarithmic factors) of estimation of a smooth $\mtp$ density.

More broadly, our work falls into the scope of nonparametric density estimation which is a fundamental problem in nonparametric estimation. As such it has received considerable attention over the years~\cite{izenman1991review, scott2015multivariate, wasserman2016topological, chen2017tutorial, silverman2018density}. A central paradigm in this literature is to assume smoothness of the underlying density to be estimated. Such an assumption justifies a variety of statistical methods ranging from kernel density estimation to series expansions. Another approach to nonparametric estimation, and in particular to density estimation, is to use shape constraints whereby the (local) smoothness assumption is dropped and favored by a (global) synthetic constraint such as monotonicity~\cite{Grenander, Polonik}, convexity~\cite{Groeneboom, Seregin} and log-concavity~\cite{Walther_2002, Duembgen, CSS, Sam18} (see~\cite{GJ_review} for a recent overview). 
As explained above, the $\mtp$ constraint alone does not make the density estimation problem well-defined and it has been combined with another shape constraint, namely log-concavity, in~\cite{RobStuTraUhl18}. 
Instead, the present work combines $\mtp$ with smoothness to obtain a faster statistical rate than with smoothness alone, thus demonstrating compatibility of the local and the global approach. 


As we have discussed above, $\mtp$ is also called log-supermodular. In the recent paper \cite{HutMaoRigRob19}, we studied estimation of supermodular matrices (also known as \emph{anti-Monge} matrices) under sub-Gaussian noise. 
%
We note that the proof techniques used in~\cite{HutMaoRigRob19} are the starting point for the proofs in this paper, but are extended to the context density estimation.
In a parallel work \cite{FanGunSen19}, the authors study a related but slightly different model under Gaussian noise, and their proof techniques could potentially be extended to yield rates similar to the ones found in this paper.

\medskip
\noindent \textbf{Organization.}
We present the main results of the paper: upper and lower bounds for the discrete case in Section~\ref{sec:grid}, followed by the continuous case in Section~\ref{sec:cts}.
All proofs are postponed to Section~\ref{sec:proofs}.
The implementation of our estimators is discussed in Section~\ref{sec:algo}. 
Our theoretical results are complemented by numerical experiments on synthetic data in Section~\ref{sec:numerics}.
Finally, Section~\ref{sec:conclusion} includes a conclusion of the paper and a discussion of questions left for future research.

\medskip
\noindent \textbf{Notation.}
For a positive integer $n$, let $[n] = \{1, 2, \ldots, n\}$. For a
finite set $S$, we use $|S|$ to denote its cardinality. For two
sequences $\{a_n\}_{n=1}^\infty$ and $\{b_n\}_{n=1}^\infty$ of real numbers, we write
$a_n \lesssim b_n$ if there is a universal constant $C > 0$ such that $a_n
\leq C b_n$ for all $n \geq 1$. The relation $a_n \gtrsim b_n$ is
defined analogously.  We use $c$ and $C$ (possibly with subscripts) to denote 
%
universal positive constants that may change from line to line.  Given a matrix $M \in \R^{\dimone \times \dimtwo}$, we denote its $i$th row by $M_{i, \cdot}$ and its $j$th column by $M_{\cdot, j}$. 
For an entrywise positive vector $w \in \R^n$ and a vector $v \in \R^n$, we use the notation
\begin{equation}
\label{eq:fr}
\| v \|_{w} \defn \Big( \sum_{i=1}^{n} w_{i} v_i^2 \Big)^{1/2}
\end{equation}
for the $w$-weighted \( \ell_2 \) norm of the vector $v$. Similarly, for an entrywise positive matrix $b \in \R^{\dimone \times \dimtwo}$ and a matrix $a \in \R^{\dimone \times \dimtwo}$, we use $\|a\|_b$ to denote the $b$-weighted Frobenius norm of $a$.
For a reference measure $\mu$ on a (continuous or discrete) space $\mathcal{X}$, and two distributions with probability density or mass functions $p$ and $q$ respectively, we let 
\begin{align*}
\hel(p, q) \defn \Big( \int_{\mathcal{X}} \big( \sqrt{p(x)} - \sqrt{q(x)} \, \big)^2 d \mu(x) \Big)^{1/2}
\quad \text{ and } \quad 
\KL(p, q) \defn \int_{\mathcal{X}} p(x) \log \frac{p(x)}{q(x)} d \mu(x) 
\end{align*}
denote the Hellinger distance and the Kullback-Leibler (KL) divergence between the two distributions respectively.



\section{\texorpdfstring{$\mathbf{MTP_2}$}{MTP2} distribution estimation on a grid}
\label{sec:grid}


Let $p^\ast$ be a probability mass function (PMF) on the grid $[\dimone] \times [\dimtwo]$, where we assume without loss of generality that $\dimone \ge \dimtwo$. In the case where $\dimone \le \dimtwo$, our results and proofs remain valid with the roles of $\dimone$ and $\dimtwo$ swapped. 
Suppose that $p^*$ satisfies the $\mtp$ condition 
\begin{align}
\label{eq:discrete-mtp2}
p^\ast_{i, j} p^\ast_{i+1, j+1} \ge p^\ast_{i, j+1} p^\ast_{i+1, j} 
\qquad \text{for all } i \in [\dimone - 1] , \, j \in [\dimtwo - 1].
\end{align}
Note that this is equivalent to condition~\eqref{eq:grid-mtp2} by a telescoping sum argument.  
%

Suppose that we are given \( N \) i.i.d. observations $\{Z_k\}_{k=1}^N$ from the distribution on $[\dimone] \times [\dimtwo]$ with PMF $p^*$, that is, each $Z_k = (i,j)$ with probability $p^*_{i,j}$ for $(i,j) \in [\dimone] \times [\dimtwo]$. Our goal is to estimate \( p^\ast \). 
The number of observations at each point $(i, j)$ on the grid $[\dimone] \times [\dimtwo]$ is recorded in a matrix $Y = (Y_{i,j})_{i \in [\dimone], \, j \in [\dimtwo]}$, defined by 
\begin{align}
	\label{eq:jy}
Y_{i, j} \defn \sum_{k = 1}^N \1 \{ Z_k = (i, j) \} . 
\end{align}
Then $Y$ can be viewed as a multinomial random variable with distribution denoted by $\Multi(N, p^*)$. 

In addition, we define 
$$
\pmin \defn \min_{i \in [\dimone] , \, j \in [\dimtwo]} p^*_{i, j}
\quad \text{ and } \quad
\pmax \defn \max_{i \in [\dimone] , \, j \in [\dimtwo]} p^*_{i, j} ,
$$
and assume a mild lower bound on the sample size $\samples \ge 12 \log(\dimone \dimtwo / \delta)/\pmin$. 
Then $Y_{i,j}$ concentrates around its expectation as indicated by the next lemma. 
In particular, we have sufficiently many observations per entry on the grid with high probability. 
\begin{lemma} \label{lem:obs-bd}
For any $\delta \in (0, 1/2]$ and $\samples \ge 12 \log(\dimone \dimtwo / \delta)/\pmin$, it holds with probability at least $1 - 2 \delta$ that
$$\frac 12 \samples p^\ast_{i, j} \le Y_{i, j} \le \frac 32 \samples p^\ast_{i, j}$$ 
for all $(i, j) \in [\dimone] \times [\dimtwo]$. 
\end{lemma}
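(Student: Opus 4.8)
The plan is to control each coordinate $Y_{i,j}$ separately by a multiplicative Chernoff bound and then combine the $\dimone\dimtwo$ resulting estimates by a union bound. The key observation is that, although $Y$ as a whole is multinomial, the marginal law of a single entry is simply $Y_{i,j} = \sum_{k=1}^{\samples} \1\{Z_k = (i,j)\} \sim \Bin(\samples, p^\ast_{i,j})$, a sum of i.i.d.\ Bernoulli variables with mean $\mu_{i,j} \defn \samples p^\ast_{i,j}$. So the joint dependence between entries will play no role, and only the marginals matter.

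First I would recall the standard relative Chernoff bounds for a binomial: for $\eps \in (0,1)$,
\[
\rP\big( Y_{i,j} \le (1-\eps)\mu_{i,j} \big) \le \exp\!\big(-\eps^2 \mu_{i,j}/2\big), \qquad \rP\big( Y_{i,j} \ge (1+\eps)\mu_{i,j} \big) \le \exp\!\big(-\eps^2 \mu_{i,j}/3\big).
\]
Taking $\eps = 1/2$ and adding the two tail estimates gives $\rP\big( |Y_{i,j} - \mu_{i,j}| > \tfrac12 \mu_{i,j} \big) \le \exp(-\mu_{i,j}/8) + \exp(-\mu_{i,j}/12) \le 2\exp(-\mu_{i,j}/12)$, whose complement is exactly the event $\tfrac12 \samples p^\ast_{i,j} \le Y_{i,j} \le \tfrac32 \samples p^\ast_{i,j}$.

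Second, the sample-size hypothesis is used precisely here: since $p^\ast_{i,j} \ge \pmin$, we have $\mu_{i,j} = \samples p^\ast_{i,j} \ge \samples \pmin \ge 12\log(\dimone\dimtwo/\delta)$, hence $\exp(-\mu_{i,j}/12) \le \delta/(\dimone\dimtwo)$, so that the failure probability at any fixed $(i,j)$ is at most $2\delta/(\dimone\dimtwo)$. A union bound over all $\dimone\dimtwo$ grid points then shows that the two-sided bound holds simultaneously for every $(i,j)$ with probability at least $1 - 2\delta$, as claimed.

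This argument is entirely routine; there is no genuine obstacle. The only points requiring a bit of care are (i) bookkeeping the constants --- the factor $12$ in the hypothesis on $\samples$ is chosen so that the weaker of the two Chernoff exponents, $\mu_{i,j}/12$, already suffices for both tails --- and (ii) noting explicitly that the multinomial dependence across entries is irrelevant, so that working with the marginal binomial law of each $Y_{i,j}$ and then union-bounding is legitimate.
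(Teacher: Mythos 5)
Your proposal is correct and follows essentially the same route as the paper: observe that each $Y_{i,j}$ is marginally $\Bin(\samples, p^\ast_{i,j})$, apply a binomial tail bound giving failure probability $2\exp(-\samples p^\ast_{i,j}/12)$ per entry (the paper invokes its Lemma~\ref{lem:bin-tail} with deviation $p^\ast_{i,j}/2$, which yields the same exponent your Chernoff bounds do), and conclude by a union bound over the $\dimone\dimtwo$ grid points using $\samples \pmin \ge 12\log(\dimone\dimtwo/\delta)$.
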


\begin{proof}
Note that marginally $Y_{i,j}$ follows the binomial distribution $\Bin(N, p^*_{i,j})$. 
Hence the result is an immediate consequence of Lemma~\ref{lem:bin-tail} with $q = p^*_{i, j}/2$, together with a union bound over $(i, j) \in [\dimone] \times [\dimtwo]$. 
\end{proof}


\subsection{Estimator}
\label{sec:dis-est}



We begin by describing the MLE  of the log-PMF $\groundtruth \in (-\infty, 0]^{\dimone \times \dimtwo}$ defined by $\groundtruth_{i, j} \defn \log p^\ast_{i, j}$. Owing to the fact that $p^*$ is a totally positive PMF, $\groundtruth$ satisfies the following two constraints:
$$
\DS\sum_{i \in [\dimone] , \,  j \in [\dimtwo]} e^{\groundtruth_{i, j}} = 1\,,\quad \text{and} \quad \Done \groundtruth \Dtwo^\top \geq 0\,,
$$
where the symbol $\geq$ denotes entrywise inequality and the difference operators $\Done \in \R^{(\dimone-1) \times \dimone}$, $\Dtwo \in \R^{(\dimtwo-1) \times \dimtwo}$ are both of the form
\begin{equation}
 \begin{bmatrix}
-1 & 1 & 0 &  \dots & 0 & 0\\
0 & -1 & 1 &  \dots & 0 & 0\\
\vdots & & &  & \vdots\\
0 & 0 & 0 &  \dots & -1 & 1
\end{bmatrix} . \label{eq:diff-op}
\end{equation}

The log-likelihood of a candidate $\theta=\log(p) \in (-\infty,0]^{n_1 \times n_2}$ is given by
\begin{align}
\log \prod_{k = 1}^N p_{Z_k} = 
\log \prod_{i \in [\dimone] , \,  j \in [\dimtwo]} (p_{i, j})^{Y_{i, j}} = \sum_{i \in [\dimone] , \,  j \in [\dimtwo]} Y_{i, j} \theta_{i, j} =\langle Y, \theta \rangle.
\end{align}
Hence the MLE is given by
\begin{align} \label{eq:mle-1}
\mle \defn \argmax_{\substack{\sum_{i, j} e^{\theta_{i, j}} = 1 \\ \Done \theta \Dtwo^\top \ge 0}} \langle Y, \theta \rangle .
\end{align}

Instead of the MLE, we study a constrained variant which is both amenable to analysis and efficiently computable\footnote{The MLE itself can also be efficiently computed; see Appendix~\ref{sec:exist} and Section~\ref{sec:numerics}.}. 
Lemma~\ref{lem:obs-bd} 
implies that with probability at least $1-2 \delta$, the true log-PMF $\groundtruth$ lies in the cube
\begin{align} \label{eq:constraint}
\constbox \defn \Big\{ \theta \in (-\infty, 0]^{\dimone \times \dimtwo} : \log \frac{2 Y_{i,j}}{3 \samples} \le \theta_{i,j} \le \log \frac{2 Y_{i,j}}{\samples} \text{ for all } i \in [\dimone], j \in [\dimtwo] \Big\} .
\end{align} 
This motivates the constrained optimization problem
\begin{align} \label{eq:est-1}
\tilde \theta \defn \argmax_{\substack{ \Done \theta \Dtwo^\top \ge 0 \\ \theta \in \constbox }} \frac{1}{\samples} \langle Y, \theta \rangle - \sum_{i \in [\dimone] , \,  j \in [\dimtwo]} e^{\theta_{i, j}} .
\end{align}
Note that the objective is concave and there are $O(\dimone \dimtwo)$ inequality constraints, so the program can be solved efficiently. However, since the constraint $\sum_{i, j} e^{\theta_{i, j}} = 1$ is replaced by a penalty term, it is not necessarily true that $\sum_{i, j} e^{\tilde \theta_{i, j}} = 1$. Hence we define the estimator of interest $\hat \theta \in \R^{\dimone \times \dimtwo}$ by normalizing $\tilde \theta$:
\begin{align} \label{eq:est-2}
\qquad \qquad \qquad 
\hat \theta_{i, j} \defn \tilde \theta_{i, j} - \log \sum_{r \in [\dimone], \, s \in [\dimtwo]} e^{\tilde \theta_{r, s}} \quad \text{ for } i \in [\dimone], \, j \in [\dimtwo] .
\end{align}
It is clear then that $\hat \theta$ is a supermodular log-PMF.
Finally, we define our estimator $\hat p = \hat p ( Y )$ by $\hat p_{i, j} \defn e^{\hat \theta_{i, j}}$, which is therefore a properly defined $\mtp$ PMF.

\subsection{Upper and lower bounds}

We measure the performance of our estimator $\hat p$ using the Hellinger distance $\hel(p^*, \hat p)$. 
%
%
For any PMF $p$ on the grid, define
\begin{equation} 
\pratio(p) \defn \frac{ p_{1,1} p_{\dimone, \dimtwo} }{ p_{\dimone, 1} p_{1, \dimtwo} }. \label{eq:ratio}
\end{equation}
The quantity $\log \big( \pratio(p) \big)$ is a seminorm of the log-PMF $\theta = \log (p)$ (see \eqref{eq:seminorm}), which measures the complexity of $\theta$. As a result, the following upper bound for our estimator $\hat p$ depends on $\log \big( \pratio(p^*) \big)$. 

\begin{theorem}[Upper bounds for estimation of discrete $\mtp$ distributions] \label{thm:upper}
Fix $\delta \in (0, 1/4]$ and suppose that we are given $N \ge  12 \log(\dimone \dimtwo / \delta)/\pmin$ independent observations from a distribution with an $\mtp$ PMF $p^\ast$ on the grid $[\dimone] \times [\dimtwo]$ where $\dimone \ge \dimtwo$. Then the estimator $\hat p$ defined above satisfies 
$$
\hel^2 ( p^\ast, \hat p ) \le \frac 12 \KL( p^\ast, \hat p ) \lesssim 
\frac{ \dimone \log(\dimone / \delta) }{\samples } 
+ ( \pmax \, \dimone \dimtwo)^{1/3}   
\Big( \log \big( \pratio(p^\ast) \big) + 1 \Big)^{2/3}  
\Big( \frac{ \log(\dimone / \delta) \log(\dimtwo) }{ \samples } \Big)^{2/3} 
$$
with probability at least $1- 4 \delta$. 
\end{theorem}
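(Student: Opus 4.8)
The plan is to follow the standard machinery for analyzing penalized / constrained least-squares-type estimators over a convex set, adapted to the KL-divergence objective and the $\mtp$ cone. First I would exploit the optimality of $\tilde\theta$ in~\eqref{eq:est-1}. Since $\groundtruth$ lies in $\constbox$ with probability at least $1-2\delta$ (by Lemma~\ref{lem:obs-bd}) and $\groundtruth$ satisfies $\Done\groundtruth\Dtwo^\top\ge 0$, it is feasible for the program, so the objective value at $\tilde\theta$ is at least that at $\groundtruth$. Writing $F(\theta)=\frac1N\langle Y,\theta\rangle-\sum_{i,j}e^{\theta_{i,j}}$, this gives $F(\tilde\theta)\ge F(\groundtruth)$, i.e. $\frac1N\langle Y,\tilde\theta-\groundtruth\rangle\ge\sum_{i,j}(e^{\tilde\theta_{i,j}}-e^{\groundtruth_{i,j}})$. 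Replacing $Y/N$ by $p^\ast$ plus a fluctuation term $(Y/N-p^\ast)$, the deterministic part $\langle p^\ast,\tilde\theta-\groundtruth\rangle-\sum_{i,j}(e^{\tilde\theta_{i,j}}-e^{\groundtruth_{i,j}})$ is exactly $-\KL(p^\ast,\tilde p)$ up to the normalization defect, so this rearranges into a \emph{basic inequality} of the shape
\begin{align*}
\KL(p^\ast,\tilde p)\ \lesssim\ \langle Y/N-p^\ast,\ \tilde\theta-\groundtruth\rangle + (\text{normalization correction}),
\end{align*}
and then one passes from $\tilde p$ to the normalized $\hat p$ (the log-sum-exp shift only helps, since $\KL$ is minimized over rescalings at the normalized point, and the shift is controlled by $\sum_{i,j}e^{\tilde\theta_{i,j}}-1$, itself small because $\tilde\theta\in\constbox$ forces $e^{\tilde\theta_{i,j}}\le 2Y_{i,j}/N\approx 2p^\ast_{i,j}$). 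The inequality $\hel^2\le\frac12\KL$ is standard.

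Next I would bound the stochastic term $\langle Y/N-p^\ast,\ \tilde\theta-\groundtruth\rangle$ by a peeling / chaining argument over the feasible set. The key structural point is that $\tilde\theta-\groundtruth$ lives in the intersection of (i) the difference cone of $\mtp$ matrices (controlled via the seminorm $\|\cdot\|$ with $\|\theta\|\sim\log L(p)$, cf.~\eqref{eq:seminorm}), (ii) the box $\constbox$, whose width in each coordinate is $\log 3$, and (iii) a neighborhood where $\|\tilde\theta-\groundtruth\|$ (in the relevant weighted norm) is comparable to $\sqrt{\KL(p^\ast,\tilde p)}$ by a Taylor expansion of the exponential, using that the coordinates stay within $\log(3/2)$ of $\groundtruth$. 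One then needs a bound on the expected supremum of the Gaussian-type (multinomial, hence sub-Gaussian with variance proxy $p^\ast_{i,j}/N$ per coordinate) process $\langle Y/N-p^\ast,\Delta\rangle$ over $\Delta$ in this set of a given radius. This is where the metric-entropy estimate for bounded $\mtp$-difference matrices enters — exactly the ingredient inherited and extended from \cite{HutMaoRigRob19} — giving an entropy of order $(\text{radius})$-independent polynomial-in-$(\dimone\dimtwo)$ times a logarithmic factor, which after Dudley integration and balancing yields the two-piece bound: a parametric $\dimone\log(\dimone/\delta)/N$ term (from the box / linear part, dimension $\sim\dimone$ after accounting for the $\dimtwo$ columns being tied together by monotonicity of ratios) plus the $(\pmax\dimone\dimtwo)^{1/3}(\log L(p^\ast)+1)^{2/3}(\log(\dimone/\delta)\log\dimtwo/N)^{2/3}$ term (from the $\mtp$-constrained bulk, the $2/3$ exponent being the signature of a $\sqrt{\text{entropy}}$ that scales like (radius)$^{1/2}$ or is radius-free). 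Finally I would convert the resulting in-probability bound on the empirical process — via a Bernstein bound for the multinomial increments together with the high-probability event of Lemma~\ref{lem:obs-bd} — and collect the failure probabilities ($2\delta$ for Lemma~\ref{lem:obs-bd}, plus $2\delta$ for the chaining tail) into the stated $4\delta$.

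The main obstacle I expect is \textbf{step two}: getting the sharp metric-entropy / Dudley bound for the $\mtp$-difference cone intersected with the sup-norm box, and correctly localizing it so that the stochastic term is controlled by $\epsilon\,\KL(p^\ast,\tilde p)+(\text{rate})$ rather than by something with the wrong power. The subtleties are (a) the weighting: the multinomial noise has heteroscedastic variance $\propto p^\ast_{i,j}$, so the natural norm controlling both the noise and the quadratic lower bound on $\KL$ is the $p^\ast$-weighted Frobenius norm, and one must show the entropy bound in this weighted metric, pulling out $\pmax$ as the price of comparing to the unweighted one; (b) the appearance of $\log L(p^\ast)$ as the effective "size" of the signal $\groundtruth$ within the cone, which requires identifying $\|\theta\|:=\log L(e^\theta)$ as the correct seminorm and checking $\tilde\theta-\groundtruth$ has seminorm $\lesssim\log L(p^\ast)$; and (c) splitting the feasible directions into a low-complexity (essentially $\dimone$-dimensional, "column-wise affine") component handled by a crude dimension count and a genuinely $\mtp$-constrained component handled by the entropy bound, which is what produces the two additive terms. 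The normalization-defect bookkeeping in step one is routine by comparison.
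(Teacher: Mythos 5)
Your steps one and two match the paper's proof almost verbatim: you correctly identify the basic inequality coming from feasibility of $\groundtruth$ in program~\eqref{eq:est-1}, the quadratic Taylor expansion of the exponential on the box $\constbox$ to get a lower bound $\frac14\sum p^*_{i,j}(\tilde\theta_{i,j}-\groundtruth_{i,j})^2 \le \langle \eps,\tilde\theta-\groundtruth\rangle$, and the passage to $\KL(p^*,\hat p)$ via the normalization step and $\log x\le x-1$. The bookkeeping of failure probabilities ($2\delta$ from Lemma~\ref{lem:obs-bd} plus two $\delta$-events for the noise) is also correct.

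The gap is in your step three, and it is substantial. You propose to control $\langle \eps,\tilde\theta-\groundtruth\rangle$ by a metric-entropy/Dudley chaining argument over the $\mtp$-difference cone intersected with the box and a weighted ball, and you assert that this is ``exactly the ingredient inherited and extended from \cite{HutMaoRigRob19}.'' That attribution is incorrect: neither that paper nor the present one uses a Dudley integral or any covering-number bound. What both actually use is an explicit \emph{spectral decomposition} of the noise based on the known trigonometric SVD of the discrete difference operator $\Done$. The noise is split as $\eps = \Pi\eps + (I-\Pi)\eps$, where $\Pi$ projects (in a $1/p^*$-weighted inner product) onto the span of $\{w_l\tilde w_r^\top : (l,r)\in J^c\}$ for a tunable index set $J$. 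The term $\langle(I-\Pi)\eps,\tilde\theta-\groundtruth\rangle$ is handled by Cauchy--Schwarz in the $p^*$-weighted norm plus Bernstein-type concentration for multinomial linear functionals, and gives the $\dimone\log(\dimone/\delta)/N + k\log(\dimone/\delta)\log\dimtwo/N$ piece. The term $\langle\Pi\eps,\tilde\theta-\groundtruth\rangle$ is handled by H\"older's inequality against the $\ell^1$ seminorm $\|\Done\theta\Dtwo^\top\|_1 = \log L(e^\theta)$ (which the box constraint keeps $\lesssim\log L(p^*)+1$) and an $\ell^\infty$ bound on $(\Done^\dag)^\top\Pi(\eps)\Dtwo^\dag$ obtained via an explicit eigenvalue/variance computation (Lemmas~\ref{lem:gram-eig} and~\ref{lem:var-bd}). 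Balancing over $k$ gives the $2/3$ exponent. Your sketch does not supply the entropy estimate you invoke, does not explain how the $(\log L(p^*)+1)^{2/3}$ factor or the $\pmax^{1/3}$ factor would emerge from a Dudley integral in the $p^*$-weighted metric, and your heuristic for the $\dimone/N$ term (``columns tied together by monotonicity of ratios'') does not reflect the actual mechanism (inclusion of $\{\dimone\}\times[\dimtwo]$ and $[\dimone]\times\{\dimtwo\}$ in $J$ to cover the nullspace of $A\mapsto \Done A\Dtwo^\top$). As written, the central step of your proposal is not a proof but a conjecture about what a chaining argument would give, motivated by a misreading of the prior work.

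One additional caveat on your step one: you claim the log-sum-exp normalization ``only helps'' because $\KL$ is minimized over rescalings at the normalized point. That heuristic is not how the paper closes the argument; it instead shows directly that $\KL(p^*,\hat p)\le\langle p^*,\groundtruth-\tilde\theta\rangle + \sum e^{\tilde\theta_{i,j}}-1\le 2\|\tilde\theta-\groundtruth\|^2_{p^*}$ via $\log x\le x-1$ and the upper half of~\eqref{eq:qua-approx}. Your shortcut is not obviously valid as stated, since $\hat p$ is not the $\KL$-minimizing rescaling of $e^{\tilde\theta}$ relative to $p^*$ in general; the inequality needs the actual computation.
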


In particular, in the 
case where $\pmax \asymp 1/(\dimone \dimtwo)$, the bound in Theorem~\ref{thm:upper} reduces to 
\begin{equation}
	\label{eq:gb}
	\hel^2 ( p^\ast, \hat p ) \lesssim   \frac{\dimone}{\samples} +
	\frac{1}{\samples^{2/3}} 
\end{equation}
up to logarithmic factors. 
The term $\frac{1}{\samples^{2/3}}$ results from the $\mtp$ shape constraint, while the term $\frac{\dimone}{\samples}$ is present even if the PMF $p^*$ has constant rows. Technically, the two terms follow from a decomposition of the noise in the proof. 
The following theorem shows that this upper bound is, in fact, optimal in the minimax sense up to logarithmic factors.

\begin{theorem}[Lower bounds for estimation of discrete $\mtp$ distributions] \label{eq:lower} \label{thm:lower}
Let $\p_{p^\ast}$ denote the probability with respect to $\sample$ independent observations from the distribution with an $\mtp$ PMF $p^\ast$ on the grid $\dimone \times \dimtwo$. For $\dimone \le \sample \le \dimone^3 \dimtwo^3$, there exists a universal constant $c>0$ such that 
$$
\inf_{\tilde p} \sup_{ p^\ast \, \mtp } \p_{  p^\ast } \Big\{ \hel^2 ( p^\ast, \tilde p ) \ge c \Big( \frac{\dimone}{\sample} + \frac{1}{\sample^{2/3}} \Big) \Big\} 
\ge \frac 13 ,
$$
where the infimum is over all estimators $\tilde p$ measurable with respect to the observations.
For $\sample \le \dimone$, we have the vacuous lower bound of constant order. 
For $\sample \ge \dimone^3 \dimtwo^3$, we have the lower bound of order $\frac{\dimone \dimtwo}{\sample}$, which is the trivial rate of estimation.
\end{theorem}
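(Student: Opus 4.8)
The plan is to establish the two terms \( \dimone/\sample \) and \( \sample^{-2/3} \) by two separate constructions and then combine them, noting that a lower bound matching \( \max\{\dimone/\sample,\ \sample^{-2/3}\} \) is, up to a factor of two, a lower bound matching the sum. For each construction I would follow the standard reduction from estimation to testing: exhibit a finite family \( \{p^{(v)}\}_{v} \) of \( \mtp \) PMFs that is well separated in squared Hellinger distance yet whose \( \sample \)-fold products are statistically indistinguishable (small pairwise \( \KL \)), and then invoke Assouad's lemma (or Fano's inequality) to conclude that no estimator can drive \( \hel^2(p^\ast,\tilde p) \) below a constant multiple of the separation with probability exceeding \( 2/3 \), which is exactly the asserted form. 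Throughout I would use the local expansions \( \hel^2(p,q)\asymp\sum_x (p_x-q_x)^2/p_x\asymp\KL(p,q) \), valid when \( q \) is a small multiplicative perturbation of \( p \), together with the tensorization identity \( \KL\big((p^{(u)})^{\otimes \sample},(p^{(v)})^{\otimes \sample}\big)=\sample\,\KL(p^{(u)},p^{(v)}) \).

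For the term \( \dimone/\sample \), restrict to the subfamily of PMFs with constant rows, \( p_{i,j}=q_i/\dimtwo \) for a probability vector \( q\in\R^{\dimone} \). Every such \( p \) satisfies \eqref{eq:discrete-mtp2} with equality, hence is \( \mtp \), and \( \hel(p^{(u)},p^{(v)})=\hel(q^{(u)},q^{(v)}) \), so the problem is exactly that of estimating a distribution on \( \dimone \) categories from \( \sample \) observations. Splitting \( [\dimone] \) into \( \dimone/2 \) pairs and, for \( v\in\{0,1\}^{\dimone/2} \), setting the two entries of the \( k \)-th pair to \( \frac{1}{\dimone}(1\pm c_0) \) with \( c_0\asymp\sqrt{\dimone/\sample}\wedge 1 \) keeps \( \sum_{i,j}p_{i,j}=1 \) automatically; Assouad's lemma then gives a risk lower bound of order \( c_0^2\asymp\dimone/\sample \). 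The constraint \( c_0\lesssim 1 \) is exactly why this term needs \( \sample\gtrsim\dimone \), and for \( \sample\lesssim\dimone \) the bound is vacuously of constant order, as stated.

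For the term \( \sample^{-2/3} \), the shape constraint must be exploited. The key algebraic point is that \( \theta=\log p \) is \( \mtp \) precisely when all discrete mixed second differences \( \theta_{i,j}-\theta_{i,j+1}-\theta_{i+1,j}+\theta_{i+1,j+1} \) are nonnegative; so if one starts from a base log-PMF \( \theta^{0} \) that is uniform up to constant factors but has all mixed second differences equal to a small positive \( h_0\asymp 1/(\dimone\dimtwo) \), there is room to add perturbations whose mixed second differences lie in \( [-h_0,h_0] \) without leaving the cone. Partition the grid into \( K\asymp \sample^{1/3} \) congruent rectangular blocks and, for \( v\in\{0,1\}^{K} \), put \( \theta^{(v)}=\theta^{0}+\sum_k v_k\,g_k \) where \( g_k \) is a two-dimensional ``wiggle'' supported on the \( k \)-th block — a tensor product of one-dimensional bump profiles that vanish at the block edges, have controlled second differences, and have vanishing block-average — mimicking the classical \( \sample^{-2/3} \) lower bound for monotone functions. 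Tuning the amplitude so that \( \hel^2 \) per active bit is of order \( 1/\sample \) forces \( \|g_k\|_\infty\asymp\sample^{-1/3} \), hence mixed second differences of order \( 1/(\dimone\dimtwo)\asymp h_0 \), which makes the per-bit \( \sample\,\KL \) of constant order; Assouad's lemma then yields a risk lower bound of order \( K\cdot\sample^{-1}\asymp\sample^{-2/3} \). Fitting \( K\asymp\sample^{1/3} \) disjoint blocks into the grid requires \( \sample^{1/3}\lesssim\dimone\dimtwo \), i.e. \( \sample\lesssim\dimone^{3}\dimtwo^{3} \), which is precisely the stated upper end of the range; beyond it one is reduced to the trivial \( \dimone\dimtwo/\sample \) rate. (When \( \dimone \) alone is large enough, a simpler variant suffices: fix all columns left of the midline to a common row-profile \( \alpha\in\R^{\dimone} \) and all columns to its right to \( \alpha\cdot e^{\gamma} \) with \( \gamma \) nondecreasing in \( i \) — the only \( \mtp \) constraint — which reduces the problem to one-dimensional isotonic estimation with per-coordinate noise level \( \asymp\dimone/\sample \) and immediately gives \( \hel^2\gtrsim\sample^{-2/3} \).)

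The main obstacle is the construction in the previous paragraph: one must write down the block-supported perturbations \( g_k \) explicitly, verify that \( \theta^{0}+\sum_k v_k g_k \) is \( \mtp \) for \emph{every} \( v \) simultaneously (this is delicate because the base curvature \( h_0 \) is exactly as large as what a single perturbation consumes, so all constants must be matched), and check that renormalizing each \( p^{(v)} \) into a genuine PMF disturbs neither the Hellinger separation nor the \( \KL \) bounds to leading order — which follows from the vanishing block-average of each \( g_k \). Everything else — the local Hellinger/\( \KL \) expansions, tensorization, the Assouad bookkeeping, stitching the two families together, and reading off the degenerate regimes \( \sample\le\dimone \) and \( \sample\ge\dimone^{3}\dimtwo^{3} \) — is routine.
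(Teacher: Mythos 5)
Your high-level plan coincides with the paper's: treat the two terms $\dimone/\sample$ and $\sample^{-2/3}$ by separate constructions, obtain the first via constant-row PMFs (a one-dimensional reduction), and obtain the second via perturbations of a supermodular base log-PMF on a grid of blocks whose size is tuned so that per-block KL is $O(1/\sample)$. Within this shared skeleton, your details diverge from the paper's in two places that are worth noting. First, for the $\sample^{-2/3}$ term the paper does not use smooth bumps with vanishing block averages; it uses \emph{piecewise-constant} perturbations $\tau^{(\ell)}_{u,v}\,\tilde\delta_{u,v}/(k_1 k_2)$ on cells, with the per-cell amplitudes $\tilde\delta_{u,v}\in[\delta,3\delta]$ chosen via the intermediate value theorem (see \eqref{eq:deltaij}) so that the normalization constant $\mathfrak{N}$ in \eqref{eq:theta-def-2} is \emph{exactly} the same for every index $\ell$; this eliminates the need to track how renormalization perturbs the Hellinger separation, which in your version must be argued to be a lower-order $O(\sample^{-4/3})$ correction. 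Because the base $\theta^0_{i,j}=u_i v_j/(k_1 k_2)$ and the perturbations are both constant inside each $(u,v)$-cell, their mixed second differences are supported on exactly the same set of lattice points (block transitions), so the matching-of-constants issue you flag as delicate is dealt with by a single inequality $6\delta\le 1$ rather than by balancing $\|f'\|_\infty$ against a smooth base curvature. Second, the paper uses Gilbert--Varshamov packings plus Fano (Theorem 2.5 of Tsybakov) for both terms, whereas you propose Assouad; both are fine and yield the same rates, though the in-probability $\ge 1/3$ form of the theorem is most directly read off the Fano route. Your ``simpler variant'' (a single midline jump with a monotone column multiplier, reducing to 1D isotonic estimation) is a genuinely different idea, not present in the paper, and can in principle deliver the $\sample^{-2/3}$ term when $\dimone$ is large enough, but it needs its own isotonic lower-bound construction and a careful translation from per-row Bernoulli noise to Hellinger, so it is not obviously simpler than the paper's piecewise-constant construction. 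Overall the argument is sound; just be aware that the details you term ``delicate'' — constant matching and renormalization stability — are precisely the points the paper's $\tilde\delta_{u,v}$ trick is designed to avoid.
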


Note that in the regime with an enormous sample size $\sample \ge \dimone^3 \dimtwo^3$, the lower bound $\frac{\dimone \dimtwo}{N}$ is achieved by the empirical frequency matrix $Y/N$ (see Appendix~\ref{sec:emp}), so there is no need to exploit the $\mtp$ constraint. 
In fact, it can be seen from the proof of Theorem~\ref{thm:upper} in Section \ref{sec:pf-ub} that the estimator \( \hat p \) also attains this rate up to logarithmic factors (see Remark~\ref{rem:ml-vs-emp}), a behavior that can be observed in the numerical experiments as well (see Figure~\ref{fig:pmf_large_V_var_N} in Section~\ref{sec:numerics}).

While our upper and lower bounds match in terms of the sample size $N$ and dimensions $(\dimone, \dimtwo)$, there are two potential improvements that can be made. 
First, the assumption $N \ge  12 \log(\dimone \dimtwo / \delta)/\pmin$ in Theorem~\ref{thm:upper} is necessary to guarantee that we have sufficient observations at each point on the grid $[\dimone] \times [\dimtwo]$, so that the (box-constrained) MLE can be properly defined and efficiently computed. 
There may exist other estimation procedures that apply in the regime where the sample size is smaller. 
Second, the upper bound contains the parameter $\pmax$ which is not present in the lower bound. 
This is likely an artifact of our proof of the upper bound and could potentially be mitigated.

\section{Smooth \texorpdfstring{$\mathbf{MTP_2}$}{MTP2} density estimation}
\label{sec:cts}

We turn to estimation of a probability distribution with a smooth $\mtp$ density $\densitygt$ on $[0, 1]^2$ with respect to the Lebesgue measure. 
Recall that $\mtp$ requires that for any $x, y \in [0, 1]^2$, 
\begin{align}
\densitygt (x \land y) \densitygt (x \lor y) \ge \densitygt (x) \densitygt (y) .
\label{eq:tp}
\end{align}
In addition, we assume that $\densitygt$ is \( \expholder \)-H\"older smooth, defined more precisely as follows.  
\begin{definition}
For \( \expholder, \constholder > 0 \), we define \( \classholder(\expholder, \constholder) \) to be the set of probability densities \( \density \) on \( [0,1]^2 \) such that \( \density \) is \( \lceil \expholder - 1 \rceil \) times continuously differentiable with
\begin{align}
	\label{eq:go}
	| \partial^{\alpha} \density(x) |
	\le {} & \constholder, \quad \text{for all } | \alpha | \le \lceil \expholder - 1 \rceil, \, x \in [0,1]^2 , \quad \text{and} \\
	| \partial^\alpha \density(x) - \partial^\alpha \density(y) |
	\le {} & \constholder \, \| x - y \|_2^{\expholder - \lceil \expholder - 1 \rceil}, \quad \text{for all } | \alpha | = \lceil \expholder - 1 \rceil \text{ and all } x, y \in [0,1]^2.
	\label{eq:fv}
\end{align}
Moreover, for $\dmin, \dmax > 0$, we define \( \classholderbd(\expholder, \constholder, \dmin, \dmax) \) to be the subset of \( \classholder(\expholder, \constholder) \) consisting of densities $\density$ such that
\begin{equation}
	\label{eq:fz}
	\dmin \le \density(x) \le \dmax, \quad \text{for all } x \in [0,1]^2 .
\end{equation}
\end{definition}

Equipped with the above definition, we assume that \( \densitygt \in \classholderbd(\expholder, \constholder, \dmin, \dmax ) \). 
Given \( N \) \iid observations from the distribution with density $\densitygt$ where $\samples > 0$, we aim to estimate the distribution.  

\subsection{Estimator}
\label{sec:est-cts}

To define an estimator of \( \densitygt \), we make use of both smoothness and the $\mtp$ assumption.
Namely, smoothness allows us to discretize the space $[0,1]^2$ into grid cells and group observations together in each cell, after which we are able to employ the $\mtp$ shape constraint. 

More precisely, for a positive integer \( \dimtradeoff \) to be determined later, we consider the equidistant discretization on \( [0,1]^2 \) with \( \dimtradeoff \) subdivisions on each dimension, that is, with grid cells 
\begin{equation}
	\label{eq:fx}
	S_{i, j} 
	\defn \Big[ \frac{i - 1}{\dimtradeoff}, \frac{i}{\dimtradeoff} \Big) \times \Big[ \frac{j - 1}{\dimtradeoff} , \frac{j}{\dimtradeoff} \Big) , 
	\quad i, j \in [\dimtradeoff] .
\end{equation}
Denote by \( Y \) the (unnormalized) histogram estimator with grid \( (S_{i, j})_{i,j=1}^{\dimtradeoff} \) for a sample \( \{ X_1, \dots, X_{N} \} \), that is,
\begin{equation}
	\label{eq:gq}
	Y_{i, j} \defn \sum_{k = 1}^{N} \1 \{ X_k \in S_{i, j} \} .
\end{equation}
Moreover, we define 
\begin{equation}
	\label{eq:fy}
	\pgt_{i, j} \defn \int_{S_{i, j}} \densitygt(x) \, \ud x .
\end{equation}
Since $\densitygt$ is $\mtp$, it is easily verified that the discrete density $\pgt$ is $\mtp$ in the sense of \eqref{eq:discrete-mtp2}. 

Given the matrix $Y$ with entries specified by \eqref{eq:gq}, we compute the estimator \( \hat p = \hat p(Y) \) defined in Section~\ref{sec:dis-est}, 
and define an estimator $\hat \density$ of the density $\densitygt$ by 
\begin{equation}
	\label{eq:gd}
	\hat \density(x) \defn \dimtradeoff^2 \hat p_{i, j} , \quad \text{ for } x \in S_{i, j} ,
\end{equation}
which is a piecewise constant estimator on the grid \( (S_{i, j})_{i, j=1}^{\dimtradeoff} \).

\subsection{Upper and lower bounds}

The performance of our estimator $\hat \density$ with respect to the Hellinger distance is characterized by the following theorem. 

\begin{theorem}[Upper bounds for estimation of smooth $\mtp$ distributions]
	\label{thm:upper-cts}
	Suppose that we are given \( N \) independent observations from an $\mtp$ distribution with density \( \densitygt \in \classholderbd(\expholder, \constholder, \dmin, \dmax) \). 
	With $\tilde \expholder \defn \expholder \land 1$ and the choice 
	\begin{equation}
		\dimtradeoff = \Big\lfloor \Big( \frac{\constholder^2 \samples}{\dmin}  \Big)^{1/(2 \tilde \expholder + 1)} \land \Big( \frac{\dmin \samples }{ \log ( \dmin \samples ) } \Big)^{1/2} \Big\rfloor  , 
	\end{equation}
we define the estimator \( \hat \density \) as in \eqref{eq:gd}. 
Moreover, suppose that $\samples$ is larger than a constant depending on $\beta, R$ and $\dmin$.  
	Then with probability at least $1 - \samples^{-4}$, the following holds. If $\expholder > 0.5 $, then
	\begin{equation}
		\hel^2(\hat \density, \densitygt) 
		\lesssim 
		\frac{  \log \samples  }{ \samples^{2 \tilde \expholder/(2 \tilde \expholder + 1)} }  + 
		\frac{ (\log \samples )^{4/3} }{ \samples^{2/3} } ,
	\end{equation} 
	and if $0 < \expholder \le 0.5$, then
		\begin{align}
			\hel^2(\hat \density, \densitygt) &\lesssim \Big( 
			\frac{ \log \samples  }{ \samples } \Big)^{ \tilde \expholder } 
			+ \frac{ (\log \samples )^{4/3} }{ \samples^{2/3} } ,
		\end{align}
where the suppressed constants depend on the quantities $\beta, R, \dmin$ and $\dmax$. 
\end{theorem}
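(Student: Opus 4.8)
The plan is to split the error, via the triangle inequality for the Hellinger distance, into a deterministic discretization bias and a stochastic term that is controlled by Theorem~\ref{thm:upper}. Introduce the piecewise-constant lift $\bar\density$ of the discretized PMF $\pgt$, defined by $\bar\density(x) \defn \dimtradeoff^2 \pgt_{i,j}$ for $x \in S_{i,j}$, where $\pgt_{i,j}$ is as in \eqref{eq:fy}; note that $\bar\density$ is itself a density and, since $\densitygt$ is $\mtp$, the PMF $\pgt$ is $\mtp$ in the sense of \eqref{eq:discrete-mtp2}. Then $\hel^2(\hat\density,\densitygt) \le 2\,\hel^2(\densitygt,\bar\density) + 2\,\hel^2(\bar\density,\hat\density)$, and it suffices to bound each term.

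\textbf{Bias term.}
Since $\densitygt \ge \dmin$ pointwise, $(\sqrt{\densitygt(x)}+\sqrt{\bar\density(x)})^2 \ge \densitygt(x) \ge \dmin$, so $\hel^2(\densitygt,\bar\density) \le \dmin^{-1}\int_{[0,1]^2}(\densitygt - \bar\density)^2$. On each cell $S_{i,j}$ the value $\bar\density$ is the average of $\densitygt$ over $S_{i,j}$, hence $\int_{S_{i,j}}(\densitygt - \bar\density)^2 \le \dimtradeoff^{-2}(\mathrm{osc}_{S_{i,j}}\densitygt)^2$. Using $\densitygt \in \classholderbd(\expholder,\constholder,\dmin,\dmax)$ with $\tilde\expholder = \expholder\land 1$ --- the Hölder bound \eqref{eq:fv} when $\expholder \le 1$, and the gradient bound \eqref{eq:go} when $\expholder > 1$ --- the oscillation of $\densitygt$ on a cell of diameter $\sqrt 2/\dimtradeoff$ is $\lesssim \constholder\,\dimtradeoff^{-\tilde\expholder}$. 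Summing over the $\dimtradeoff^2$ cells gives $\hel^2(\densitygt,\bar\density) \lesssim (\constholder^2/\dmin)\,\dimtradeoff^{-2\tilde\expholder}$. This is precisely where smoothness beyond order $1$ cannot be exploited by a histogram, which is why $\tilde\expholder$ rather than $\expholder$ appears.

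\textbf{Stochastic term.}
A direct computation using $|S_{i,j}| = \dimtradeoff^{-2}$ shows $\hel^2(\bar\density,\hat\density) = \sum_{i,j}(\sqrt{\pgt_{i,j}}-\sqrt{\hat p_{i,j}})^2$, the discrete squared Hellinger distance between $\pgt$ and $\hat p$. Now $Y$ in \eqref{eq:gq} is distributed as $\Multi(N,\pgt)$ with $\pgt$ an $\mtp$ PMF on $[\dimtradeoff]\times[\dimtradeoff]$, so Theorem~\ref{thm:upper} applies. Two quantities must be tracked through the discretization: $\pmax = \max_{i,j}\pgt_{i,j} \le \dmax/\dimtradeoff^2$, so $(\pmax\dimtradeoff^2)^{1/3}\le \dmax^{1/3}$; and each $\pgt_{i,j}\in[\dmin/\dimtradeoff^2,\dmax/\dimtradeoff^2]$, so $\pratio(\pgt)\le(\dmax/\dmin)^2$ and $\log\pratio(\pgt)+1\lesssim 1$. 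Take $\delta = \samples^{-4}/4$; the sample-size hypothesis $\samples \ge 12\log(\dimtradeoff^2/\delta)/\pmin$ of Theorem~\ref{thm:upper} is implied by $\pmin \ge \dmin/\dimtradeoff^2$ together with the cap $\dimtradeoff^2 \lesssim \dmin\samples/\log(\dmin\samples)$ built into the choice of $\dimtradeoff$, once $\samples$ exceeds a constant depending on $\expholder,\constholder,\dmin$ (exactly the stated threshold); moreover $\dimtradeoff \lesssim \sqrt{\samples}$, so $\log(\dimtradeoff/\delta) \lesssim \log\samples$ and $\log\dimtradeoff \lesssim \log\samples$. Plugging these into Theorem~\ref{thm:upper} yields, with probability at least $1-\samples^{-4}$,
\[ \hel^2(\bar\density,\hat\density) \lesssim \frac{\dimtradeoff\log\samples}{\samples} + \frac{(\log\samples)^{4/3}}{\samples^{2/3}}, \]
with constants depending only on $\dmin,\dmax$.

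\textbf{Combining and choosing $\dimtradeoff$.}
Adding the two bounds gives $\hel^2(\hat\density,\densitygt) \lesssim (\constholder^2/\dmin)\,\dimtradeoff^{-2\tilde\expholder} + \dimtradeoff\log\samples/\samples + (\log\samples)^{4/3}\samples^{-2/3}$. Balancing the first two terms suggests $\dimtradeoff \asymp (\constholder^2\samples/\dmin)^{1/(2\tilde\expholder+1)}$; capping this at $(\dmin\samples/\log(\dmin\samples))^{1/2}$ --- the active constraint precisely when $\tilde\expholder < 1/2$, i.e.\ $\expholder \le 0.5$ --- recovers the stated choice of $\dimtradeoff$. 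For $\expholder > 0.5$ the first branch of the minimum is active and both of the first two terms are $\lesssim (\log\samples)\,\samples^{-2\tilde\expholder/(2\tilde\expholder+1)}$; for $0 < \expholder \le 0.5$ the second branch is active, the bias term becomes $\lesssim (\log\samples/\samples)^{\tilde\expholder}$, and this dominates $\dimtradeoff\log\samples/\samples \lesssim \sqrt{\log\samples/\samples}$ since $\tilde\expholder \le 1/2$. In both cases, retaining the $\mtp$-induced term $(\log\samples)^{4/3}\samples^{-2/3}$ gives the two claimed inequalities. The genuinely hard estimate --- the discrete $\mtp$ rate --- is already supplied by Theorem~\ref{thm:upper}; the main obstacle here is the bookkeeping that makes it applicable: checking that $\pmax\dimtradeoff^2$, $\pratio(\pgt)$ and all logarithmic factors stay controlled under discretization, that the sample-size hypothesis of Theorem~\ref{thm:upper} follows from the constant threshold on $\samples$ and the cap on $\dimtradeoff$, and then coupling this with the histogram bias bound and the two-regime optimization over $\dimtradeoff$.
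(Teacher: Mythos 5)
Your proposal is correct and follows essentially the same route as the paper: the same triangle-inequality decomposition through the piecewise-constant lift $\bar\density$, the same reduction of the stochastic term to the discrete Hellinger distance so that Theorem~\ref{thm:upper} applies with $\pmax \le \dmax/\dimtradeoff^2$ and $\pratio(\pgt) \le (\dmax/\dmin)^2$, the same histogram bias bound of order $(\constholder^2/\dmin)\dimtradeoff^{-2\tilde\expholder}$ (the paper uses the mean value theorem for integrals rather than an oscillation bound, a cosmetic difference), and the same choice $\delta \asymp \samples^{-4}$ with the two-regime optimization over $\dimtradeoff$. No gaps beyond the constant-level bookkeeping the paper itself handles.
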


Note that the size of discretization $n$ can be viewed as a tuning parameter in smooth density estimation---the larger $n$ is, the smaller bias and larger variance the piecewise constant estimator has. As the proof of Theorem~\ref{thm:upper-cts} suggests, the above choice of $n$ achieves the optimal bias-variance trade-off, thereby yielding near-optimal upper bounds. 

As in the discrete setting, each of the above upper bounds contains two terms. The term involving $\tilde{\expholder}$ in the exponent originates from the smoothness of the density, while the term $\frac{ (\log \samples )^{4/3} }{ \samples^{2/3} }$ is due to the $\mtp$ shape constraint. 


More precise versions of the bounds in Theorem~\ref{thm:upper-cts} are given in \eqref{eq:ub1} and \eqref{eq:ub2} with explicit dependencies on $\dmin$, $\dmax$, and $\constholder$. 
In particular, treating these quantities as constants, the above bounds yield (up to logarithmic factors) that 
\begin{equation}
	\label{eq:gr}
	\hel^2(\hat \density, \densitygt)
	\lesssim \left\{
	\begin{aligned}
	&\samples^{- \beta}, & \quad & 0 < \expholder < 0.5, \\
		&\samples^{- \frac{2 \expholder}{2 \expholder + 1}}, &\quad &0.5 \le \expholder < 1,\\
		&\samples^{-2/3}, &\quad &\expholder \ge 1 ,
	\end{aligned}
	\right.
\end{equation}
with high probability. 
These upper bounds are complemented by the following lower bounds. 

\begin{theorem}[Lower bounds for estimation of smooth $\mtp$ distributions]
	\label{thm:lower-cts}
	Let \(  \p_{\densitygt} \) denote the probability with respect to \( \sample \) independent observations from the distribution with density \( \densitygt \in \classholder(\expholder, R) \), for \( \expholder > 0 \) and \( R \ge 1 \).
	Then there exists a universal constant $c>0$ such that 
	\begin{equation}
		\label{eq:hv}
		\inf_{\tilde \density} \sup_{\densitygt \in \classholder(\expholder, R)} \p_{\densitygt}\left( \hel^2(\tilde \density, \densitygt) \ge c \phi_\expholder(\sample) \right) \ge \frac{1}{3},
	\end{equation}
	where the infimum is taken over all estimators \( \tilde \density \) measurable with respect to the observations and 
\begin{equation}
\label{eq:hx}
\phi_{\expholder}(\sample)
= \left\{
\begin{aligned}
&\sample^{-\frac{2 \expholder}{2 \expholder + 1}}, & \quad & \text{if } 0 < \expholder < 1,\\
&\sample^{-2/3}, & \quad & \text{if } 1 \le \expholder < 2,\\
&\sample^{-\frac{2 \expholder}{2 \expholder + 2}}, & \quad & \text{if } \expholder \ge 2.
\end{aligned}
\right.
\end{equation}
\end{theorem}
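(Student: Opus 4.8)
The plan is to establish the three regimes of $\phi_\expholder(\sample)$ by a standard reduction to hypothesis testing (Fano's method / Assouad's lemma), where the crux is to exhibit a rich family of $\mtp$, $\expholder$-H\"older densities that are simultaneously well-separated in Hellinger distance and statistically indistinguishable. Since $\classholder(\expholder, R)$ contains densities that are \emph{not} $\mtp$, the ordinary minimax lower bounds for the H\"older class would apply directly, but here we want lower bounds \emph{over the $\mtp$ subclass}; hence every perturbed density in the construction must be verified to satisfy \eqref{eq:tp}. The base density will be taken to be a suitable smooth $\mtp$ density bounded away from $0$ and $\infty$ on $[0,1]^2$ --- e.g.\ a product density or a mild exponential tilt of the uniform --- and perturbations will be added via localized bump functions whose supports tile the square.

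\textbf{Regime $0<\expholder<1$ (and $1\le\expholder<2$ partially).} For the ``one-dimensional'' rate $\sample^{-2\expholder/(2\expholder+1)}$ I would use perturbations that vary in only \emph{one} coordinate: write $\density_\omega(x_1,x_2) = \density_0(x_1,x_2)\big(1 + \sum_{\ell} \omega_\ell \, g_\ell(x_1)\big)$ where $g_\ell$ is a $C^\infty$ bump of width $h$ and height $\asymp h^\expholder$ supported on the $\ell$-th of $\sim 1/h$ disjoint intervals, and $\omega \in \{-1,+1\}^{1/h}$. Because these perturbations depend on $x_1$ alone, the $2\times2$ $\mtp$ determinant condition $\partial_1\partial_2 \log \density \ge 0$ is \emph{unaffected} by them up to the cross term, which vanishes; one only needs $\density_0$ itself to be strictly $\mtp$ with enough slack that the multiplicative factor $(1+o(1))$ preserves the inequality, and the H\"older norm is controlled by choosing the height $\asymp R h^\expholder$. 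Assouad's lemma with $\KL(\density_\omega, \density_{\omega'}) \lesssim \sample \sum_\ell (\omega_\ell-\omega'_\ell)^2 h^{2\expholder+1}$ and per-coordinate Hellinger separation $\asymp h^{2\expholder+1}$ then yields $h \asymp \sample^{-1/(2\expholder+1)}$ and the rate $h^{2\expholder} \cdot (1/h) \cdot h = \sample^{-2\expholder/(2\expholder+1)}$ --- wait, more carefully the aggregated squared Hellinger separation is $(1/h)\cdot h^{2\expholder+1} = h^{2\expholder}$, giving $\sample^{-2\expholder/(2\expholder+1)}$ as claimed.

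\textbf{Regime $\expholder\ge 2$.} Here the $\mtp$ constraint is vacuous at the minimax level, so I would simply invoke (or reprove by the same Assouad argument with genuinely two-dimensional bumps $g_\ell(x_1,x_2)$ of width $h$ in \emph{both} coordinates, again multiplicatively tilting a strictly $\mtp$ base density) the classical $2$D H\"older rate $\sample^{-2\expholder/(2\expholder+2)}$; the number of bumps is now $\asymp h^{-2}$, each contributing Hellinger-squared $\asymp h^{2\expholder+2}$ and $\KL$ contribution $\asymp \sample h^{2\expholder+2}$ per bump, so $h \asymp \sample^{-1/(2\expholder+2)}$ and the aggregate separation is $h^{-2}\cdot h^{2\expholder+2} = h^{2\expholder}$, i.e.\ $\sample^{-2\expholder/(2\expholder+2)}$.

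\textbf{Regime $1\le\expholder<2$: the plateau $\sample^{-2/3}$.} This is the delicate case and the main obstacle: one must show that even very smooth ($\expholder$ up to $2$) $\mtp$ densities cannot be estimated faster than $\sample^{-2/3}$, which matches the discrete lower bound of Theorem~\ref{thm:lower}. The natural route is to transfer the discrete construction: take the hard PMF family on an $m\times m$ grid from the proof of Theorem~\ref{thm:lower} (which delivers the $\sample^{-2/3}$ term via rank-type perturbations obeying the $2\times2$ minor inequalities), and \emph{smooth} it by convolving each atom with a fixed $C^\infty$ mollifier of width $\asymp 1/m$ scaled so the resulting density lies in $\classholder(\expholder, R)$ for all $\expholder < 2$ --- this requires the perturbation amplitudes to scale like $m^{-\expholder}$, and one checks that the $\mtp$ inequality, being preserved under the specific (lattice-monotone) convolution and under small multiplicative perturbations of a strictly $\mtp$ base, survives. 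Balancing $m$ against $\sample$ (with $m\asymp \sample^{1/3}$) reproduces the $\sample^{-2/3}$ separation. The technical heart is verifying total positivity of the smoothed perturbed densities: convolution with an arbitrary kernel does \emph{not} preserve $\mtp$ in general, so I would either use a kernel that is itself $\mtp$ (a Gaussian, by the variation-diminishing property, or a product of log-concave kernels) so that $\mtp \ast \mtp$ stays $\mtp$ by the basic composition formula for totally positive functions, or argue directly that the $\partial_1\partial_2\log\density \ge 0$ inequality holds with slack because the base density is strictly $\mtp$ and the perturbation is $O(m^{-\expholder})$ in $C^2$. Getting this slack estimate uniform over the exponentially many hypotheses is where the real work lies; once it is in place, Fano's inequality with the packing number $2^{\Theta(m^2)}$ and pairwise $\KL \lesssim \sample\, m^{-2\expholder}\cdot m^{-2} \cdot m^2 = \sample\, m^{-2\expholder}$ closes the argument. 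Finally, since all three constructions live inside $\classholder(\expholder,R)$ (indeed inside the $\mtp$ subclass), taking the maximum of the three bounds gives $\phi_\expholder(\sample)$ as stated.
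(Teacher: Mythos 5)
Your sketch of the two extreme regimes is essentially the paper's argument. For $0 < \expholder < 1$ the paper takes $\density^{(\ell)}(x,y) = 1 + \sum_j \tau^{(\ell)}_j g_j(x)$ with bumps depending on $x$ alone, so that $\partial_1\partial_2\log\density^{(\ell)} \equiv 0$ and $\mtp$ is free --- the same observation you make, minus the multiplicative base density (the uniform suffices); the paper invokes Fano via a Gilbert--Varshamov packing rather than Assouad, which is cosmetic. For $\expholder \ge 2$ the paper indeed perturbs a strictly $\mtp$ base log-density $z_1 z_2$ by two-dimensional bumps and checks $\partial_1\partial_2\eta^{(\ell)} \ge 1 - C\delta k^{2-\expholder} > 0$, which is exactly your ``multiplicative tilt of a strictly $\mtp$ base''; the one nontrivial device you elide is the cell-dependent scales $\tilde\delta_{i,j}$ chosen so that the normalization constant $\mathfrak{N}$ is identical across all hypotheses, which keeps the Hellinger and KL calculations clean.

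The genuine gap is the regime $1 < \expholder < 2$. You identify it correctly as the crux, then propose mollifying the discrete construction of Theorem~\ref{thm:lower} and immediately run into the difficulties you yourself flag: convolution with a generic kernel does not preserve $\mtp$, and controlling the slack in $\partial_1\partial_2\log\density \ge 0$ uniformly over $2^{\Theta(m^2)}$ hypotheses after smoothing is not addressed. None of this machinery is necessary. The paper's entire argument for $1 < \expholder < 2$ is the one-line inclusion $\classholder(2,R) \subseteq \classholder(\expholder, R)$: since \( \lceil \expholder - 1 \rceil = 1 \) on this whole range, and on the bounded set $[0,1]^2$ a Lipschitz gradient is automatically $(\expholder-1)$-H\"older, the \emph{same} family of hypotheses built at $\expholder = 2$ already lies in $\classholder(\expholder, R)$ and already delivers the $N^{-2/3}$ separation. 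So your proposal, as written, leaves the $1 < \expholder < 2$ case open, and the missing ingredient is precisely this nestedness of H\"older balls, which renders the convolution route unnecessary.
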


The lower bounds above match the upper bounds up to logarithmic factors in the regime $0.5 \le \expholder \le 2$. 
For $0 < \expholder < 0.5$, the rate \( N^{-\beta} \) coincides with the rate obtained by the nonparametric H\"older-constrained estimator in dimension one \cite{BirMas93}.
This slow rate is known to be suboptimal, as sieve estimators attain the optimal rate \( N^{-2 \beta/(2 \beta + 1)} \).
While we conjecture that up to log factors, the latter should also be the optimal rate in our case, we leave the problem of finding the optimal rate in this regime as an open question for future research.
For $\expholder > 2$, the rate is the same as that for $\expholder$-H\"older smooth density estimation without the $\mtp$ assumption. 
Therefore, while the lower bound is interesting in our setup, to obtain the upper bound, it suffices to use any existing rate-optimal estimator.

\begin{remark}
	The construction of the estimator \( \hat \density \) depends both on the smoothness parameter \( \expholder \) and the H\"older constant \( \constholder \), and it does not match the lower bounds in the case \( \expholder > 2 \).
	Both of these shortcomings can be remedied by considering an ensemble of estimators that include both our estimators \( \hat \density \) for varying parameters \( \tilde \expholder \) and \( \tilde \constholder \) over a discretization of the set of parameters, and, for example, regular kernel density estimators which are rate-optimal for H\"older smooth density estimation. 
	Over such an ensemble, either selection \cite{Mas07} or aggregation procedures \cite{JudRigTsy08} can be used to achieve adaptive rates that match the lower bounds up to logarithmic factors. 
	Since the techniques are standard and yield no new phenomenon, we do not pursue this direction in the current work. 
\end{remark}


\section{Efficient algorithms}
\label{sec:algo}

The optimization problem for finding the constrained MLE in \eqref{eq:est-1} is a convex problem with a polynomial number of constraints and can thus be solved in polynomial time with a general purpose solver for convex problems such as SCS \cite{DonChuPari16, scs} or ECOS \cite{DomChuBoy13}.
However, since the number of constraints is of the order \( \dimone \dimtwo \), solving the linear systems in each iteration step of these solvers can take a long time without specialized solvers.
We address this issue by employing a proximal Newton method, whose main step consists in a projection onto the set of constraints, which in turn can be solved by a variant of Dykstra's algorithm, as discussed in \cite{HutMaoRigRob19}.
In this section, in order to emphasize the connection to computing projections, we think about \eqref{eq:est-1} as a minimization problem instead of a maximization problem by changing the sign in front of the objective.

First, we derive the outer iteration of our algorithm as a proximal Newton method.
These methods are intended to solve nonlinear optimization problems by successively solving local quadratic approximations to the objective functions.
For a more thorough introduction to this class of methods, see \cite{LeeSunSau14a}.
Briefly, for \( d \in \mathbb{N} \), to minimize a composite function of the form
\begin{equation}
	\label{eq:iv}
	\min_{\theta \in \R^d} f(\theta), \quad f(\theta) = g(\theta) + h(\theta),
\end{equation}
one starts with an initialization \( x^{(0)} = x_0 \in \R^d \) and computes updates by solving
\begin{align}
	\label{eq:ik}
	\rho^{(k)} = {} & \argmin_{\tilde \rho \in \R^d} \nabla g(\theta^{(k-1)})^\top (\tilde \rho - \theta^{(k-1)}) + \frac{1}{2}(\tilde \rho - \theta^{(k-1)})^\top \nabla^2 g(\theta^{(k-1)})(\tilde \rho - \theta^{(k-1)}) + h(\tilde \rho)\\
	\theta^{(k)} = {} & \theta^{(k-1)} + t_k (\eta^{(k)} - \theta^{(k-1)}),
\end{align}
where \( t_k \) is usually chosen by a line-search technique.
In the case of \eqref{eq:est-1}, we set
\begin{align}
	\label{eq:iw}
	g(\theta) = {} & -\frac{1}{\samples} \langle Y , \theta \rangle + \sum_{i, j} e^{\theta_{i,j}}\\
	h(\theta) = {} & 
	\left\{
	\begin{aligned}
		& 0, & \quad & \theta \in \cM \cap \cC(Y),\\
		& +\infty, & \quad & \theta \notin \cM \cap \cC(Y),
	\end{aligned}
	\right.
\end{align}
where \( \cC(Y) \) corresponds to the box constraints defined in \eqref{eq:constraint}, and
\begin{align}
	\label{eq:ix}
	\cM := \{ \theta \in \R^{\dimone \times \dimtwo} : \Done \theta \Dtwo^\top \ge 0 \}.
\end{align}

Further computation then shows that the Hessian \( \nabla^2 g(\theta) \) has the structure of a diagonal operator, which makes the subproblem of computing \( \rho^{(k)} \) equivalent to finding a projection with respect to a weighted Frobenius norm.
Namely, computing the first and second derivatives yields
\begin{align}
	\label{eq:iy}
	(\nabla g(\theta))_{i_1, i_2} = {} & -\frac{1}{\samples} Y_{i,j} + \exp(\theta_{i,j}) , \\
	(\nabla^2 g(\theta))_{(i_1, i_2), (j_1, j_2)} = {} & 
	\left\{
	\begin{aligned}
		& \exp(\theta_{i_1, i_2}), & \quad & (i_1, i_2) = (j_1, j_2), \\
		& 0, & \quad & \text{otherwise}.
	\end{aligned}
	\right.
\end{align}
Hence, writing
\begin{equation}
	\label{eq:iz}
	\Lambda_{i_1, i_2} = \exp(\theta^{(k)}_{i_1, i_2}),
\end{equation}
computing \( \rho^{(k)} \) is equivalent to
\begin{align}
	\label{eq:ja}
	\rho^{(k)}
	= {} & \argmin_{\tilde \rho \in \cM \cap \cC(Y)} \Big\langle -\frac{1}{\samples} Y + \Lambda, \tilde \rho \Big\rangle + \frac{1}{2} \Big\| \tilde \rho - \theta^{(k-1)} \Big\|_{\Lambda}^2\\
	= {} & \argmin_{\tilde \rho \in \cM \cap \cC(Y)} \frac{1}{2} \Big\| \tilde \rho - \Big( \theta^{(k-1)} + \frac{1}{\samples} Y \oslash \Lambda - \1 \Big) \Big\|_{\Lambda}^2 
	+ \Big\langle \theta^{(k-1)} , \Lambda - \frac{1}{\samples} Y \Big\rangle - \frac{1}{2} \Big\| \Lambda - \frac{1}{\samples} Y \Big\|_{1 \oslash \Lambda}^2\\
	= {} & \argmin_{\tilde \rho \in \cM \cap \cC(Y)} \frac{1}{2} \Big\| \tilde \rho - \Big( \theta^{(k-1)} + \frac{1}{\samples} Y \oslash \Lambda - \1 \Big) \Big\|_{\Lambda}^2,
	\label{eq:jh}
\end{align}
the projection of \( \theta^{(k-1)} + \frac{1}{\samples} Y \oslash \Lambda - \1 \) onto \( \cM \cap \cC(Y) \) with respect to the Frobenius norm weighted by \( \Lambda \).

Second, problem \eqref{eq:jh} can be efficiently solved by a variant of Dykstra’s algorithm, as shown in \cite{HutMaoRigRob19}.
The idea is to split up the projection onto \( \cM \cap \cC(Y) \) into the projection onto \( \cC(Y) \) and the sets
\begin{equation}
	\label{eq:jb}
	\cM_{i_1, i_2} = \Big\{ \theta \in \R^{\dimone \times \dimtwo} : \sum_{j_1 \in \{0,1\}, j_2 \in \{0,1\}} (-1)^{j_1 + j_2} \theta_{i_1 + j_1, i_2 + j_2} \ge 0 \Big\}, \quad i_1 \in [\dimone - 1], i_2 \in [\dimtwo - 1],
\end{equation}
where additional correction terms are applied to the vectors to ensure convergence.
The basic Dysktra algorithm for projecting a vector \( y \in \R^d \) onto a general collection of sets \( \cM_1, \dots, \cM_m \) is listed as Algorithm~\ref{alg:dykstra}.

\begin{algorithm}[H]
\normalsize
	\caption{{Dykstra algorithm}}
	\label{alg:dykstra}
	\begin{algorithmic}
		\Input \( y \in \R^d \)
		\Output \( \theta \approx \Pi_{\cM}(y) \)
		\Function{ProjectDykstra}{y}
		\For{\( i = 1, \dots, m \)}
			\State{\( p_i = 0_d \)}
			\Comment Initialize residuals
		\EndFor
		\State{\( \theta_m = y \)}
		\Comment Initialize iterates
		\While{not converged}
		\For{\( i = 1, \dots, m \)}
		\State \( \theta_i \gets \Pi_{\cM_i}(\theta_{(i - 2) \% m + 1} + p_i) \)
		\Comment Project shifted iterates
		\State \( p_i \gets \theta_{(i - 2) \% m + 1} + p_i - \theta_i \)
		\Comment Compute new residual
		\EndFor
		\EndWhile
		\State \Return \( \theta \)
		\EndFunction
	\end{algorithmic}
\end{algorithm}

In our case, the projection onto \( \cM_{i_1, i_2} \) with weight matrix \( \Lambda \), written as \( \Pi_{\cM_{i_1, i_2}, \Lambda} \), has the following closed form solution.
For \( i_1 \in [\dimone - 1], i_2 \in [\dimtwo - 1]\), let \( \Lambda \) be given is in \eqref{eq:iz} and set
\begin{equation}
	\label{eq:je}
	\Gamma_{i_1, i_2} = \left(\sum_{j_1, j_2 \in \{0, 1\}} \frac{1}{\Lambda_{i_1 + j_1, i_2 + j_2}}\right)^{-1}.
\end{equation}
Then, we have for \( j_1, j_2 \in \{0,1\} \) that
\begin{align}
	\label{eq:jc}
	\leadeq{(\Pi_{\cM_{i_1, i_2}, \Lambda} y)_{i_1 + j_1, i_2 + j_2} }\\
	= {} & Z_{i_1 + j_1, i_2 + j_2}
	+ \frac{(-1)^{j_1 + j_2}}{\Lambda_{i_1 + j_1, i_2 + j_2}} \max \left\{ -\Gamma_{i_1, i_2}\sum_{k_1 \in \{0,1\}, k_2 \in \{0,1\}} (-1)^{k_1 + k_2} Z_{i_1 + k_1, i_2 + k_2}, 0 \right\},
\end{align}
and for \( (l_1, l_2) \notin (i_1 + \{0, 1\}) \times (i_2 + \{0, 1\})  \) that
\begin{equation}
	\label{eq:jd}
	(\Pi_{\cM_{i_1, i_2}, \Lambda} y)_{l_1, l_2} =  Z_{l_1, l_2}.
\end{equation}
Together with the closed form solution for projecting onto the box \( \cC(Y) \),
\begin{equation}
	\label{eq:jf}
	(\Pi_{\cC(Y)}(y)_{i_1, i_2}) =
	\left\{
	\begin{aligned}
		& Z_{i_1, i_2}, & \quad & Z_{i_1, i_2} \in \left[\log \frac{2 Y_{i,j}}{3 \samples}, \log \frac{2 Y_{i_1,i_2}}{\samples}\right] \\
		& \log \frac{2 Y_{i_1, i_2}}{\samples}, & \quad & Z_{i_1, i_2} > \log \frac{2 Y_{i_1, i_2}}{\samples},\\
		& \log \frac{2 Y_{i_1, i_2}}{3 \samples}, & \quad & Z_{i_1, i_2} < \log \frac{2 Y_{i_1, i_2}}{3 \samples},
	\end{aligned}
	\right.
\end{equation}
we end up with the iterative projection algorithm given in Algorithm \ref{alg:fast-proj}, which in turn leads to the proximal Newton method, Algorithm \ref{alg:prox-newton}.
Note that we did not implement a line search but instead chose to directly update our iterates with \( \rho^{(k)} \), which seems to not pose any problems in practice.

Similarly, if instead of the box-constrained estimator \eqref{eq:est-2}, we are interested in calculating the regular MLE over \( \mtp \), \eqref{eq:mle-1}, we can omit the projection onto \( \cC(Y) \) in Algorithm~\ref{alg:fast-proj}.

\begin{algorithm}[H]
\normalsize
	\caption{{Fast projection onto \( \cM \)}}
	\label{alg:fast-proj}
	\begin{algorithmic}
		\Function{Project}{$ y, \Lambda, \cC(Y) $}
		\State \( \theta \gets y, \quad \eta \gets 0 \in \R^{(\dimone - 1) \times (\dimtwo - 1)}, \quad \eta' \gets 0 \in \R^{\dimone \times \dimtwo} \)
		\Comment Initialize \( \theta \) and residuals \( \eta, \eta' \)
		\For{\( i_1 = 1, \dots, \dimone - 1, \, i_2 = 1, \dots, \dimtwo-1 \)}
			\State \( \Gamma_{i_1, i_2} \gets \left(\sum_{j_1, j_2 \in \{0, 1\}} (\Lambda_{i_1 + j_1, i_2 + j_2})^{-1} \right)^{-1} \)
			\Comment Initialize harmonic mean of weights
		\EndFor
		\While{not converged}
		\State \( \theta' \gets \Pi_{\cC(Y)}(\theta + \eta')\)
		\Comment Project onto \( \cC(Y) \) \ldots
		\State \( \eta' \gets \theta + \eta' - \theta', \quad \theta \gets \theta' \)
		\Comment \ldots and store corresponding residual
		\For{\( i_1 = 1, \dots, \dimone - 1,\, i_2 = 1, \dots, \dimtwo - 1 \)}
			\Comment Project onto \( \cM \) by projecting onto all \( \cM_{i_1, i_2} \) in turn
			\State \( \tilde \eta \gets \max \left\{ \eta_{i_1, i_2} -\Gamma_{i_1, i_2}\sum_{k_1 \in \{0,1\}, k_2 \in \{0,1\}} (-1)^{k_1 + k_2} \theta_{i_1 + k_1, i_2 + k_2}, 0 \right\} \)
			\For{\( j_1 \in \{0, 1\}, \, j_2 \in \{0, 1\} \)}
			\State \( \theta_{i_1 + j_1, i_2 + j_2} \gets \theta_{i_1 + j_1, i_2 + j_2} + (-1)^{j_1 + j_2} \Lambda_{i_1 + j_1, i_2 + j_2}^{-1} (\tilde \eta - \eta_{i_1, i_2}) \)
			\EndFor
			\State \( \eta_{i_1, i_2} \gets \tilde \eta \)
		\EndFor
		\EndWhile
		\State \Return \( \theta \)
		\EndFunction
	\end{algorithmic}
\end{algorithm}

\begin{algorithm}[ht]
\normalsize
	\caption{Restricted ML solution via proximal Newton method}
	\label{alg:prox-newton}
	\begin{algorithmic}
		\Function{RestrictedMaximumLikelihood}{$Y$}
		\State \( \theta \gets Y/\samples \)
		\While{not converged}
		\For{\( i_1 = 1, \dots, \dimone, i_2 = 1, \dots, \dimtwo \)}
		\State \( \Lambda_{i_1, i_2} \gets \exp(\theta_{i, j}) \)
		\Comment Update weight matrix
		\EndFor
		\State \( y \gets \theta + Y/\samples \oslash \Lambda - \1 \)
		\State \( \theta \gets \Call{Project}{y, \Lambda, \cC(Y)} \)
		\Comment perform Newton step
		\EndWhile
		\State \Return \( \theta \).
		\EndFunction
	\end{algorithmic}
\end{algorithm}

In practice, convergence in Algorithm \ref{alg:fast-proj} can be checked by computing a measure of feasibility such as \( 0 \lor \max \{ -(\Done \theta \Dtwo)_{i,j} : i \in [\dimone - 1], j \in [\dimtwo - 1] \} \) or stopping when the distance between successive iterates becomes small.
Similarly, we stop the proximal Newton method, Algorithm \ref{alg:prox-newton}, when two successive iterates become very close to each other or the gain in the objective function is very small.

Following these considerations, computing the density estimator \eqref{eq:gd} is straightforward by computing a histogram of the samples in \( [0,1]^2 \) and applying Algorithm \ref{alg:prox-newton} to obtain \( \hat p \), which yields the piecewise constant approximation in \eqref{eq:gd}.

\section{Numerical experiments}
\label{sec:numerics}

This section is devoted to simulations which corroborate our theoretical findings. 
Further details on the underlying implementation can be found in Appendix \ref{sec:num-extended}.

\subsection{Experiments for the grid estimator}

In this section, we set \( n = \dimone = \dimtwo \) for simplicity.

We consider the following family of ground truth probability mass functions:
Let \( n \in \mathbb{N} \), set
\begin{equation}
	\label{eq:gl}
	\tilde \theta_{i, j} = 1 + \log(L) \frac{(i-1)(j-1)}{(n-1)^2}, \quad i, j \in [n],
\end{equation}
for \( L > 1 \) that can be varied and
\begin{equation}
	\label{eq:ir}
	p^\ast_{i, j} = \frac{\exp(\tilde \theta_{i, j})}{\sum_{i, j} \exp(\tilde \theta_{i,j})}, \quad i, j \in [n].
\end{equation}
By construction, \( \tilde \theta \) is supermodular, so \( p^\ast \) is \( \mtp \), and \( \log(L(p^\ast)) = \log(L) \).
We sample \( N \) \iid observations \( \{Z_k\}_{k=1}^N \) from \( p \) and form the matrix \( Y \) as in \eqref{eq:jy}.
We consider three estimators for \( p^\ast \): the empirical frequency matrix \( Y/N \), the MLE given by \eqref{eq:mle-1}, and the box-constrained estimator in \eqref{eq:est-2}.
The latter two estimators are computed by variants of Algorithm \ref{alg:prox-newton}.
Note that in cases where some of the entries in the frequency matrix are zero, we cannot take logarithms and thus report \( Y/N \) as the output of the box-constrained estimator, while the unconstrained MLE can be calculated as in Section~\ref{sec:algo} above.
For the unconstrained MLE, we do observe numerical instabilities when the number of observations is very low, eventually leading to underflows in the calculation.
This can be remedied by imposing mild lower bounds on the resulting density, see Appendix~\ref{sec:num-instability}.

\begin{figure}[ht]
	\centering
	\begin{subfigure}{0.45\textwidth}
		\includegraphics[width=\textwidth]{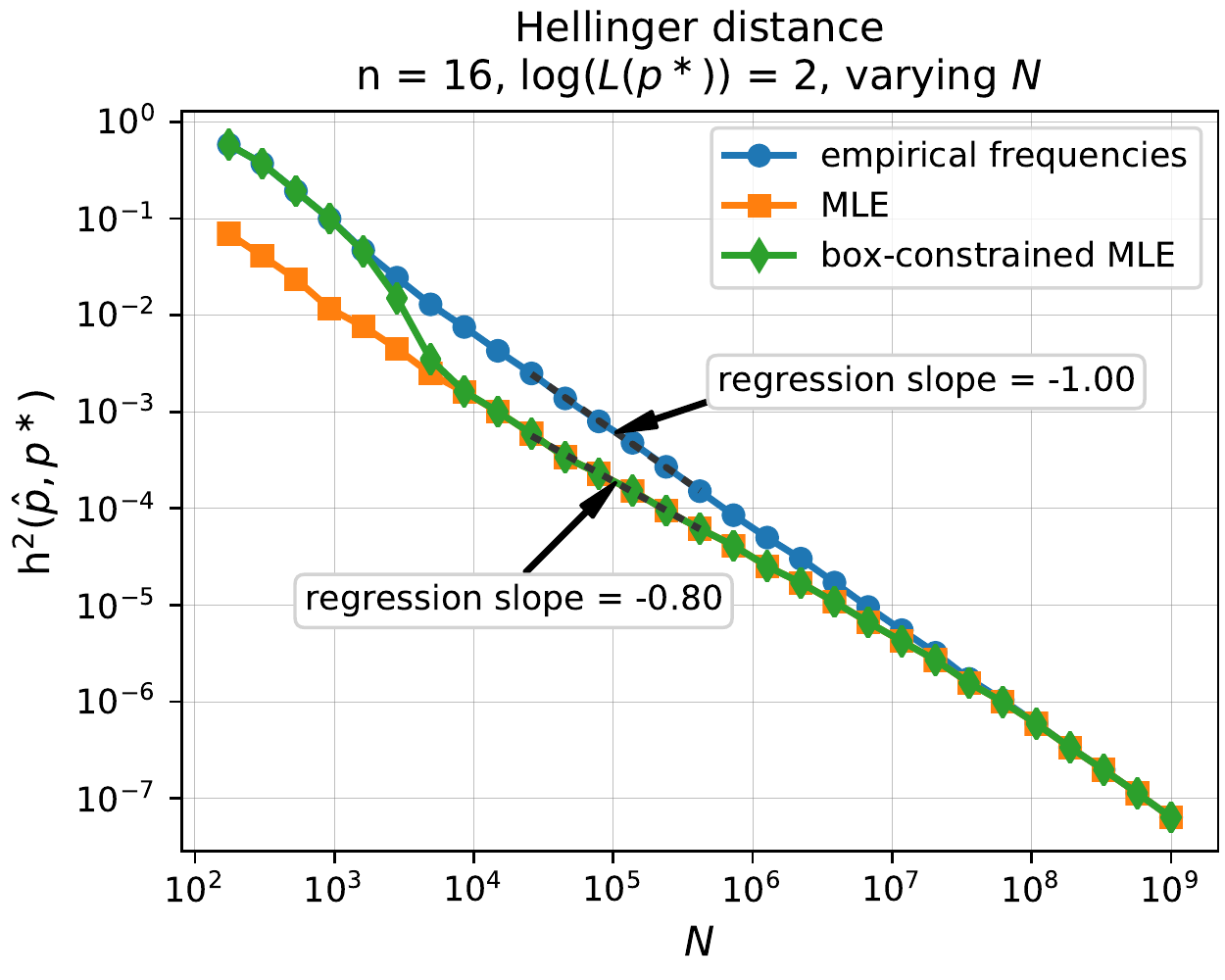}
		\caption{Varying \( N \), \( \log(L(p^\ast)) = 2 \)}
		\label{fig:pmf_large_V_var_N}
	\end{subfigure}
	\hspace{1em}
	\begin{subfigure}{0.45\textwidth}
		\includegraphics[width=\textwidth]{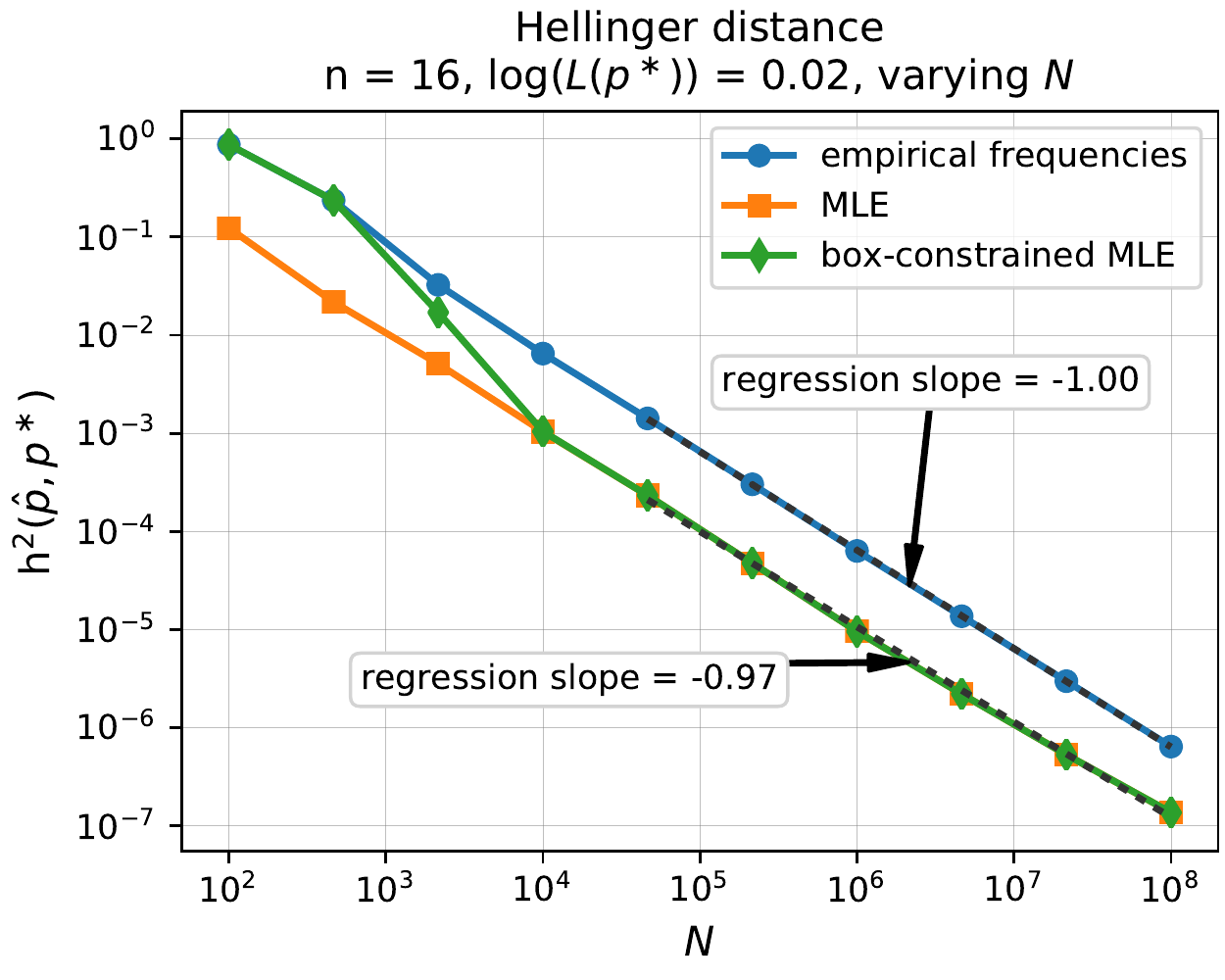}
		\caption{Varying \( N \), \( \log(L(p^\ast)) = 0.02 \)}
		\label{fig:pmf_small_V_var_N}
	\end{subfigure}
	\begin{subfigure}{0.45\textwidth}
		\includegraphics[width=\textwidth]{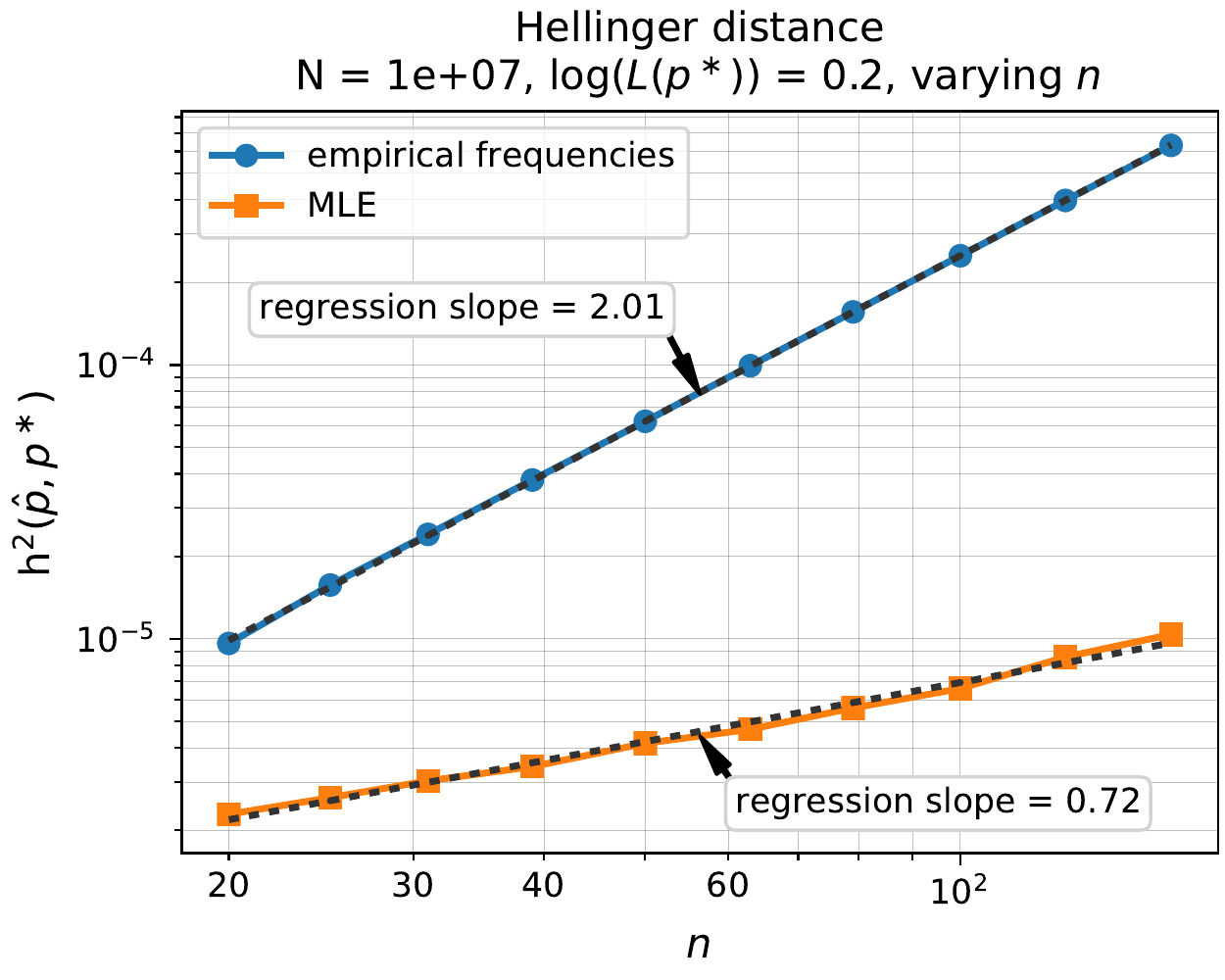}
		\caption{Varying \( n \), \( \log(L(p^\ast)) = 0.2 \)}
		\label{fig:pmf_medium_V_var_n}
	\end{subfigure}
	\hspace{1em}
	\begin{subfigure}{0.45\textwidth}
		\includegraphics[width=\textwidth]{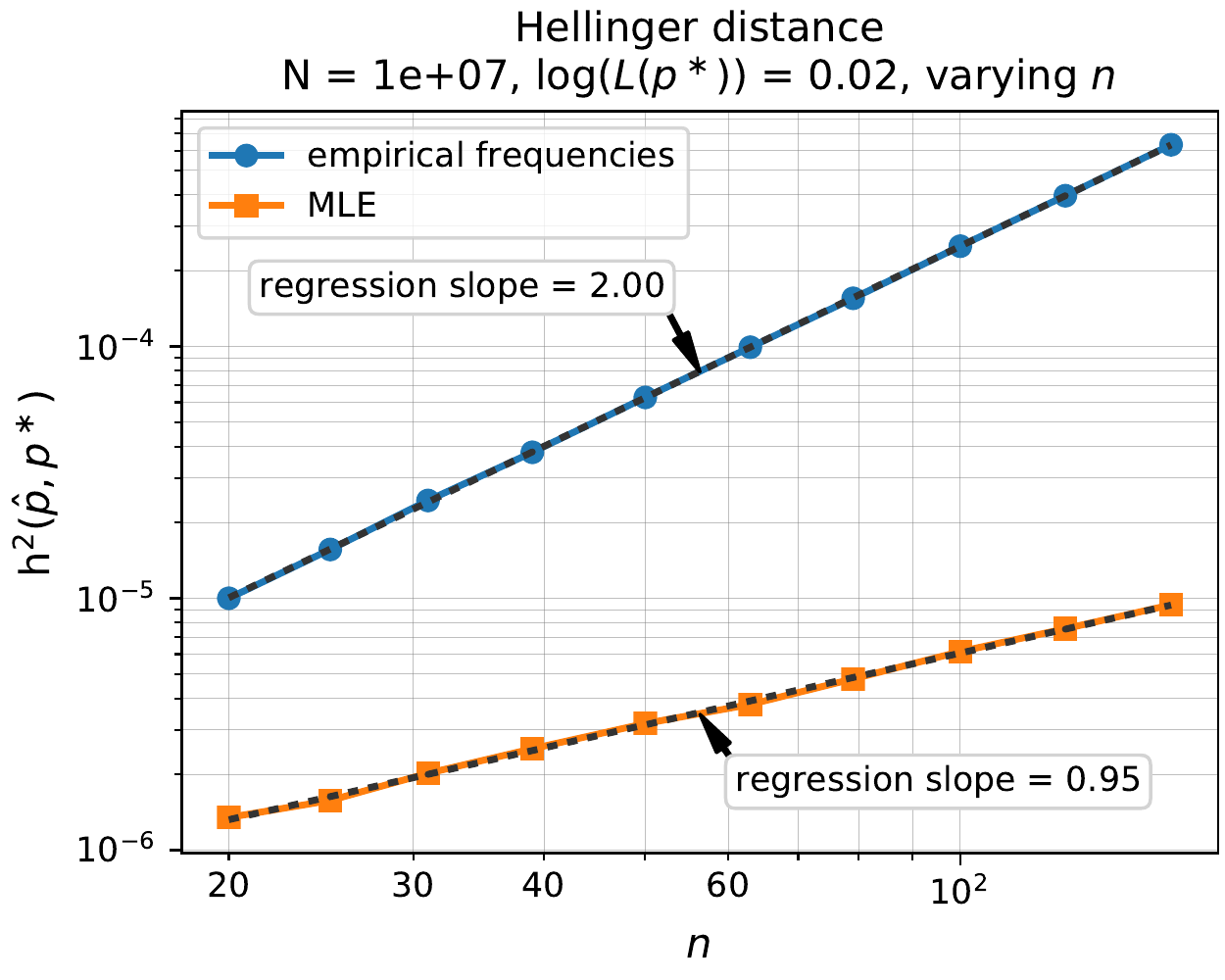}
		\caption{Varying \( n \), \( \log(L(p^\ast)) = 0.02 \)}
		\label{fig:pmf_small_V_var_n}
	\end{subfigure}
	\caption{Estimation of a density on a grid}
	\label{fig:pmf}
\end{figure}

In Figure \ref{fig:pmf}, we plot the squared Hellinger distance \( \hel^2(p^\ast, \hat p) \) for the three estimators in four setups, averaged over 20 independent replicates. More specifically, we report the results of linearly regressing the logarithms of the distances on the logarithms of the varying parameters over one or more manually selected ranges, corresponding to an estimate of the polynomial dependence on the parameter in question.

In Figures \ref{fig:pmf_large_V_var_N} and \ref{fig:pmf_small_V_var_N}, we vary the sample size \( N \) and keep \( n = 16 \) fixed for \( \log(L(p^\ast)) \in \{ 2, 0.02\} \), respectively, while in Figures \ref{fig:pmf_medium_V_var_n} and \ref{fig:pmf_small_V_var_n}, we vary the grid size \( n \) and keep \( N = \) 10,000,000 fixed for \( \log(L(p^\ast)) \in \{0.2, 0.02\} \), respectively.
We observe that all three estimators achieve an \( N^{-1} \) asymptotic rate in Figure \ref{fig:pmf_large_V_var_N}.
Moreover, the box-constrained estimator \eqref{eq:est-2} and the regular MLE \eqref{eq:mle-1} show very similar performance:
For small \( N \), the probability for zero entries in \( Y/N \) is high and thus \( Y/N \) is used instead of the box-constrained estimator as explained in the above paragraph, which explains that its performance coincides with that of the empirical frequency matrix in this regime, while the MLE performs better because the dominant factor in this regime is \( n/N \).
For an intermediate regime of \( N \), the estimation performance of the MLE and the box-constraint estimator is consistently better than that of \( Y/N \), but it is attenuated once the \( (\log(L(p^\ast)) \, n/N)^{2/3} \) rate becomes active, which can be seen in Figure \ref{fig:pmf_large_V_var_N}.
On the other hand, in Figure \ref{fig:pmf_small_V_var_N}, \( N \) is not large enough relative to \( \log(L(p^\ast)) \) to capture this regime.
Finally, for very large values of \( N \), the performance of all three estimators coincides, which for the box-constraint estimator matches the proof of the upper bound (up to logarithmic factors), see Remark~\ref{rem:ml-vs-emp}.

A similar behavior can be seen in Figures \ref{fig:pmf_medium_V_var_n} and \ref{fig:pmf_small_V_var_n}, where the performance of the frequency matrix scales with \( n^2 \), while the regular MLE scales approximately like \( n^{2/3} \) (regression coefficient of 0.72) for a larger value of \( \log(L(p^\ast)) \) and like \( n \) (regression coefficient of 0.95) when \( \log(L(p^\ast)) \) is small.
Note that the performance of the box-constrained estimator is not plotted here since it mostly coincides with that of the regular MLE.

\begin{figure}[H]
	\centering
	\begin{subfigure}{0.45\textwidth}
		\includegraphics[width=\textwidth]{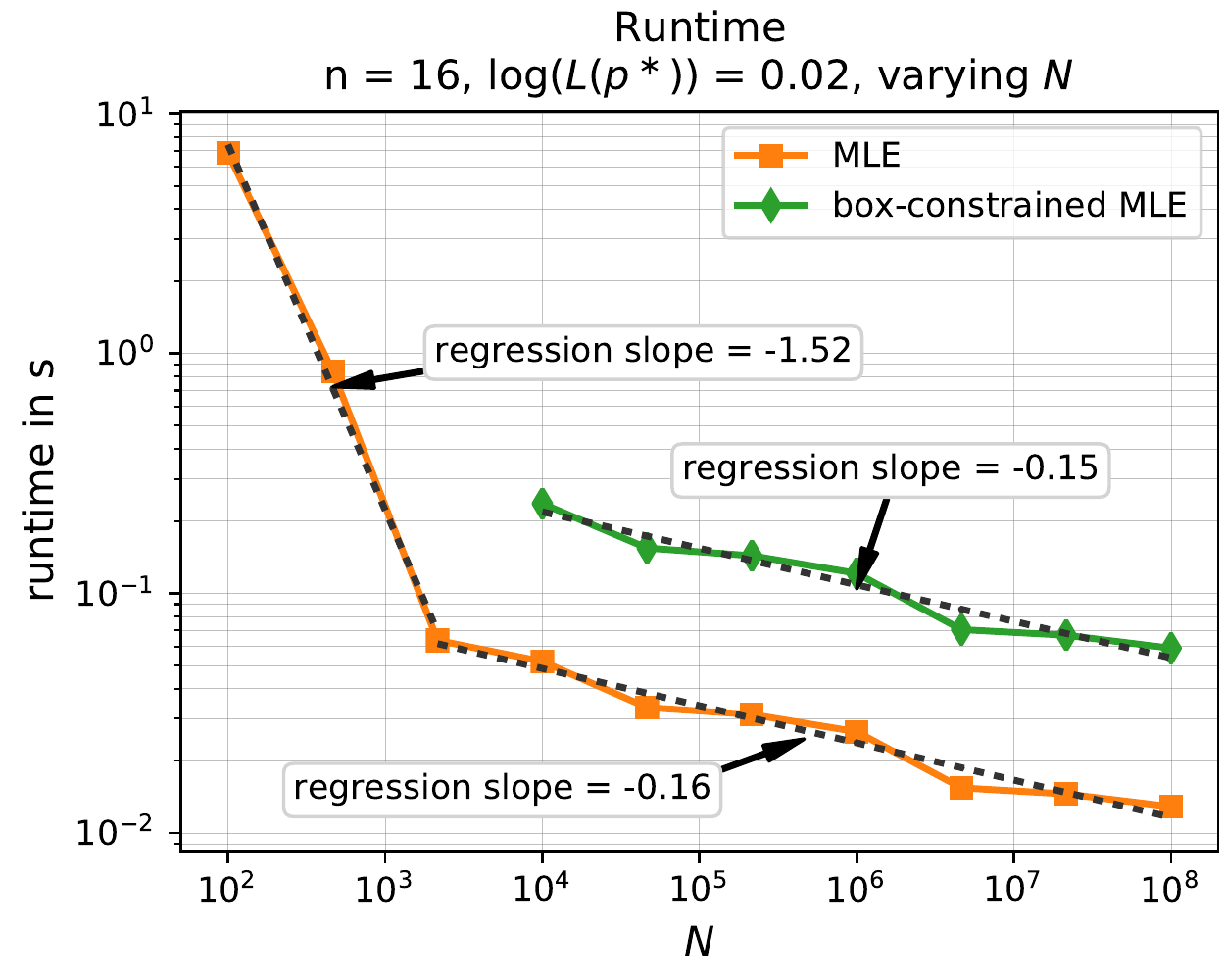}
		\caption{Varying \( N \)}
		\label{fig:pmf_var_N_runtimes}
	\end{subfigure}
	\hspace{1em}
	\begin{subfigure}{0.45\textwidth}
		\includegraphics[width=\textwidth]{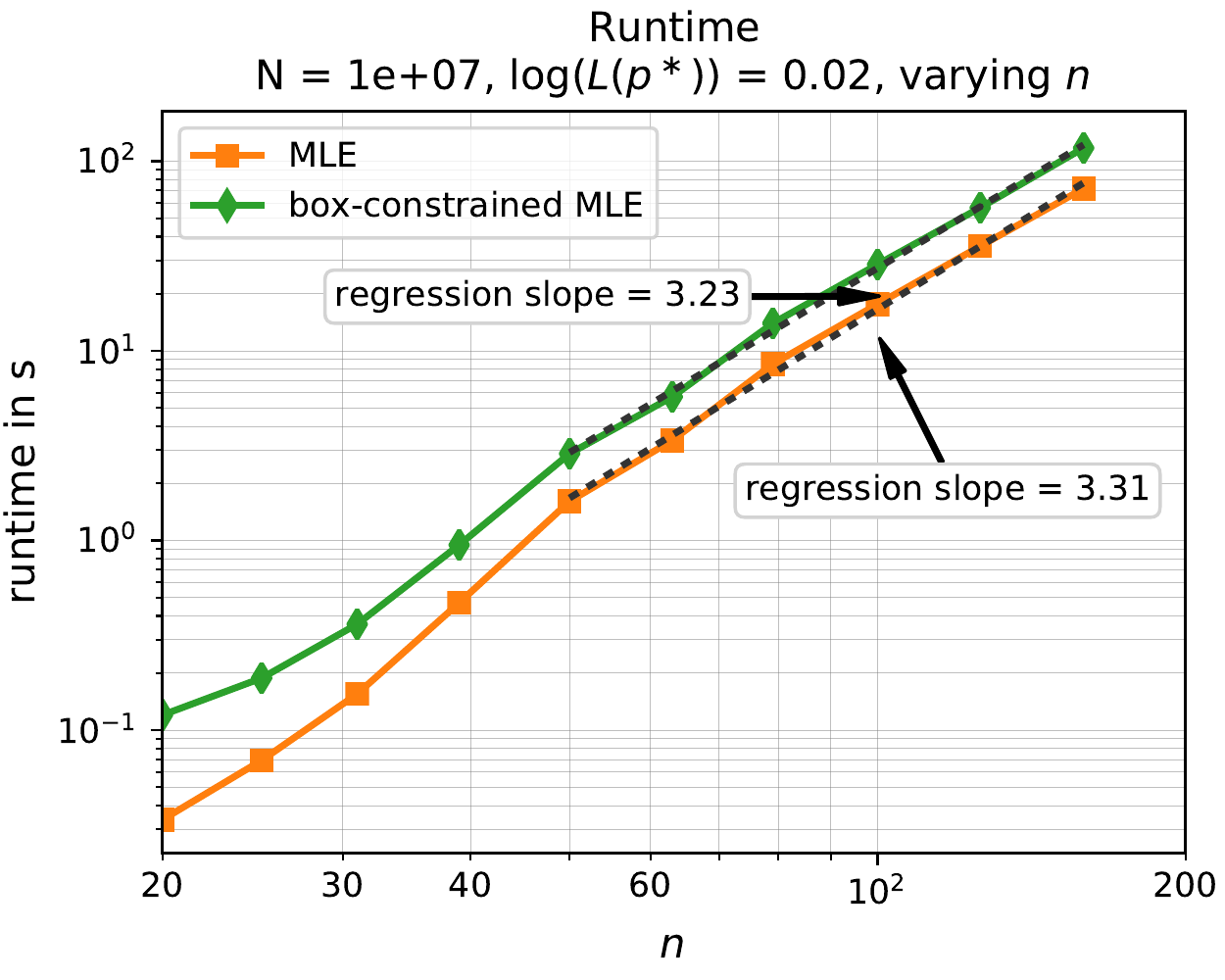}
		\caption{Varying \( n \)}
		\label{fig:pmf_var_n_runtimes}
	\end{subfigure}
	\caption{Runtimes for density on grid}
	\label{fig:pmf-runtimes}
\end{figure}

To investigate the practical performance of the proposed algorithms, in Figures \ref{fig:pmf-runtimes}, we report the runtime averaged over 20 replicates on an AMD 3400G desktop processor.
Here, as well as in the previous examples, we stopped Algorithm \ref{alg:fast-proj} when a relative change in \( \ell^2 \)-norm of less than \( 10^{-6} \) was detected.
Similarly, Algorithm~\ref{alg:prox-newton} was stopped at a relative accuracy of \( 10^{-5} \).
In Figure \ref{fig:pmf_var_N_runtimes}, we observe that the conditioning of the problem improves with larger sample sizes \( N \) and deteriorates for small values of \( N \), leading to a decay in runtime of approximately \( N^{-1.52} \) up to \( N \approx 1,000 \), followed by a milder dependence of \( N^{-0.15} \) for larger values of \( N \).
Note that the runtime for the boxed estimator is only plotted for \( N \ge 10,000 \) because of the presence of zeros in the empirical frequency matrix for smaller values of \( N \).

In Figure \ref{fig:pmf_var_n_runtimes}, we see that, as expected from a Dykstra-type algorithm, the conditioning of the problem worsens with increasing \( n \), necessitating more iterations and thus leading to an increase of runtime until convergence that is larger than the cost of one iteration, which is of order \( n^2 \).
However, it is still reasonably mild, scaling roughly like \( n^{3.3} \) for larger values of \( n \).
Overall, this highlights the practicability of the proposed algorithm for problems of medium size: Instances with \( n \approx 50 \) can be solved within four seconds, while problems of size \( n = 160 \) take under two minutes.
Moreover, adding the additional box constraint \eqref{eq:constraint} only slightly increases the runtime when \( n \) is large.

\subsection{Experiments for continuous density estimation}

We consider the multivariate Gaussian distribution \( P^\ast = N(\mu^\ast, \Sigma^\ast) \) with parameters
\begin{equation}
	\label{eq:jz}
	\mu^\ast =
	\begin{pmatrix}
		0.5\\
		0.5
	\end{pmatrix},
	\quad
	\Sigma^\ast = 
	\begin{pmatrix}
		0.2 & 0.1\\
		0.1 & 0.2
	\end{pmatrix},
\end{equation}
conditioned on the event that \( Z \in [0,1]^2 \) where \( Z \sim P^\ast \).
In other words, we consider the density
\begin{equation}
	\label{eq:ka}
	\rho^\ast(x) = \frac{1}{\int_{[0,1]^2} \tilde \rho(y) \, \ud y} \tilde \rho(x), \quad x \in [0,1]^2,
\end{equation}
where
\begin{equation}
	\label{eq:kb}
	\tilde \rho(x) = \frac{1}{2 \pi \sqrt{0.03}} \exp \left( - \frac{10}{3} \left( (x_1 - 0.5)^2 + (x_2 - 0.5)^2 - (x_1 - 0.5) (x_2 - 0.5) \right) \right).
\end{equation}
\begin{figure}[H]
	\centering
	\begin{subfigure}{0.42\textwidth}
		\includegraphics[width=\textwidth]{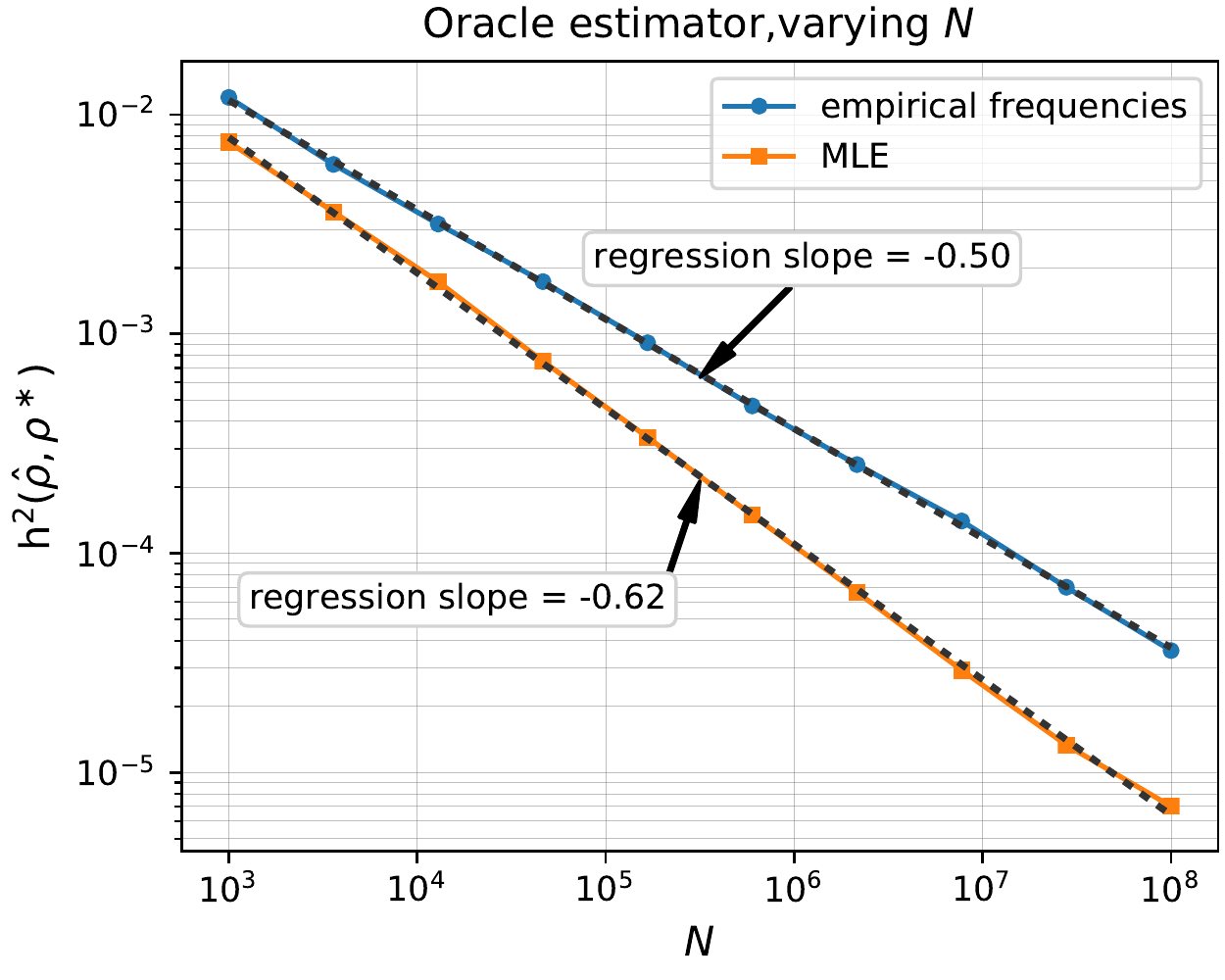}
		\caption{Comparison between empirical frequencies and \( \mtp \) estimator, oracle choice of \( n \)}
		\label{fig:cts_oracle}
	\end{subfigure}
	\hspace{1em}
	\begin{subfigure}{0.42\textwidth}
		\includegraphics[width=\textwidth]{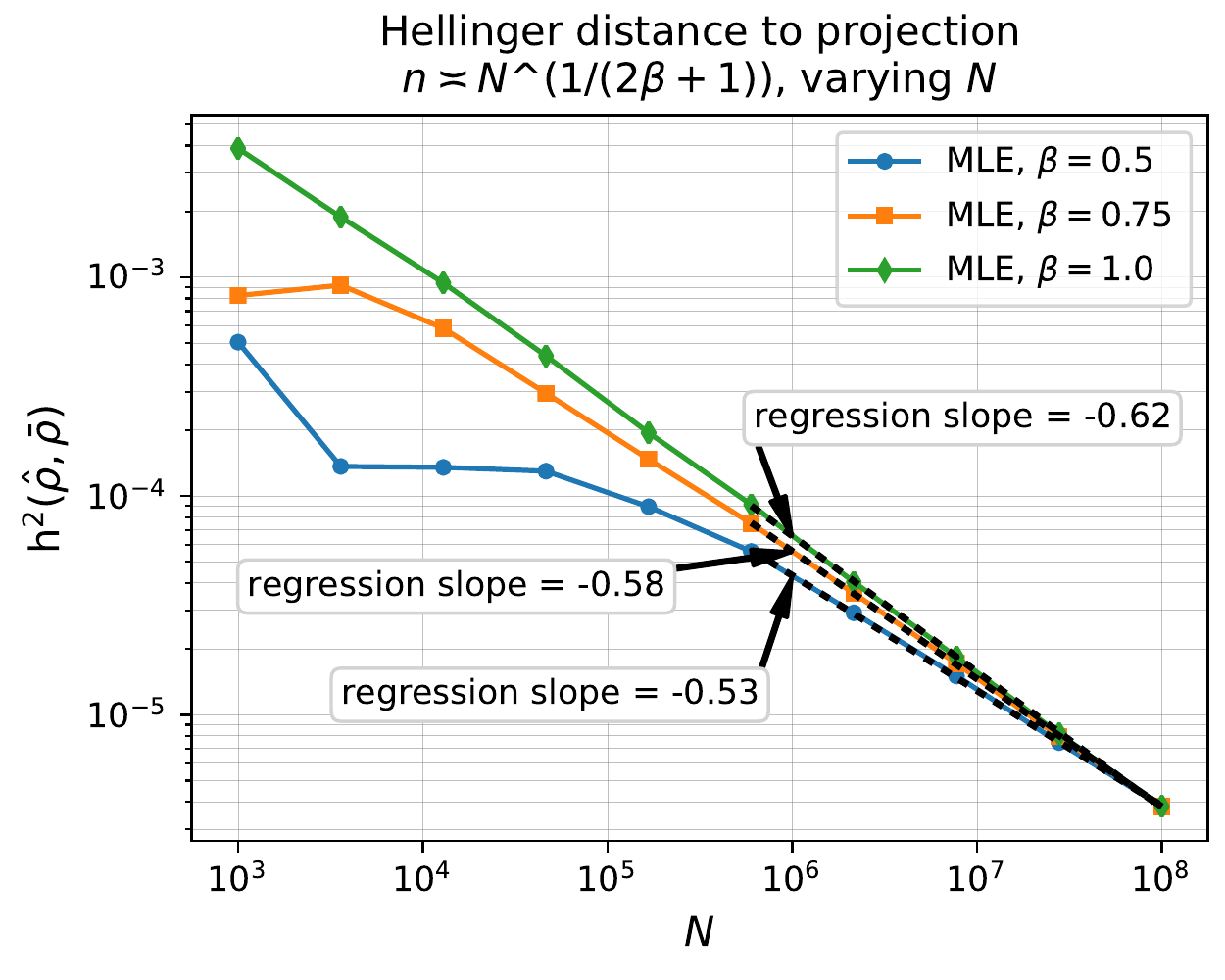}
		\caption{Variance part of the error when varying the scaling of \( n \)}
		\label{fig:cts_var_beta}
	\end{subfigure}
	\caption{Performance of continuous density estimation}
	\label{fig:cts_performance}
\end{figure}

\begin{figure}[h]
	\centering
	\begin{subfigure}{0.42\textwidth}
		\includegraphics[width=\textwidth]{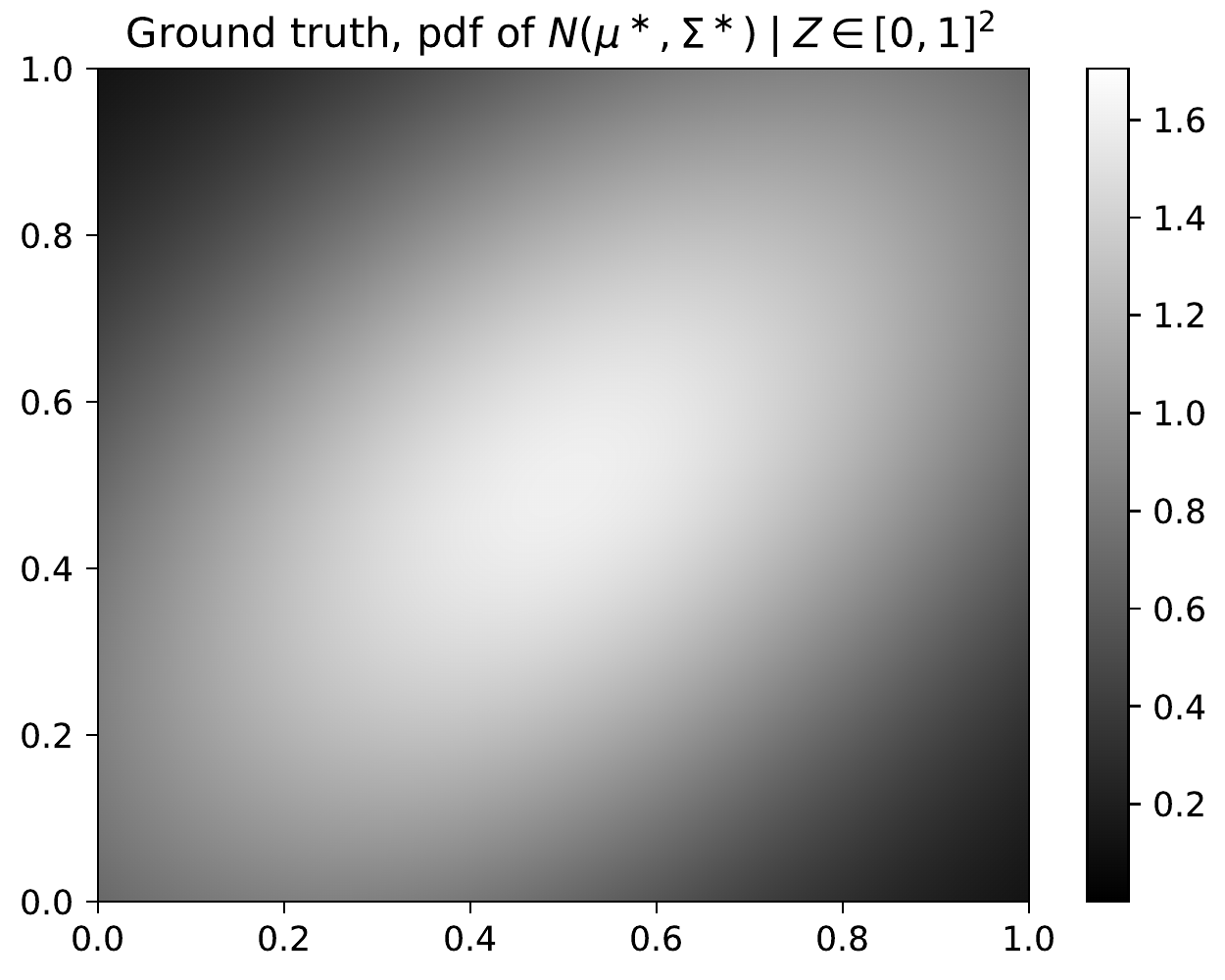}
		\caption{Ground truth \( \rho^\ast \)}
		\label{fig:cts_pdf}
	\end{subfigure}\\
	\begin{subfigure}{0.42\textwidth}
		\includegraphics[width=\textwidth]{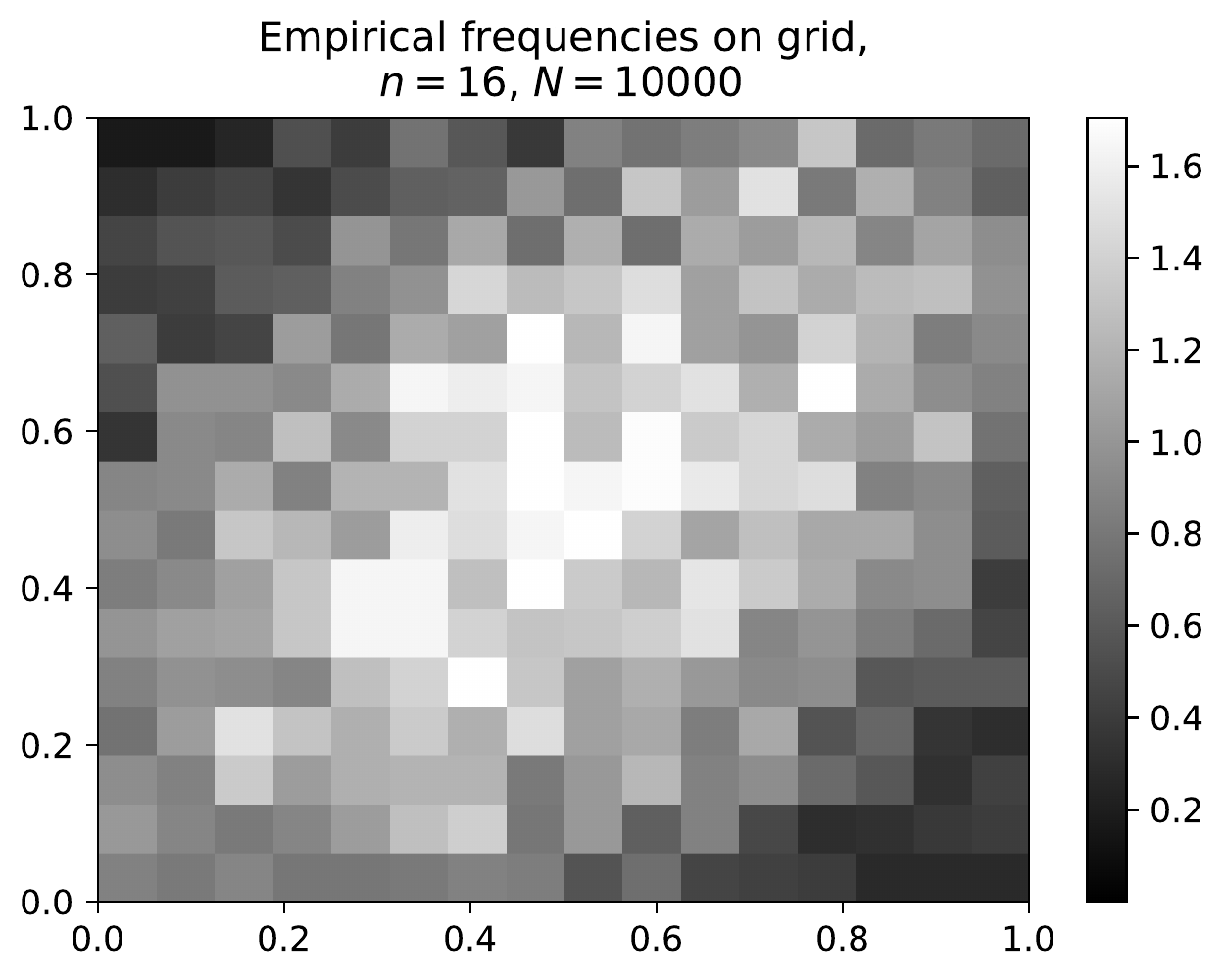}
		\caption{Approximation via frequencies \( Y/N \)}
		\label{fig:cts_freq}
	\end{subfigure}
	\hspace{1em}
	\begin{subfigure}{0.42\textwidth}
		\includegraphics[width=\textwidth]{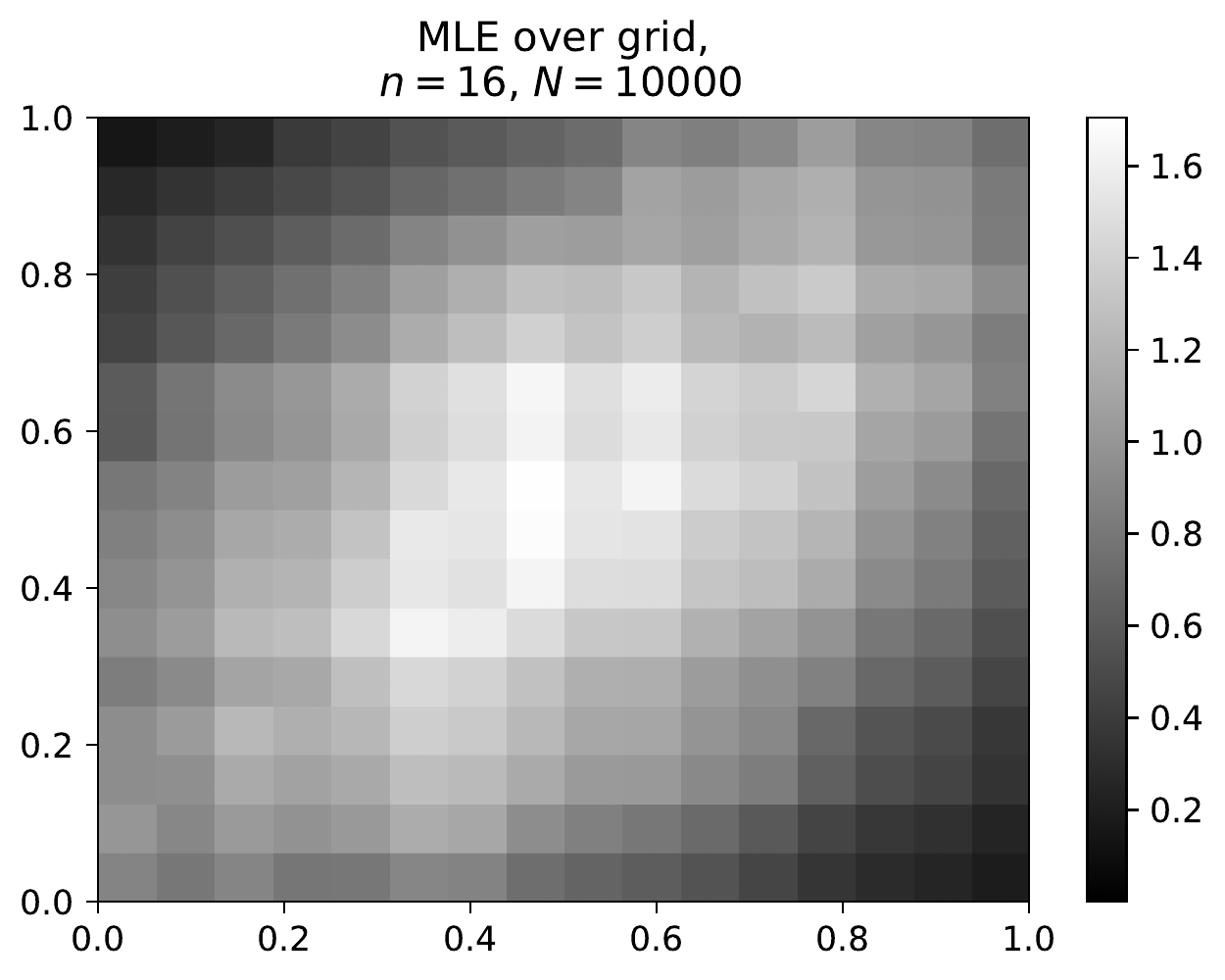}
		\caption{Approximation via MLE \( \hat \rho \)}
		\label{fig:cts_mle}
	\end{subfigure}
	\caption{Visual comparison of continuous density estimation}
	\label{fig:cts}
\end{figure}

Note that on \( [0,1]^2 \), \( \rho^\ast \in C^\infty \) and that it is \( \mtp \), which can be easily checked by computing the mixed derivative \( \partial_1 \partial_2 \) of the log-density.
Here, we use it to evaluate the performance of the gridding strategy from Section~\ref{sec:est-cts} for both an oracle choice of \( n \), that is, exploiting the knowledge of the ground truth to pick the best possible value of \( n \) from a given list, and a fixed scaling of \( n \) in the cases \( \beta \in \{0.5, 0.75, 1.0\} \).

First, in Figure~\ref{fig:cts_oracle}, with a varying number of \iid observations \( \{Z_k\}_{k=1}^N \) from \( P^\ast \), we plot the squared Hellinger distance \( \hel^2(\hat \rho, \rho^\ast) \) for the estimator in \eqref{eq:gd}, where \( n \) is picked from 10 logarithmically spaced values between \( n = 4 \) and \( n = 200 \) according to which yields the smallest Hellinger distance, and for a similarly defined estimator where \( \hat p \) is replaced by the empirical frequency matrix \( Y/N \).
We observe that the empirical frequency matrix achieves a rate of about \( N^{-1/2} \), corresponding to the rate for general H\"older functions in 2D with \( \beta = 1 \), while the \( \mtp \) MLE comes close to the predicted \( N^{-2/3} \) rate that corresponds to the \( \beta \in [1, 2) \) range (regression coefficient of \( 0.62 \)).

Second, to investigate the effect of different \( \beta \), in Figure \ref{fig:cts_var_beta}, we use a fixed scaling \( n = C N^{1/(2 \beta + 1)} \).
Note that we cannot expect to observe the rates in Theorem~\ref{thm:upper-cts} when considering the distance \( \hel^2(\hat \rho, \rho^\ast) \) in this setup since \( \rho^\ast \) is \( C^\infty \)-smooth.
Denoting by \( \bar \rho \) the piecewise constant approximation to \( \rho^\ast \) (see \eqref{eq:ge} below), this is due to the fact that the bias term \( \hel^2(\bar \rho, \rho^\ast )\) could dominate the overall error \( \hel^2(\hat \rho, \rho^\ast) \).
Hence, we only plot the Hellinger distance corresponding to the variance part \( \hel^2(\hat \rho, \bar \rho) \).
For computational reasons, \( C \) is chosen for each \( \beta \) so that for \( N = 10^8 \), we have \( n = 200 \).
Due to the similarities between the regular MLE and its box-constrained version observed in the previous section, all calculations were performed using the regular MLE, \eqref{eq:mle-1}, resulting in slightly faster computations.

Performing linear regression on the doubly logarithmic plot for large values of \( N \), we observe rates of \( 0.53 \), \( 0.58 \), and \( 0.62 \) for \( \beta = 0.5, 0.75, 1.0 \), respectively.
These are close to \( 0.5 \), \( 0.6 \), and \( 2/3 \), respectively, as predicted by Theorem~\ref{thm:upper-cts}.
Additionally, we present heat maps of the density \( \rho^\ast \) (Figure~\ref{fig:cts_pdf}), as well as an approximation via the frequncy matrix \( Y/N \) (Figure~\ref{fig:cts_freq}) and the MLE (Figure~\ref{fig:cts_mle}) for \( N = \) 10,000 and \( n = 16 \).
The visual smoothing effect of the MLE is quite obvious in this case.

\section{Proofs} 
\label{sec:proofs}

The proofs of our results are provided in this section. 
We first prove the upper bounds Theorems~\ref{thm:upper} and~\ref{thm:upper-cts} in the discrete and smooth cases respectively, and then the lower bounds Theorems~\ref{thm:lower} and~\ref{thm:lower-cts}. 
In the proofs, we make use of the well-known relation~\cite[Lemma~7.23]{Mas07} that for PMFs $p$ and $q$ on $\cX = [\dimone] \times [\dimtwo]$ or $[0, 1]^2$, 
\begin{align}
2 \hel^2(p, q) \le \KL(p, q) \le 2 \Big( 2 + \log \big( \max_{x \in \cX} \frac{p(x)}{q(x)} \big) \Big) \hel^2(p, q) . \label{eq:hel-kl}
\end{align}

\subsection{Proof of Theorem \ref{thm:upper}}
\label{sec:pf-ub}


\subsubsection{Setup of the proof: quadratic approximation}

Let us define \( \eps \defn \frac{Y}{\samples} - p^\ast \). 
Denote by $\cA_1$ the event of probability $1 - 2 \delta$ that the bounds in Lemma~\ref{lem:obs-bd} hold. 
On this event, $\groundtruth$ lies in the cube $\cC(Y)$ defined in \eqref{eq:constraint}, so we have 
$$ 
\frac 1{\samples} \langle Y, \tilde \theta \rangle - \sum_{i, j} e^{\tilde \theta_{i, j}} \ge \frac 1{\samples} \langle Y, \groundtruth \rangle - 1 , 
$$
which is equivalent to
\begin{align} \label{eq:basic}
\langle p^\ast, \groundtruth - \tilde \theta \rangle + \sum_{i, j} e^{\tilde \theta_{i, j}} - 1 \le \langle \eps, \tilde \theta - \groundtruth \rangle .
\end{align}
In addition, the definition of $\cC(Y)$ yields that $|\tilde \theta_{i, j} - \groundtruth_{i, j}| \le \log 2 - \log \frac 23 < 1.1$ for all $i, j$.

By a quadratic Taylor approximation of $e^y$, it holds  for $x \le 0$ and  $|y - x| \le 1.1$ that
\begin{align}
e^{x} + e^{x} (y - x) +  e^{x} (y - x)^2 / 4 \le e^y \le e^{x} + e^{x} (y - x) +  2 \, e^{x} (y - x)^2 . \label{eq:taylor-exp}
\end{align}
Applying this approximation to the exponential terms of the left-hand side of \eqref{eq:basic}, we obtain
\begin{align*}
	\leadeq{\langle p^\ast, \groundtruth - \tilde \theta \rangle + \sum_{i, j} e^{\groundtruth_{i, j}} + \sum_{i, j} e^{\groundtruth_{i, j}} ( \tilde \theta_{i, j} - \groundtruth_{i, j} ) + \frac 14 \sum_{i, j} e^{\groundtruth_{i, j}} ( \tilde \theta_{i, j} - \groundtruth_{i, j} )^2 - 1 }\\
&\le \langle p^\ast, \groundtruth - \tilde \theta \rangle + \sum_{i, j} e^{\tilde \theta_{i, j}} - 1 \\
&\le \langle p^\ast, \groundtruth - \tilde \theta \rangle + \sum_{i, j} e^{\groundtruth_{i, j}} + \sum_{i, j} e^{\groundtruth_{i, j}} ( \tilde \theta_{i, j} - \groundtruth_{i, j} ) + 2 \sum_{i, j} e^{\groundtruth_{i, j}} ( \tilde \theta_{i, j} - \groundtruth_{i, j} )^2 - 1 ,
\end{align*}
or equivalently, 
\begin{align} 
\frac 14 \sum_{i, j} p^\ast_{i, j} ( \tilde \theta_{i, j} - \groundtruth_{i, j} )^2 
\le \langle p^\ast, \groundtruth - \tilde \theta \rangle + \sum_{i, j} e^{\tilde \theta_{i, j}} - 1 
\le 2 \sum_{i, j} p^\ast_{i, j} ( \tilde \theta_{i, j} - \groundtruth_{i, j} )^2 . \label{eq:qua-approx}
\end{align}
The rest of the proof hinges on this quadratic approximation.
Particularly, it follows from~\eqref{eq:basic} and~\eqref{eq:qua-approx} that
\begin{align}
\frac 14 \sum_{i, j} p^\ast_{i, j} ( \tilde \theta_{i, j} - \groundtruth_{i, j} )^2
\le \langle \eps, \tilde \theta - \groundtruth \rangle . \label{eq:fg}
\end{align}
The main task in the sequel is to bound the right-hand side of~\eqref{eq:fg}. 
The strategy builds upon a spectral decomposition technique from the paper~\cite{HutMaoRigRob19} on Monge matrix estimation. 
%
%

\subsubsection{Spectral decomposition of the difference operator}

Recall the difference operator $D$ defined in~\eqref{eq:diff-op} and $\Dtwo$ defined analogously for dimension $\dimtwo$. 
Throughout the proof, whenever we introduce notation in dimension $\dimone$,  the analogous one in dimension $\dimtwo$ is denoted by the same symbol with a tilde. 
We will decompose the noise $\eps$ in \eqref{eq:fg} according to a spectral decomposition of $D$, so let us recall some basic facts about the matrix $D$. 

Denote the singular value decomposition of \( \Done \) by
\begin{equation}
\label{eq:dc}
\Done =  U \Sigma W^\top, \quad U \in \R^{(\dimone - 1) \times (\dimone - 1)}, \quad \Sigma \in \R^{(\dimone - 1) \times \dimone}, \quad W \in \R^{\dimone \times \dimone}
\end{equation}
where we order the non-zero singular values of \( D \) in \( \Sigma \) in ascending magnitude, so that the last column of \( W \) spans the null-space of \( D \).
In addition, we write $W = \begin{bmatrix}
w_1 & \cdots & w_{\dimone}
\end{bmatrix}.$
Let us define a set of double indices
\begin{equation}
\label{eq:eu}
J \defn \big\{ (l, r) \in [\dimone] \times [\dimtwo] : lr \le k \big\} \cup \big( [\dimone] \times \{\dimtwo\} \big) \cup \big( \{\dimone\} \times [\dimtwo]  \big) ,
\end{equation}
and set \( J^c = ([\dimone] \times [\dimtwo]) \setminus J \).

We introduce a projection operator \( \Pi : \R^{\dimone \times \dimtwo} \to \R^{\dimone \times \dimtwo} \), defined as the projection onto the linear span of \( \{ w_i \tilde w_j^\top : (i, j) \in J^c \}  \) that is orthogonal with respect to the inner product
\begin{equation}
	\label{eq:a}
	\langle A , B \rangle_{1/p^\ast} \defn \sum_{i, j} \frac{1}{p^\ast_{i, j}} A_{i, j} B_{i, j}.
\end{equation}
In particular, there exists an orthonormal basis \( \{ \V^{(l, r)} \in \R^{\dimone \times \dimtwo} : (l, r) \in [\dimone] \times [\dimtwo] \} \) of $\R^{\dimone \times \dimtwo}$ with respect to the inner product \( \langle . , . \rangle_{1/p^\ast} \) such that
\begin{equation}
	\label{eq:b}
	\Pi(A) = \sum_{(l, r) \in J^c} \V^{(l, r)} \langle \V^{(l, r)} , A \rangle_{1/p^\ast}
	\quad \text{ and } \quad
	(I - \Pi)(A) = \sum_{(l, r) \in J} \V^{(l, r)} \langle \V^{(l, r)} , A \rangle_{1/p^\ast}.
\end{equation}

To characterize these projection operators further, we introduce the following notation. 
Let $\odot$ and $\oslash$ denote entrywise multiplication and division respectively between matrices. With a slight abuse of notation, we use $\sqrt{p^\ast}$ and $1/p^\ast$ to denote the entrywise square root and the entrywise inverse of $p^\ast$ respectively.
Let \( \Lambda  \) and \( \Lambda^{-1} \) be the scaling operators from $\R^{\dimone \times \dimtwo}$ to itself, defined by 
\begin{alignat}{2}
	\label{eq:c}
	\Lambda(A) = {} & A \otimes p^\ast, \quad &A \in {} & \R^{\dimone \times \dimtwo}, \\
	\Lambda^{-1}(A) = {} & A \oslash p^\ast, \quad &A \in {} & \R^{\dimone \times \dimtwo},
\end{alignat}
respectively. 
Let $\mathcal{L}$ be the linear operator from $\R^{|J^c|}$ (indexed by $(l, r) \in J^c$) to $\R^{\dimone \times \dimtwo}$, defined by
\begin{alignat}{2}
	\label{eq:c2}
	\mathcal{L} (B) = {} & \sum_{(l, r) \in J^c} B_{l, r} w_l \tilde w_r^\top , \quad & B \in {} & \R^{|J^c|},
\end{alignat}
and denote by \( \mathcal{L}^\top \) the transpose of $\mathcal{L}$ with respect to the standard inner products in the corresponding spaces. 
In other words, we have
\begin{equation}
	\label{eq:d}
	(\mathcal{L}^\top (A))_{l, r} = \langle w_l \tilde w_r^\top , A \rangle, \quad (l, r) \in J^c, \quad A \in \R^{\dimone \times \dimtwo}.
\end{equation}


The linear operators $\Pi$ and $\Lambda$ can be viewed as $\dimone \dimtwo \times \dimone \dimtwo$ matrices, while the linear operator $\mathcal{L}$ can be seen as an $\dimone \dimtwo \times |J^c|$ matrix. 
Moreover, 
we have the following lemma whose proof is deferred to Section~\ref{sec:gram-eig}.

\begin{lemma} \label{lem:gram-eig}
The smallest eigenvalue of the operator \( \mathcal{L}^\top \Lambda^{-1} \mathcal{L} \) satisfies that 
	\begin{equation}
		\label{eq:y}
		\lambda_{\min}(\mathcal{L}^\top \Lambda^{-1} \mathcal{L}) \ge \frac{1}{\pmax}.
	\end{equation}
Moreover, $\Pi$ can be written as 
\begin{equation}
	\label{eq:et}
	\Pi = \mathcal{L} (\mathcal{L}^\top \Lambda^{-1} \mathcal{L})^{-1} \mathcal{L}^\top \Lambda^{-1}.
\end{equation}
\end{lemma}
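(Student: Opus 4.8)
The plan is to attack the two claims separately, starting from the closed-form \eqref{eq:et} since it also clarifies the structure needed for the eigenvalue bound. First I would verify that the operator $P := \mathcal{L} (\mathcal{L}^\top \Lambda^{-1} \mathcal{L})^{-1} \mathcal{L}^\top \Lambda^{-1}$ is the $\langle\cdot,\cdot\rangle_{1/p^\ast}$-orthogonal projection onto $\operatorname{range}(\mathcal{L}) = \linspan\{w_l \tilde w_r^\top : (l,r) \in J^c\}$. To see this, note that $\langle A, B\rangle_{1/p^\ast} = \langle \Lambda^{-1} A, B\rangle$ in the standard inner product (this is just \eqref{eq:a} rewritten via \eqref{eq:c}); then $P$ is clearly idempotent ($P^2 = P$ by direct cancellation), its range is contained in $\operatorname{range}(\mathcal{L})$, it acts as the identity on $\operatorname{range}(\mathcal{L})$, and it is self-adjoint with respect to $\langle\cdot,\cdot\rangle_{1/p^\ast}$ because $\langle P A, B\rangle_{1/p^\ast} = \langle \Lambda^{-1} \mathcal{L}(\mathcal{L}^\top\Lambda^{-1}\mathcal{L})^{-1}\mathcal{L}^\top \Lambda^{-1} A, B\rangle$ is visibly symmetric in $A$ and $B$. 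The one point requiring care is that $\mathcal{L}^\top \Lambda^{-1} \mathcal{L}$ is invertible, i.e. that $\{w_l \tilde w_r^\top : (l,r)\in J^c\}$ is linearly independent; this follows because $\{w_l\}$ and $\{\tilde w_r\}$ are orthonormal bases (columns of the orthogonal matrices $W$, $\tilde W$ from \eqref{eq:dc}), so the full family $\{w_l \tilde w_r^\top\}_{(l,r)\in[\dimone]\times[\dimtwo]}$ is an orthonormal basis of $\R^{\dimone\times\dimtwo}$ in the standard inner product, hence in particular independent. Since $\Pi$ was defined in \eqref{eq:b} as exactly this orthogonal projection, \eqref{eq:et} follows.

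For the eigenvalue bound \eqref{eq:y}, I would compute the entries of the Gram matrix $G := \mathcal{L}^\top \Lambda^{-1} \mathcal{L} \in \R^{|J^c|\times|J^c|}$ directly. Using \eqref{eq:c2}, \eqref{eq:d} and \eqref{eq:c}, for $(l,r),(l',r')\in J^c$,
\begin{equation}
G_{(l,r),(l',r')} = \langle w_l \tilde w_r^\top,\ (w_{l'}\tilde w_{r'}^\top)\oslash p^\ast\rangle = \sum_{i,j} \frac{w_l(i) w_{l'}(i)\, \tilde w_r(j)\,\tilde w_{r'}(j)}{p^\ast_{i,j}} .
\end{equation}
For a unit vector $B \in \R^{|J^c|}$ set $A := \mathcal{L}(B) = \sum_{(l,r)\in J^c} B_{l,r} w_l \tilde w_r^\top$. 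Then $B^\top G B = \langle A, A\oslash p^\ast\rangle = \sum_{i,j} A_{i,j}^2/p^\ast_{i,j} \ge \frac{1}{\pmax}\sum_{i,j}A_{i,j}^2 = \frac{1}{\pmax}\|A\|_F^2$, where the inequality uses $p^\ast_{i,j}\le\pmax$. Finally, since $\{w_l\tilde w_r^\top\}$ is an orthonormal basis of $\R^{\dimone\times\dimtwo}$ in the Frobenius inner product, $\|A\|_F^2 = \sum_{(l,r)\in J^c} B_{l,r}^2 = \|B\|_2^2 = 1$. Hence $B^\top G B \ge 1/\pmax$ for every unit $B$, which is exactly $\lambda_{\min}(\mathcal{L}^\top\Lambda^{-1}\mathcal{L}) \ge 1/\pmax$.

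I expect no genuine obstacle here; the lemma is essentially a bookkeeping exercise once one recognizes that $\mathcal{L}$ is an isometry from $\R^{|J^c|}$ into $(\R^{\dimone\times\dimtwo},\|\cdot\|_F)$ and that $\Lambda^{-1}$ is multiplication by $1/p^\ast$, which is bounded below by $1/\pmax$. The only mild subtlety is keeping the two inner products — the standard one (used to define $\mathcal{L}^\top$) and the weighted one $\langle\cdot,\cdot\rangle_{1/p^\ast}$ (used to define $\Pi$) — carefully apart; the identity $\langle A,B\rangle_{1/p^\ast} = \langle \Lambda^{-1}A, B\rangle$ is the bridge between them and should be stated explicitly before both parts of the argument.
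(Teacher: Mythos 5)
Your proof is correct and follows essentially the same route as the paper. For the eigenvalue bound, the argument is identical: both observe that $\mathcal{L}$ is an isometry into $(\R^{\dimone\times\dimtwo}, \|\cdot\|_F)$ because $\{w_l\tilde w_r^\top\}$ is orthonormal, and then bound $B^\top G B = \|\mathcal{L} B\|_{1/p^\ast}^2 \ge \lambda_{\min}(\Lambda^{-1})\|\mathcal{L} B\|_F^2 = 1/\pmax$. For the projection formula, the paper goes through the first-order optimality condition of the least-squares problem $\min_B \|\mathcal{L} B - A\|_{1/p^\ast}^2$, whereas you instead verify directly that the candidate operator is idempotent, self-adjoint in the weighted inner product, and has the right range; these are two standard and interchangeable ways of characterizing the orthogonal projection, so the difference is cosmetic. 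Your explicit remark that $\langle A,B\rangle_{1/p^\ast} = \langle \Lambda^{-1}A,B\rangle$ bridges the two inner products, and your note that invertibility of $\mathcal{L}^\top\Lambda^{-1}\mathcal{L}$ follows from linear independence of the $w_l\tilde w_r^\top$, are both useful clarifications that the paper leaves implicit.
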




\medskip

To control \( \langle \epsilon , \premle - \groundtruth \rangle \) on the right-hand side of \eqref{eq:fg}, we decompose it as 
\begin{align}
\langle \varepsilon , \premle - \groundtruth \rangle
=  \langle (I - \Pi) (\eps  ) ,  \premle - \groundtruth   \rangle
+  \langle \Pi  ( \eps  ) ,  \premle - \groundtruth   \rangle. \label{eq:es}
\end{align}
Before proceeding to bound these two terms separately, we state two lemmas whose proofs are deferred to Sections~\ref{sec:null} and~\ref{sec:var-bd}, respectively. 

\begin{lemma} \label{lem:null}
	The image of the projection \( \Pi \) is included in the image of the map $A \mapsto \Done^\top A \Dtwo$.
%
\end{lemma}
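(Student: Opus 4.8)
The plan is purely linear-algebraic: express both subspaces in the common basis $\{w_i \tilde w_j^\top\}$ coming from the singular value decompositions in \eqref{eq:dc}, and then read off the inclusion from the definition of $J$ in \eqref{eq:eu}.

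First I would record that, since $D \in \R^{(\dimone-1)\times\dimone}$ has full row rank $\dimone - 1$, all the singular values $\sigma_1 \le \cdots \le \sigma_{\dimone-1}$ in \eqref{eq:dc} are strictly positive and the last column of $\Sigma$ vanishes; hence the column space of $D^\top = W\Sigma^\top U^\top$ is exactly $\operatorname{span}\{w_1, \ldots, w_{\dimone-1}\}$, and analogously the column space of $\tilde D^\top$ is $\operatorname{span}\{\tilde w_1, \ldots, \tilde w_{\dimtwo-1}\}$. From this I would deduce that the image of the map $A \mapsto D^\top A \tilde D$, taken over $A \in \R^{(\dimone-1)\times(\dimtwo-1)}$, equals $\operatorname{span}\{w_i \tilde w_j^\top : i \in [\dimone-1],\, j \in [\dimtwo-1]\}$: writing $D^\top A \tilde D = W\,\Sigma^\top (U^\top A \tilde U)\, \tilde\Sigma\, \tilde W^\top$, the middle factor $U^\top A \tilde U$ ranges over all of $\R^{(\dimone-1)\times(\dimtwo-1)}$, and since $\Sigma^\top,\tilde\Sigma$ have strictly positive diagonal entries (with zero last row, resp.\ last column), the product $W\Sigma^\top(\cdot)\tilde\Sigma\tilde W^\top = \sum_{i,j}(\cdot)_{ij}\, w_i\tilde w_j^\top$ sweeps out exactly that span; for instance $w_l\tilde w_r^\top$ is the image of $A = \sigma_l^{-1}\tilde\sigma_r^{-1}(Ue_l)(\tilde Ue_r)^\top$ whenever $l \le \dimone-1$ and $r \le \dimtwo-1$.

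Finally, by \eqref{eq:b} the image of $\Pi$ is $\operatorname{span}\{w_l\tilde w_r^\top : (l,r)\in J^c\}$, and since $J$ contains both $[\dimone]\times\{\dimtwo\}$ and $\{\dimone\}\times[\dimtwo]$ by \eqref{eq:eu}, every $(l,r)\in J^c$ satisfies $l \le \dimone-1$ and $r\le\dimtwo-1$; that is, $J^c\subseteq[\dimone-1]\times[\dimtwo-1]$. Combining the two descriptions yields that the image of $\Pi$ is contained in $\operatorname{span}\{w_i\tilde w_j^\top : i\in[\dimone-1],\, j\in[\dimtwo-1]\}$, which is exactly the image of $A\mapsto D^\top A\tilde D$ — this is the claim. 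I do not foresee a real obstacle here; the only point that needs care is matching the ordering convention of \eqref{eq:dc} (ascending singular values, with the null vector of $D$ placed last) to the index set $J$, so that the surviving singular vectors $w_1,\dots,w_{\dimone-1},\tilde w_1,\dots,\tilde w_{\dimtwo-1}$ are precisely those indexed by coordinates that can occur in $J^c$.
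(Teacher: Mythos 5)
Your proof is correct and follows essentially the same route as the paper: both use the SVD of $D$ and $\Dtwo$ to identify the image of $A \mapsto D^\top A \Dtwo$ with $\operatorname{span}\{w_i \tilde w_j^\top : i \in [\dimone-1],\, j \in [\dimtwo-1]\}$, and then observe $J^c \subseteq [\dimone-1]\times[\dimtwo-1]$. The only difference is that you compute this image directly, whereas the paper characterizes the kernel of the adjoint map $A \mapsto D A \Dtwo^\top$ and passes to its orthogonal complement; this is a minor rearrangement of the same argument, and you also correctly avoid needing the explicit trigonometric entries of the SVD that the paper records.
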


\begin{lemma} \label{lem:var-bd}
For any $(i, j) \in [\dimone] \times [\dimtwo]$, we have that
$$
\sum_{(l, r) \in J^c} \Sigma_{l, l}^{-2} \tilde \Sigma_{r, r}^{-2} U_{i, l}^2 \tilde U_{j, r}^2
\lesssim \frac{\dimone \dimtwo}{k} \log (\dimtwo)  .
$$
\end{lemma}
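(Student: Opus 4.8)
The plan is to explicitly compute the singular value decomposition of the difference operator $\Done$ and then bound the sum by splitting it according to the structure of the index set $J^c$. First I recall that $\Done \in \R^{(\dimone-1)\times \dimone}$ has the well-known eigenstructure coming from the discrete Laplacian: the nonzero singular values are $\Sigma_{l,l} = 2\sin\!\big(\tfrac{l\pi}{2\dimone}\big)$ for $l \in [\dimone-1]$ (with the ordering in ascending magnitude as fixed in \eqref{eq:dc}), and the corresponding left singular vectors $U_{\cdot,l}$ have entries of the form $U_{i,l} = \sqrt{2/\dimone}\,\cos\!\big(\tfrac{(2i-1)l\pi}{2\dimone}\big)$ up to normalization. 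In particular $\Sigma_{l,l}^{-2} \asymp \dimone^2/l^2$ uniformly, and $U_{i,l}^2 \lesssim 1/\dimone$ for every $i,l$; the same statements hold with tildes and $\dimone$ replaced by $\dimtwo$.

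Next I substitute these bounds into the sum. Using $\Sigma_{l,l}^{-2}\tilde\Sigma_{r,r}^{-2} U_{i,l}^2 \tilde U_{j,r}^2 \lesssim \frac{\dimone^2}{l^2}\cdot\frac{\dimtwo^2}{r^2}\cdot\frac1\dimone\cdot\frac1\dimtwo = \frac{\dimone\dimtwo}{l^2 r^2}$, it suffices to bound $\sum_{(l,r)\in J^c} \frac{1}{l^2 r^2}$. Now recall from \eqref{eq:eu} that $J$ contains all pairs with $lr \le k$ together with the last row and last column, so $J^c \subseteq \{(l,r) : lr > k,\ l \le \dimone-1,\ r \le \dimtwo-1\}$. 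Hence
\[
\sum_{(l,r)\in J^c} \frac{1}{l^2 r^2} \le \sum_{\substack{l \ge 1,\ r \ge 1 \\ lr > k}} \frac{1}{l^2 r^2}.
\]
To estimate this, I split on the value of $l$: for each fixed $l$, the constraint forces $r > k/l$, so $\sum_{r > k/l} r^{-2} \lesssim \min\{1, l/k\}$ (the minimum coming from the trivial bound $\sum_{r\ge1} r^{-2} \lesssim 1$ when $k/l < 1$). Then $\sum_l \frac{1}{l^2}\min\{1, l/k\} = \sum_{l \le k} \frac{1}{l^2}\cdot\frac{l}{k} + \sum_{l > k}\frac{1}{l^2} \lesssim \frac{1}{k}\sum_{l\le k}\frac1l + \frac1k \lesssim \frac{\log k}{k} \lesssim \frac{\log \dimtwo}{k}$, where in the last step I use that the relevant summation ranges are capped at $\dimone - 1$ and $\dimtwo - 1$ and, since $\dimone \ge \dimtwo$, one can arrange the bound to read $\frac{\log\dimtwo}{k}$ (taking logs of the smaller dimension; if $k \le \dimtwo$ this is immediate, and if $k > \dimtwo$ the sum is even smaller). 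Multiplying back the factor $\dimone\dimtwo$ gives the claimed bound $\lesssim \frac{\dimone\dimtwo}{k}\log\dimtwo$.

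The main obstacle is bookkeeping the asymmetry between the two dimensions and the exact shape of $J^c$: one must be careful that the logarithmic factor ends up as $\log\dimtwo$ (the smaller dimension) rather than $\log\dimone$, which requires choosing to sum over $l$ (the index associated with the larger dimension $\dimone$) on the outside and exploiting that $r$ ranges only up to $\dimtwo - 1$. A secondary technical point is justifying the uniform bounds $\Sigma_{l,l}^{-2} \lesssim \dimone^2/l^2$ and $U_{i,l}^2 \lesssim 1/\dimone$ from the explicit trigonometric formulas, which is routine since $\sin(x) \gtrsim x$ on $[0,\pi/2]$ and cosines are bounded by $1$; I would state these as a preliminary computation rather than belabor them.
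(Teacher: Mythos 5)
Your proposal is correct and follows essentially the same route as the paper: plug in the explicit SVD of $\Done$ (singular values $\asymp l/\dimone$, singular vectors uniformly bounded by $\sqrt{2/\dimone}$), reduce each term to $\lesssim \dimone\dimtwo/(l^2 r^2)$, and then sum over $J^c$ using $lr>k$. Two small remarks. First, the entries of $U$ in \eqref{eq:cu} are sines (the cosines you wrote are the rows of $W$), but this is immaterial since you only use $U_{i,l}^2\lesssim 1/\dimone$. Second, your displayed chain ends with $\tfrac{1}{k}\sum_{l\le k}\tfrac1l \lesssim \tfrac{\log k}{k}\lesssim \tfrac{\log\dimtwo}{k}$, and the last inequality is false in general (for instance $k\asymp \dimone\dimtwo$ with $\dimtwo$ fixed); the correct fix is exactly the one you gesture at: since $r\le \dimtwo-1$ in $J^c$, only $l > k/\dimtwo$ contribute, so the harmonic sum runs over $k/\dimtwo < l \le k$ and gives $\tfrac1k\log\dimtwo$, plus $\sum_{l>k}l^{-2}\lesssim 1/k$. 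The paper sidesteps this bookkeeping by putting the sum over the smaller-dimension index $r\in[\dimtwo]$ on the outside, so that $\sum_{r\le\dimtwo} r^{-1}\lesssim\log\dimtwo$ appears directly; with your order of summation the restriction $l>k/\dimtwo$ must be made explicit, but once it is, the argument closes and matches the paper's bound.
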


\subsubsection{Bounding the first term in~\eqref{eq:es}}

By H\"older's inequality, 
\begin{align}
	\langle (I - \Pi)(\eps) ,  \premle - \groundtruth \rangle
	= \big\langle (I - \Pi)(\eps) \oslash \sqrt{p^\ast},  (\premle - \groundtruth) \odot  \sqrt{p^\ast} \, \big\rangle 
	\le \| (I - \Pi)(\eps) \|_{1/p^\ast} \|  \premle - \groundtruth \|_{p^\ast} .  \label{eq:ev}
\end{align}

Now we focus on the quantity $\| (I - \Pi)(\eps) \|_{1/p^\ast} $. By the definition of $\Pi$ and the orthogonality condition that $\langle \V^{(l, r)}, \V^{(l', r')} \rangle = 0$ for any $(l, r) \ne (l', r')$, we obtain that 
\begin{align}
	\| (I - \Pi)(\eps) \|_{1/p^\ast} ^2
&= \Big\| \sum_{(l, r) \in J} \V^{(l, r)} \langle \V^{(l, r)} , \epsilon \rangle_{1/p^\ast} \Big\|_{1/p^\ast}^2 \\
&= \sum_{(l, r) \in J} \langle \V^{(l, r)} , \epsilon \rangle_{1/p^\ast}^2
\le |J| \max_{(l, r) \in J} ( \langle \V^{(l, r)} , \epsilon \rangle_{1/p^\ast} )^2 . \label{eq:sq-bd}
\end{align}
Note that we can write
\begin{align}
\label{eq:ee}
\langle \V^{(l, r)} , \epsilon \rangle_{1/p^\ast} = \big\langle \frac{1}{\samples} \V^{(l, r)} \oslash p^\ast , \samples \eps \big\rangle .
\end{align}
Recall that $Y$ has the multinomial distribution $\Multi(N, p^*)$, and $\samples \eps = Y - \samples p^\ast$ is the deviation of $Y$ from its mean. 
Therefore, Lemma \ref{lem:multi-max} yields that on an event $\cA_2$ of probability $1 - \delta$,
\begin{align}
	\leadeq{\max_{(l, r) \in J} | \langle \V^{(l, r)}, \epsilon \rangle_{1/p^\ast} |}\\
	\lesssim {} & \Big( \max_{(l, r) \in J} \Big\| \frac{1}{\samples} \V^{(l, r)} \oslash p^\ast \Big\|_{\samples p^\ast} \Big) \sqrt{\log (|J| / \delta)} + \Big( \max_{(l, r) \in J} \Big\| \frac{1}{\samples} \V^{(l, r)} \oslash p^\ast \Big\|_{\infty} \Big) \log (|J| / \delta) . \label{eq:bern-bd}
\end{align}

To bound the two norms above, we note that by orthogonality of the \( \V^{(l, r)} \) with respect to \( \langle . , . \rangle_{1 / p^\ast} \),
\begin{align}
\label{eq:ef}
\Big\| \frac{1}{\samples} \V^{(l, r)} \oslash p^\ast \Big\|_{\samples p^\ast}^2 
= \Big\| \frac{1}{\sqrt{\samples}} \V^{(l, r)} \Big\|_{1/p^\ast}^2 
= \frac{1}{N}.
\end{align}
In addition, it holds that
\begin{align}
\Big\| \frac{1}{\samples} \V^{(l, r)} \Big\|_\infty
\le {} & \Big\| \frac{1}{\samples} \V^{(l, r)} \Big\|_F
= \Big\| \frac{1}{\samples \sqrt{\samples}} \V^{(l, r)} \oslash \sqrt{p^\ast} \Big\|_{\samples p^\ast}\\
\le {} & \frac{1}{\sqrt{\samples \pmin}} \Big\| \frac{1}{\samples} \V^{(l, r)} \Big\|_{\samples p^\ast}
= \frac{1}{N \sqrt{\pmin}}
\le \frac{1}{\sqrt{N \log(\dimone/\delta)}},
\label{eq:fm}
\end{align}
where we used \eqref{eq:ef} and that \( N \ge 12 \log(\dimone \dimtwo/\delta)/\pmin \) by assumption.
Further, we can control the cardinality of \( J \) by
\begin{equation}
\label{eq:ex}
\dimone \le |J| \le \dimone \dimtwo, 
\quad \text{and} \quad 
|J| = \sum_{(l, r) \in J}  1
\le \dimone + \dimtwo - 1 + \sum_{r=1}^{\dimtwo} \lfloor k/r \rfloor
\le 2 \dimone + k \log(\dimtwo).
\end{equation}

Combining~\eqref{eq:ev},~\eqref{eq:sq-bd},~\eqref{eq:bern-bd},~\eqref{eq:ef},~\eqref{eq:fm} and~\eqref{eq:ex}, we see that on the event \( \cA_2 \),
\begin{align}
	\langle (I - \Pi)(\eps) ,  \premle - \groundtruth \rangle
&\lesssim \|  \premle - \groundtruth \|_{p^\ast} \sqrt{\dimone + k \log(\dimtwo) } \Big( \sqrt{\frac{\log (\dimone / \delta)}{\samples}} + \frac{ \log(\dimone / \delta) }{ \sqrt{\samples \log(\dimone/\delta)}} \Big) \\
&\lesssim \|  \premle - \groundtruth \|_{p^\ast}  \sqrt{\frac{ \dimone \log(\dimone / \delta) + k \log(\dimone / \delta) \log(\dimtwo) }{\samples}} . \label{eq:proj-term}
\end{align}

\subsubsection{Bounding the second term in \eqref{eq:es}}

By Lemma~\ref{lem:null}, the image of $\Pi$ is included in the image of the adjoint map $A \mapsto \Done^\top A \Dtwo$.
Since \( A \mapsto \Done^\top (\Done^\top)^\dag A \Dtwo^\dag \Dtwo \) is the orthogonal projection onto this image, we thus have
\begin{align}
\label{eq:ey}
\langle \Pi ( \eps ) ,  \premle - \groundtruth   \rangle 
= {} &  \langle \Done^\top (\Done^\top)^\dag \Pi  ( \eps   )  \Dtwo^\dag \Dtwo,  \premle - \groundtruth   \rangle  \\
= {} &  \langle (\Done^\dag)^\top \Pi  ( \eps  )  \Dtwo^\dag, \Done ( \premle - \groundtruth) \Dtwo^\top  \rangle \\
\le {} & \big \| (\Done^\dag)^\top \Pi ( \eps   )  \Dtwo^\dag  \big\|_\infty  \,
\big\| \Done ( \premle - \groundtruth)   \Dtwo^\top \big\|_1 \label{eq:holder}
\end{align}
by H\"older's inequality.

We first consider the term $\big \| (\Done^\dag)^\top \Pi ( \eps   )  \Dtwo^\dag  \big\|_\infty$.
By the formula \eqref{eq:et} for $\Pi$ and the singular value decomposition of \( \Done \), it holds that
\begin{align}
\label{eq:eh}
(\Done^\dag)^\top \Pi ( \eps   )  \Dtwo^\dag
= {} & \sum_{(l, r) \in J^c} \Sigma^{-1}_{l, l} \Sigma^{-1}_{r, r} U_{\cdot,l} \tilde U_{\cdot,r}^\top \langle w_l \tilde w_r^\top , \Pi (\epsilon) \rangle\\
= {} & \sum_{(l, r) \in J^c} \Sigma^{-1}_{l, l} \Sigma^{-1}_{r, r} U_{\cdot,l} \tilde U_{\cdot,r}^\top \langle w_l \tilde w_r^\top , \mathcal{L} (\mathcal{L}^\top \Lambda^{-1} \mathcal{L})^{-1} \mathcal{L}^\top \Lambda^{-1} (\epsilon) \rangle\\
= {} & \sum_{(l, r) \in J^c} \Sigma^{-1}_{l, l} \Sigma^{-1}_{r, r} U_{\cdot,l} \tilde U_{\cdot,r}^\top \langle \Lambda^{-1} \mathcal{L} (\mathcal{L}^\top \Lambda^{-1} \mathcal{L})^{-1} \mathcal{L}^\top (w_l \tilde w_r^\top) , \epsilon \rangle .
\end{align}
By \eqref{eq:d} and the orthogonality of the vectors \( \{ w_l \}_{l \in [\dimone]} \) and \( \{ \tilde w_r \}_{r \in [\dimtwo]} \), we have that \( \mathcal{L}^\top (w_l \tilde w_r^\top) = e^{(l, r)} \) if \( (l, r) \in J^c \) and zero otherwise, where \( e^{(l, r)} \) denotes the coordinate vector in \( \R^{|J^c|} \) with a one in the \( (l, r) \)th component and zero in all others.
Hence, if we define $a^{(i,j)} \in \R^{|J^c|}$ for $(i, j) \in [\dimone - 1] \times [\dimtwo - 1]$ by
\begin{equation}
	\label{eq:x}
	(a^{(i,j)})_{l, r} \defn \Sigma_{l, l}^{-1} \Sigma_{r, r}^{-1} U_{i, l} \tilde U_{j, r} ,
\end{equation}
then for \( (i, j) \in [\dimone-1] \times [\dimtwo-1] \), we obtain
\begin{align}
	\label{eq:u}
\left( (\Done^\dag)^\top \Pi ( \eps   )  \Dtwo^\dag  \right)_{i, j}
= {} & \sum_{(l, r) \in J^c} (a^{(i, j)})_{l, r} \langle \Lambda^{-1} \mathcal{L} (\mathcal{L}^\top \Lambda^{-1} \mathcal{L})^{-1} e^{(l, r)} , \epsilon \rangle\\
= {} & \langle \underbrace{\frac{1}{N} \Lambda^{-1} \mathcal{L} (\mathcal{L}^\top \Lambda^{-1} \mathcal{L})^{-1} a^{(i, j)}}_{=: B^{(i, j)} }, N \epsilon \rangle . 
\end{align}
As before, Lemma~\ref{lem:multi-max} yields that on an event $\cA_3$ of probability $1 - \delta$,
\begin{align}
\Big\| (\Done^\dag)^\top \Pi ( \eps   )  \Dtwo^\dag \Big\|_\infty 
\lesssim \Big( \max_{i, j} \big\| B^{(i, j)} \big\|_{\samples p^\ast} \Big) \sqrt{\log (\dimone / \delta)} + \Big( \max_{i, j} \big\| B^{(i, j)} \big\|_{\infty} \Big) \log (\dimone / \delta) .
\end{align}

We proceed to bound \( \| B^{(i, j)} \|_{\samples p^\ast} \) and \( \| B^{(i, j)} \|_\infty \).
First,
\begin{align}
	\label{eq:w}
	\Big\| B^{(i, j)} \Big\|_{N p^\ast}^2
	= {} & \frac{1}{N} \Big\| \Lambda^{-1} \mathcal{L} (\mathcal{L}^\top \Lambda^{-1} \mathcal{L})^{-1} a^{(i, j)} \Big\|_{p^\ast}^2\\
	= {} & \frac{1}{N} (a^{(i, j)})^\top (\mathcal{L}^\top \Lambda^{-1} \mathcal{L})^{-1} \mathcal{L}^\top \Lambda^{-1} \Lambda \Lambda^{-1} \mathcal{L} (\mathcal{L}^\top \Lambda^{-1} \mathcal{L})^{-1} a^{(i, j)}\\
	= {} & \frac{1}{N} \Big\| (\mathcal{L}^\top \Lambda^{-1} \mathcal{L})^{-1/2} a^{(i, j)} \Big\|_2^2\\
	\le {} & \frac{\pmax}{N} \, \| a^{(i, j)} \|_2^2
\end{align}
by Lemma \ref{lem:gram-eig}.
Then, by definition,
\begin{align}
\big\| a^{(i, j)} \big\|_{2}^2
= \sum_{(l, r) \in J^c} \Sigma_{l, l}^{-2} \tilde \Sigma_{r, r}^{-2} U_{i, l}^2 \tilde U_{j, r}^2 
\lesssim  \frac{\dimone \dimtwo}{k} \log (\dimtwo)  ,
\end{align}
where the inequality is due to Lemma~\ref{lem:var-bd}. 
As for \( \| B^{(i, j)}  \|_\infty \), we proceed as in \eqref{eq:fm} to obtain
\begin{align}
\label{eq:ek}
\|  B^{(i, j)} \|_\infty
\le \frac{1}{\sqrt{N \pmin}} \| B^{(i, j)} \|_{\samples p^\ast}
\lesssim \sqrt{\frac{\pmax \dimone \dimtwo  \log(\dimtwo)}{N k \log(\dimone / \delta) }},
\end{align}
where we used again that by assumption, \( N \ge 12 \log(\dimone \dimtwo/\delta)/\pmin \).
Combining the above bounds yields that on the event $\cA_3$, 
\begin{align}
\big\| (\Done^\dag)^\top \Pi ( \eps   )  \Dtwo^\dag  \big\|_\infty 
\lesssim \sqrt{\frac{ \pmax \dimone \dimtwo \log( \dimone / \delta) \log (\dimtwo) }{\samples k} }. \label{eq:li-bd}
\end{align}

Next, we turn to the quantity $\big\| \Done ( \premle - \groundtruth)   \Dtwo^\top \big\|_1$. Note that for any $\theta$ such that $\Done \theta \Dtwo^\top \ge 0$, it holds
\begin{align}
\| \Done \theta \Dtwo^\top \|_1 
&= \sum_{i=1}^{\dimone-1} \sum_{j=1}^{\dimtwo-1} (\Done \theta \Dtwo^\top)_{i, j} \\
&= \sum_{i=1}^{\dimone-1} \sum_{j=1}^{\dimtwo-1} ( \theta_{i, j} + \theta_{i+1, j+1} - \theta_{i+1, j} - \theta_{i, j+1} )
= \theta_{1,1} + \theta_{\dimone, \dimtwo} - \theta_{\dimone, 1} - \theta_{1, \dimtwo} . \label{eq:l1-sum}
\end{align}
Therefore, we obtain
\begin{align}
\| \Done \groundtruth \Dtwo^\top \|_1 
= \groundtruth_{1,1} + \groundtruth_{\dimone, \dimtwo} - \groundtruth_{\dimone, 1} - \groundtruth_{1, \dimtwo}
= \log \frac{ p^\ast_{1,1} p^\ast_{\dimone, \dimtwo} }{ p^\ast_{\dimone, 1} p^\ast_{1, \dimtwo} } = \log \big( \pratio(p^\ast) \big). \label{eq:seminorm}
\end{align}
Furthermore, recall that on the event $\cA_1$, both $\groundtruth$ and $\premle$ lie in the set $\cC(Y)$ defined in~\eqref{eq:constraint}. Hence
\begin{align}
\| \Done \premle \Dtwo^\top \|_1 
&= \premle_{1,1} + \premle_{\dimone, \dimtwo} - \premle_{\dimone, 1} - \premle_{1, \dimtwo} \\
&\le \log \frac{2 Y_{1,1}}{\samples} + \log \frac{2 Y_{\dimone, \dimtwo}}{\samples} - \log \frac{2 Y_{\dimone, 1}}{3 \samples} - \log \frac{2 Y_{1,\dimtwo}}{3 \samples} \\
&= \log \frac{2 Y_{1,1}}{3 \samples} + \log \frac{2 Y_{\dimone, \dimtwo}}{3 \samples} - \log \frac{2 Y_{\dimone, 1}}{\samples} - \log \frac{2 Y_{1,\dimtwo}}{\samples} + 4 \log (3) \\
& \le \groundtruth_{1,1} + \groundtruth_{\dimone, \dimtwo} - \groundtruth_{\dimone, 1} - \groundtruth_{1, \dimtwo} + 4 \log (3) \\
& = \log \big( \pratio(p^\ast) \big) + 4 \log (3) .
\end{align}
We conclude that on the event $\cA_1$,
\begin{align}
\big\| \Done ( \premle - \groundtruth)   \Dtwo^\top \big\|_1 
\le \| \Done \premle \Dtwo^\top \|_1  + \| \Done \groundtruth \Dtwo^\top \|_1 \le 2 \log \big( \pratio(p^\ast) \big) + 4 \log (3) . \label{eq:l1-bd}
\end{align}

It then follows from~\eqref{eq:holder},~\eqref{eq:li-bd}, and~\eqref{eq:l1-bd} that on the event $\cA_1 \cap \cA_3$,
\begin{align}
\langle \Pi  ( \eps   ) ,  \premle - \groundtruth   \rangle 
\lesssim \sqrt{\frac{ \pmax \dimone \dimtwo \log( \dimone / \delta) \log (\dimtwo) }{\samples k} } \Big( \log \big( \pratio(p^\ast) \big) + 1 \Big) .
\end{align}

\subsubsection{Finishing the proof of Theorem~\ref{thm:upper}}

Combining the bounds on the two terms of~\eqref{eq:es} and applying~\eqref{eq:fg}, we obtain that on the event $ \cA_1 \cap \cA_2 \cap \cA_3$ of probability at least $1 - 4 \delta$, 
\begin{align}
	\|  \premle - \groundtruth \|_{p^\ast}^2
	\lesssim {} & \|  \premle - \groundtruth \|_{p^\ast} \sqrt{\frac{ \dimone \log(\dimone / \delta) + k \log(\dimone / \delta) \log(\dimtwo) }{\samples}}  \\
	{} & + \Big( \log \big( \pratio(p^\ast) \big) + 1 \Big) \sqrt{\frac{ \pmax \dimone \dimtwo \log( \dimone / \delta) \log (\dimtwo) }{\samples k} } 
. \label{eq:two-one}
\end{align}
Finally, by the definitions of $\premle$ and $\hat \theta$ in~\eqref{eq:est-1} and~\eqref{eq:est-2}, it holds that
\begin{align}
\KL (p^\ast, \hat p) & = \sum_{i, j} p^\ast_{i, j} \log \frac{p^\ast_{i, j}}{\hat p_{i, j}} \\
& = \sum_{i, j} p^\ast_{i, j} \big( \groundtruth_{i, j} - \hat \theta_{i, j} \big) \\
& = \sum_{i, j} p^\ast_{i, j} \big( \groundtruth_{i, j} -  \tilde \theta_{i, j} \big) + \log \sum_{i, j} e^{\tilde \theta_{i, j}} \\
& \le \langle p^\ast, \groundtruth - \tilde \theta \rangle + \sum_{i, j} e^{\tilde \theta_{i, j}} - 1 \\
& \le 2 \| \premle - \groundtruth \|_{p^\ast}^2 , \label{eq:kl-bd}
\end{align}
where the first inequality holds because $\log x \le x - 1$ and the second holds thanks to \eqref{eq:qua-approx}. 
Therefore, we conclude from~\eqref{eq:two-one} and~\eqref{eq:kl-bd} that 
\begin{align}
\label{eq:fl}
\KL (p^\ast, \hat p)
&\lesssim \frac{ \dimone \log(\dimone / \delta) + k \log(\dimone / \delta) \log(\dimtwo) }{\samples}  \\
& \quad \  + \Big( \log \big( \pratio(p^\ast) \big) + 1 \Big) \sqrt{\frac{ \pmax \dimone \dimtwo \log( \dimone / \delta) \log (\dimtwo) }{\samples k} } .
\end{align}

Balancing out the terms that depend on \( k \) yields the optimal choice
\begin{align}
\label{eq:fo}
k=  \Big( \log \big( \pratio(p^\ast) \big) + 1 \Big)^{2/3}  \Big( \frac{ \pmax \dimone \dimtwo \samples }{ \log(\dimtwo) \log(\dimone / \delta) } \Big)^{1/3} ,
\end{align}
which leads to
\begin{align}
\KL (p^\ast, \hat p)
&\lesssim \frac{ \dimone \log(\dimone / \delta) }{\samples} + (\pmax \dimone \dimtwo)^{1/3} \Big( \log \big( \pratio(p^\ast) \big) + 1 \Big)^{2/3}  
 \Big( \frac{ \log(\dimone / \delta) \log(\dimtwo) }{ \samples } \Big)^{2/3}.
\end{align}
Since the KL divergence dominates the Hellinger distance by the first inequality in \eqref{eq:hel-kl}, this completes the proof. 

\begin{remark}
	\label{rem:ml-vs-emp}
It is not hard to see that, if we choose the set $J$ in \eqref{eq:eu} instead to be the entire grid $[\dimone] \times [\dimtwo]$, then the same argument yields the rate
	\begin{equation}
		\label{eq:ac}
		\hel^2(p^\ast, \hat p) \lesssim \frac{\dimone \dimtwo \log(\dimone / \delta)}{N},
	\end{equation}
	which matches the rate of the empirical frequency matrix in Lemma~\ref{lem:emp} up to a logarithmic factor.
	In fact, the numerical experiments in Section~\ref{sec:numerics}, in particular Figure~\ref{fig:pmf_large_V_var_N}, suggest that the performance of \( \hat p \) exactly matches that of the empirical frequency matrix in this regime.
\end{remark}

\subsubsection{Proof of Lemma~\ref{lem:gram-eig}}
\label{sec:gram-eig}

Let \( B \in \R^{|J^c|} \) with \( \| B \|_2 = 1 \).
Since \( \mathcal{L} B \) is a sum of matrices that are orthonormal with respect to the standard inner product, weighted by the entries of $B$, it holds that \( \| \mathcal{L} B \|_2 = 1 \).
Hence,
\begin{align}
	\label{eq:aa}
	B^\top \mathcal{L}^\top \Lambda^{-1} \mathcal{L} B
	\ge \min_{G : \| G \|_2 = 1} G^\top \Lambda^{-1} G
	= \lambda_{\min}( \Lambda^{-1} )
	= \frac{1}{\pmax},
\end{align}
which yields the first claim. 

For the second claim, recall that $\Pi A$ is defined to be the orthogonal projection of $A$ onto the image of $\mathcal{L}$ with respect to the inner product $\langle \cdot, \cdot \rangle_{1/p^*}$. 
Thus $\Pi A = \mathcal{L} B$ where $B \in \R^{|J^c|}$ minimizes $$
\|\mathcal{L} B - A\|_{1/p^*}^2 = \langle \mathcal{L} B - A, \Lambda^{-1} (\mathcal{L} B - A) \rangle .
$$ 
The first-order optimality condition then gives the desired formula for $\Pi$.

\subsubsection{Proof of Lemma~\ref{lem:null}}
\label{sec:null}

The image of the map \( A \mapsto \Done^\top A \Dtwo \) is the orthogonal complement of the kernel of \( A \mapsto \Done A \Dtwo^\top \), which can be characterized as follows.

The matrices $\Sigma$, $U$ and $W$ in the singular value decomposition $\Done = U \Sigma W^\top$ are known~\cite{Str07} to be
\begin{equation}
\label{eq:cv}
\Sigma_{i, i} = 2 \left| \sin \left( \frac{\pi i}{2 \dimone} \right) \right|, \quad i \in [\dimone-1] ,
\end{equation}
\begin{equation}
\label{eq:cu}
U_{i, j} = \sqrt{\frac{2}{\dimone}} \sin \left( \frac{\pi i j}{\dimone} \right), \quad i, j \in [\dimone-1] ,
\end{equation}
\begin{align}
\label{eq:el}
\text{and } \quad  W_{i, j} =
\left\{
\begin{aligned}
\sqrt{ \frac{2}{\dimone}} \cos \left( \frac{\pi j (i-1/2)}{\dimone} \right), \quad {} & j \in [\dimone-1] , \, i \in [\dimone]  \\
\frac{1}{\sqrt{\dimone}}, \quad {} & j = \dimone .
\end{aligned}
\right. \qquad 
\end{align}

Fix a matrix $A$ for which $D A \Dtwo^\top = 0$. Then we have $\Sigma W^\top A \tilde W \tilde \Sigma^\top = 0$, so the matrix $W^\top A \tilde W$ has all entries equal to zero except on its last row and last column. 
Consequently, it holds that 
$$
A = W W^\top A \tilde W \tilde W^\top = \sum_{i=1}^{\dimone} w_i w_i^\top A \tilde w_{\dimtwo} \tilde w_{\dimtwo}^\top + \sum_{j=1}^{\dimtwo-1}  w_{\dimone} w_{\dimone}^\top A \tilde w_j \tilde w_j^\top .
$$
Hence, the orthogonal complement of the kernel of \( A \mapsto \Done A \Dtwo^\top \) is spanned by the matrices \( \{ w_l \tilde w_r^\top : (l, r) \in [\dimone - 1] \times [\dimtwo - 1] \} \).
By the definition of \( \Pi \) as the projection onto the span of \( \{ w_i \tilde w_j^\top : (i, j) \in J^c \} \), its image is contained in the kernel of \( A \mapsto \Done A \Dtwo^\top \).

%
%
%


\subsubsection{Proof of Lemma~\ref{lem:var-bd}}
\label{sec:var-bd}

This result can be easily obtained from the proof of Lemma~10 of \cite{HutMaoRigRob19}, but we provide a complete proof for the reader's convenience. 

We start with the first bound in the lemma.  Without loss of generality, assume  that \( \dimone \) is odd, so \( \dimone - 1 \) is even. 
Note that because of the symmetry
\begin{equation}
\label{eq:da}
\sin \left( \frac{\pi i j}{\dimone} \right)
= \sin \left( \frac{\pi j (\dimone - i)}{\dimone} \right), \quad i = 1, \dots, \dimone-1,
\end{equation}
it is enough to consider \( i = 1, \dots, \frac{\dimone-1}{2} \).
We make use of the following inequalities to control the \( \sin \) terms involved:
\begin{alignat}{2}
\label{eq:di}
| \sin(x) | \leq {} & 1, \quad && \text{for all } x \in \R;\\
\sin(x) \leq {} & x, \quad && \text{for } x \in [0, \infty);\\
\sin(x) \geq {} & \frac{2}{\pi} x \geq \frac{1}{2} x, \quad && \text{for } x \in [0, \frac{\pi}{2}].
\end{alignat}

Plugging in the entries of $\Sigma$ and $U$ as stated in \eqref{eq:cv} and \eqref{eq:cu}, respectively, yields
\begin{align}
\label{eq:dv}
\sum_{(l, r) \in J^c} \Sigma_{l, l}^{-2} \tilde \Sigma_{r, r}^{-2} U_{i, l}^2 \tilde U_{j, r}^2
= {} & \sum_{(l, r) \in J^c} \frac{4 \sin \left( \frac{\pi i l}{\dimone} \right)^2 \sin \left( \frac{\pi j r}{\dimtwo} \right)^2}{16 \dimone \dimtwo \sin \left( \frac{\pi l}{2 \dimone} \right)^2 \sin \left( \frac{\pi r}{2 \dimtwo} \right)^2}\\
\lesssim {} & \frac{1}{\dimone \dimtwo} \sum_{(l, r) \in J^c} \frac{\dimone^2 \dimtwo^2}{l^2 r^2}\\
\lesssim {} & \dimone \dimtwo \sum_{r=1}^{\dimtwo} \left( \frac{1}{r^2} \sum_{l = \lceil k/r \rceil}^{\dimone} \frac{1}{l^2} \right)\\
\lesssim {} & \dimone \dimtwo \sum_{r=1}^{\dimtwo} \left( \frac{1}{r^2} \sum_{l = \lceil k/r \rceil + 1}^{\dimone} \frac{1}{l^2}\right) + \dimone \dimtwo \sum_{r=1}^{\dimtwo} \frac{1}{r^2} \frac{1}{\lceil k/r \rceil^2}\\
\lesssim {} & \dimone \dimtwo \sum_{r=1}^{\dimtwo} \frac{1}{r^2} \frac{r}{k} + \dimone \dimtwo \sum_{r=1}^{k} \frac{1}{k^2} + \dimone \dimtwo \sum_{r=k+1}^{\dimtwo} \frac{1}{r^2} \\
\lesssim {} & \frac{\dimone \dimtwo}{k} \log (\dimtwo) + \frac{\dimone \dimtwo}{k} + \frac{\dimone \dimtwo}{k} \lesssim \frac{\dimone \dimtwo}{k} \log (\dimtwo) , \label{eq:k-split}
\end{align}
where we have twice used the bound $\sum_{r=k+1}^\infty \frac{1}{r^2} \le \frac{1}{k}$ for any $k \ge 1$.


\subsection{Proof of Theorem \ref{thm:upper-cts}}
\label{sec:upper-cts-proof}

	With the notation introduced 
	in Section~\ref{sec:est-cts}, 
	we define the piecewise constant density
	\begin{equation}
		\label{eq:ge}
		\bar \density(x) \defn \dimtradeoff^2 \pgt_{i, j} 
		= \dimtradeoff^2 \int_{S_{i, j}} \densitygt(x) \, \ud x , \quad x \in S_{i, j}, \, i, j  \in [\dimtradeoff].
	\end{equation}
	By the triangle inequality for the Hellinger distance, we can estimate
	\begin{equation}
		\label{eq:gt}
		\hel^2(\hat \density, \densitygt) \le 2 \hel^2(\hat \density, \bar \density) + 2 \hel^2(\bar \density, \densitygt),
	\end{equation}
	and we proceed to bound the two quantities on the right-hand side of \eqref{eq:gt}.

	For the first term on the right-hand side of \eqref{eq:gt}, we have
	\begin{align}
		\label{eq:gf}
		\hel^2(\hat \density, \bar \density)
		= {} & \int_{[0,1]^2} \Big( \sqrt{\hat \density(x)} - \sqrt{\bar \density (x)} \Big)^2 \, \ud x\\
		= {} & \sum_{i, j = 1}^n \frac{1}{\dimtradeoff^2} \Big( \sqrt{\dimtradeoff^2 \hat p_{i, j}} - \sqrt{\dimtradeoff^2 \pgt_{i, j}} \, \Big)^2\\
		= {} & \hel^2(\hat p, \pgt) . 
	\end{align}
	By assumption \eqref{eq:fz} and definition \eqref{eq:fy}, we have that $ \pmin \ge \dmin / \dimtradeoff^2 $. Hence if $\samples \ge \frac{ 12 \dimtradeoff^2 \log (n^2/\delta) }{ \dmin }$, 
	then the results for the estimator \( \hat p \) in Theorem \ref{thm:upper} lead to 
	\begin{equation}
		\label{eq:gu}
		\hel^2 ( \hat \density, \bar \density ) \lesssim 
		\frac{ \dimtradeoff \log(\dimtradeoff / \delta) }{\samples } 
		+  (\pmax \, \dimtradeoff^2)^{1/3} \Big( \log \big( \pratio(p^\ast) \big) + 1 \Big)^{2/3}  
		\Big( \frac{ \log(\dimtradeoff / \delta) \log(\dimtradeoff) }{ \samples } \Big)^{2/3} 
	\end{equation}
	with probability at least $1- 4 \delta$. 
Moreover, recall definition \eqref{eq:ratio} and note that 
	\begin{equation}
		\label{eq:ga}
		\pgt_i
		\le 
		\frac{\dmax}{\dimtradeoff^2} 
		\quad \text{ and } \quad 
		\pratio( \pgt ) \le \frac{ \dmax^2 }{ \dmin^2 } . 
	\end{equation}
	It then follows that 
\begin{equation}
\label{eq:t1}
	\hel^2 ( \hat \density, \bar \density ) \lesssim \frac{ \dimtradeoff \log( \dimtradeoff / \delta ) }{\samples}  + 	( \dmax )^{1/3}  \Big( \log \Big( \frac{ \dmax }{ \dmin } \Big) + 1 \Big)^{2/3} 
	\Big( \frac{ \log( \dimtradeoff / \delta  ) \log( \dimtradeoff ) }{ \samples } \Big)^{2/3} . 
\end{equation}
	

	To bound the second term on the right-hand side of \eqref{eq:gt}, note that by the mean value theorem for integrals and the continuity of \( \densitygt \), for all \( i , j \in [\dimtradeoff] \), there exist \( \zeta_{i, j} \) such that
	\begin{equation}
		\label{eq:gh}
		\bar \density(x)  = \dimtradeoff^2 \int_{S_{i, j}} \densitygt(y) \, \ud y = \densitygt(\zeta_{i, j}).
	\end{equation}
	Note that for any $a, b > 0$, it holds that 
		\begin{equation}
			\label{eq:gi}
			\big| \sqrt{a} - \sqrt{b} \, \big|
			\le \frac{ | a - b | }{ \sqrt{a} \lor \sqrt{b} } .  
		\end{equation}
	Moreover, assumptions \eqref{eq:go} and \eqref{eq:fv} imply that 
		\begin{equation}
		\label{eq:gs}
		| \densitygt(x) - \densitygt(y) | \le \constholder \| x - y \|_2^{\tilde \expholder} , \quad x, y \in [0,1]^2 ,
		\end{equation}
		where we recall $\tilde \expholder = \expholder \land 1$. 
	Combining the above facts, we obtain 
	\begin{align}
		\label{eq:gg}
		\hel^2(\bar \density, \densitygt)
		= {} & \sum_{i, j = 1}^n \int_{S_{i, j}} \Big( \sqrt{\densitygt(x)} - \sqrt{\densitygt(\zeta_{i, j})} \, \Big)^2 \, \ud x\\
		\le {} & \frac{1}{\dmin} \sum_{i, j = 1}^n \int_{S_{i, j}} \Big( \densitygt(x) - \densitygt(\zeta_{i, j}) \Big)^2 \, \ud x\\
		\le {} & \frac{ \constholder^2 }{\dmin} \sum_{i, j = 1}^n \int_{S_{i, j}} \diam(S_{i, j})^{2 \tilde \expholder}
		= \frac{ 2 \constholder^2 }{\dmin} \dimtradeoff^{-2 \tilde \expholder} . \label{eq:t2}
	\end{align}

	Plugging inequalities \eqref{eq:t1} and \eqref{eq:t2} into \eqref{eq:gt}, we conclude that for $\samples \ge \frac{ 12 \dimtradeoff^2 \log (\dimtradeoff^2/\delta) }{ \dmin }$, 
	\begin{equation}
		\label{eq:gj}
		\hel^2(\hat \density, \densitygt) \lesssim 
		\frac{ \dimtradeoff \log( \dimtradeoff / \delta ) }{\samples}  +  ( \dmax )^{1/3} \Big( \log \Big( \frac{ \dmax }{ \dmin } \Big) + 1 \Big)^{2/3} 
		\Big( \frac{ \log( \dimtradeoff / \delta  ) \log( \dimtradeoff ) }{ \samples } \Big)^{2/3} 
		+ \frac{ \constholder^2 }{\dmin} \dimtradeoff^{-2 \tilde \expholder} 
	\end{equation}
	with probability $1 - 4 \delta$. 
	Setting
	\begin{equation}
		\label{eq:gk}
		\dimtradeoff = \Big\lfloor \Big( \frac{\constholder^2 \samples}{\dmin}  \Big)^{1/(2 \tilde \expholder + 1)} \land \Big( \frac{\dmin \samples }{ 24 \log ( \frac{ \dmin \samples}{ 12 \delta } ) } \Big)^{1/2} \Big\rfloor 
	\end{equation}
	then leads to
	the bound
	\begin{equation}
		\label{eq:ub1}
		\hel^2(\hat \density, \densitygt) 
		\lesssim \frac{ \log(\constholder \samples / \delta) }{ \dmin^{1/(2 \tilde \expholder + 1)} }
		\frac{ \constholder^{2/(2 \tilde \expholder + 1)} }{ \samples^{2 \tilde \expholder/(2 \tilde \expholder + 1)} }  + ( \dmax )^{1/3} \Big( \log \Big( \frac{ \dmax }{ \dmin } \Big) + 1 \Big)^{2/3} 
		\Big( \frac{ \log( \constholder \samples / \delta  ) \log( \constholder \samples ) }{ \samples } \Big)^{2/3} ,
	\end{equation}
	for $\expholder > 0.5 $ and $\samples \gtrsim \big[ \frac{ \constholder^4 }{ \dmin^{2 \tilde \beta+3} } \log^{2 \tilde \beta +1} ( \frac{ \constholder }{\dmin \delta } ) \big]^{ \frac{1}{ 2 \tilde \expholder - 1} } , $  
	and 
		\begin{align}
			\hel^2(\hat \density, \densitygt) &\lesssim \Big[ \frac{\dmin  \log( \samples / \delta ) }{ \samples } \Big]^{1/2}
			+ ( \dmax )^{1/3} \Big( \log \Big( \frac{ \dmax }{ \dmin } \Big) + 1 \Big)^{2/3} 
				\Big( \frac{ \log( \samples / \delta  ) \log( \samples ) }{ \samples } \Big)^{2/3} \\
			& \quad + \frac{ \constholder^2 }{\dmin} \Big[ 
				\frac{ \log( \samples / \delta ) }{ \dmin \samples } \Big]^{\tilde \expholder} ,
						\label{eq:ub2}
		\end{align}
		for $0 < \beta \le 0.5$, where the hidden constants depend on $\beta$. 
		Choosing $\delta = 1/(4 \samples^4)$ completes the proof. 

\subsection{Proof of Theorem \ref{thm:lower}}


We prove the theorem by treating the two terms $\dimone/\samples$ and $1/\samples^{2/3}$ separately. 

\subsubsection{The first term \texorpdfstring{$\dimone/\samples$}{n1/N0}}

Without loss of generality, we assume that $8$ divides $\dimone$. Let $\ham$ denote the Hamming distance between two binary vectors. By the Gilbert-Varshamov bound (see, for example, \cite[Lemma~4.10]{Mas07}), there exists a set $\{ w^{(k)} \}_{k=1}^M$ of points in $\{0, 1\}^{\dimone}$ such that
\begin{itemize} 
\item
$\ham(0, w^{(k)} ) = \dimone/4$, 

\item
$\ham( w^{(k)} , w^{(\ell)} ) \ge \dimone/8$ for all distinct $k, \ell \in [M]$, and 

\item
$\log(M) \ge \dimone/30$.
\end{itemize} 

For $\delta \in [0, 1]$ and each $k \in [M]$, let us define a density $p^{(k)}$ on $[\dimone] \times [\dimtwo]$ by
$$
p^{(k)}_{i, j} = 
\frac{ 4 ( 1 + \delta w^{(k)}_i ) }{(4 + \delta) \dimone \dimtwo}  \quad \text{ for } (i, j) \in [\dimone] \times [\dimtwo].
$$
Note that each $p^{(k)}$ is indeed a density because $\sum_{i} w^{(k)}_i  = \dimone/4$ and thus
$$
\sum_{i, j} p^{(k)}_{i, j} = \sum_{i, j} \frac{ 4 ( 1 + \delta w^{(k)}_i ) }{(4 + \delta) \dimone \dimtwo} 
= \frac 4{4+\delta} + \frac{ 4 \delta  }{(4 + \delta) \dimone} \sum_{i} w^{(k)}_i  
= 1.
$$
Also, each $p^{(k)}$ is totally positive because it has constant rows and thus $p^{(k)}_{i, j} p^{(k)}_{i+1, j+1} = p^{(k)}_{i, j+1} p^{(k)}_{i+1, j}$.

Furthermore, since $\delta \in [0, 1]$, we see that $\frac{4}{5 \dimone \dimtwo} \le p^{(k)}_{i, j} \le \frac{8}{5 \dimone \dimtwo}$. 
The relation~\eqref{eq:hel-kl} yields 
\begin{align}
\KL(p^{(k)}, p^{(\ell)}) \le 5 \hel^2( p^{(k)}, p^{(\ell)} ) . \label{eq:dist-eq}
\end{align}
Note that $p^{(k)}_{i, j}$ can only take two possible values $\frac{4}{(4 + \delta) \dimone \dimtwo}$ or $\frac{4 (1 + \delta)}{(4 + \delta) \dimone \dimtwo}$, so $\Big( \sqrt{p^{(k)}_{i, j}} - \sqrt{p^{(\ell)}_{i, j}} \, \Big)^2$ is either $0$ or $\frac{4 ( \sqrt{ 1 + \delta } - 1 )^2 }{ (4 + \delta) \dimone \dimtwo }$. 
Therefore, we have 
\begin{align}
\hel^2( p^{(k)}, p^{(\ell)} ) 
= \sum_{i \in [\dimone], \, j \in [\dimtwo]} \Big( \sqrt{p^{(k)}_{i, j}} - \sqrt{p^{(\ell)}_{i, j}} \, \Big)^2 = \ham( w^{(k)}, w^{(\ell)} ) \frac{4 ( \sqrt{ 1 + \delta } - 1 )^2 }{ (4 + \delta) \dimone } . \label{eq:hel-exp}
\end{align}
Together with the condition $\dimone/8 \le \ham( w^{(k)} , w^{(\ell)} ) \le \dimone$ for $k \ne \ell$, relations~\eqref{eq:dist-eq} and~\eqref{eq:hel-exp} yield 
\begin{align}
\hel^2( p^{(k)}, p^{(\ell)} )  \ge \frac{ ( \sqrt{ 1 + \delta } - 1 )^2 }{2 (4 + \delta) } \quad \text{and} \quad \KL ( p^{(k)} , p^{(\ell)} ) \le \frac{20 ( \sqrt{ 1 + \delta } - 1 )^2 }{ (4 + \delta) } . \label{eq:kl-ul}
\end{align}
Additionally, since the KL divergence tensorizes, if we let $p^{\otimes \sample}$ denote the distribution of $\sample$ independent observations sampled according to the density $p$ on $[\dimone] \times [\dimtwo]$, then
\begin{align}
\KL \big( ( p^{(k)} )^{ \otimes \sample } , ( p^{(\ell)} )^{ \otimes \sample } \big) = \sample \, \KL ( p^{(k)} , p^{(\ell)} ) \le \sample \frac{20 ( \sqrt{ 1 + \delta } - 1 )^2 }{ (4 + \delta) } . \label{eq:n-kl}
\end{align}

For a sufficiently small constant $c_1>0$, choose $\delta \in [0, 1]$ so that $\sample \frac{20 ( \sqrt{ 1 + \delta } - 1 )^2 }{ (4 + \delta) } = c_1  \dimone \le 0.1 \log(M)$. We can apply \cite[Theorem~2.5]{Tsy09} together with~\eqref{eq:kl-ul} and~\eqref{eq:n-kl} to obtain that
$$
\inf_{\hat p} \sup_{p^\ast \, \mtp } \p_{ ( p^\ast )^{ \otimes \sample } } \Big\{ \hel^2( \hat p, p^\ast ) \ge c_2 \frac{\dimone}{\sample} \Big\} 
\ge 0.1 .
$$

\subsubsection{The second term \texorpdfstring{$1/\samples^{2/3}$}{1/N0\textasciicircum(2/3)}}

We turn to the second term in the lower bound. Consider positive integers $k_1 \le \dimone$ and $k_2 \le \dimtwo$ such that $4$ divides $k_1 k_2$, and $k_i$ divides $n_i$ for $i = 1,2$, without loss of generality. If $k_i$ does not divide $n_i$, with minor revision, the proof works on a sub-grid $[\dimone'] \times [\dimtwo']$ where $k_i$ divides $n_i'$ and $n_i/2 \le n_i' \le n_i$. Thus we make these mild assumptions to ease the notation.


The strategy of proving the lower bound is based on constructing an appropriate packing of supermodular log-densities, which correspond to totally positive densities. 
By the Gilbert-Varshamov bound~\cite[Lemma~4.10]{Mas07} again, we obtain a set $\{\tau^{(\ell)} \}_{\ell=1}^M$ of matrices in $\{0, 1\}^{k_1 \times k_2}$ such that
\begin{itemize} 
\item
$\ham(0, \tau^{(\ell)} ) = k_1 k_2 /4$, 

\item
$\ham( \tau^{(\ell)} , \tau^{(r)} ) \ge k_1 k_2/8$ for all distinct $\ell, r \in [M]$, and 

\item
$\log(M) \ge k_1 k_2/30$.
\end{itemize} 

For each $\tau^{(\ell)}$, we need to carefully define a log-density $\theta^{(\ell)}\in \R^{\dimone \times \dimtwo}$ that is supermodular and amenable to distance calculation. To that end, we have the following construction which simplifies computation later. 
For $i\in[\dimone]$ and $j\in[\dimtwo]$, define $u_i \defn \lceil i k_1/\dimone \rceil \in [k_1]$ and $v_j \defn \lceil j k_2/\dimtwo \rceil \in [k_2]$.
Moreover, for any $\delta \in [0, 1/6]$ and $(i, j) \in [\dimone] \times [\dimtwo]$, we define $\tilde \delta_{u_i, v_j} \in [\delta, 3 \delta] \subset [0, 1/2]$ so that
\begin{align}
\exp \Big( \frac{u_iv_j}{k_1k_2} +  \frac{  \tilde \delta_{u_i, v_j} }{k_1k_2} \Big)
- \exp \Big( \frac{u_iv_j}{k_1k_2} \Big)
= \exp \Big( 1 +  \frac{ \delta }{k_1k_2} \Big) - \exp(1) . \label{eq:deltaij}
\end{align}
To see why $\tilde \delta_{u_i, v_j}$ is properly defined in the range $[\delta, 3 \delta]$, first note that 
the quantity $\tilde \delta_{u_i, v_j}$ is larger for smaller $u_i v_j \in [k_1 k_2]$. Hence it suffices to check that there exists $\delta' \in [\delta, 3 \delta]$ such that
\begin{align}
\exp \Big( \frac{  \delta' }{k_1k_2} \Big) - \exp(0)
= \exp \Big( 1 +  \frac{ \delta }{k_1k_2} \Big) - \exp(1) .
\end{align}
This follows from 
that for $x \in [0,1/6]$,
\begin{equation}
	\label{eq:hp}
	\exp(x) - \exp(0) \le \exp(1+x) - \exp(1) \le \exp(3x) - \exp(0) .
\end{equation}

With $\tau^{(\ell)}_{u_i,v_j}$ chosen earlier and $\tilde \delta_{u_i, v_j}$ defined in~\eqref{eq:deltaij}, we consider the quantity
\begin{align}
\tilde \theta^{(\ell)}_{i, j} \defn \frac{u_iv_j}{k_1k_2} +  \tau^{(\ell)}_{u_i,v_j} \frac{ \tilde \delta_{u_i, v_j} }{k_1k_2} , \label{eq:theta-def-1} 
\end{align}
and further define the log-density
\begin{align}
\theta^{(\ell)}_{i, j} &\defn  \tilde \theta^{(\ell)}_{i, j}  - \log \sum_{s \in [\dimone], \, t \in [\dimtwo]} \exp \big( \tilde \theta^{(\ell)}_{i, j}  \big) \\
&= \frac{u_iv_j}{k_1k_2} +  \tau^{(\ell)}_{u_i,v_j} \frac{ \tilde \delta_{u_i, v_j} }{k_1k_2}  - \log \underbrace{\sum_{s \in [\dimone], \, t \in [\dimtwo]} \exp \Big( \frac{u_sv_t}{k_1k_2} + \tau^{(\ell)}_{u_s,v_t} \frac{ \tilde \delta_{u_s, v_t} }{k_1k_2}  \Big)}_{=: \, \mathfrak{N}} . \label{eq:theta-def-2} 
\end{align}
Finally, the density $p^{(\ell)}$ is defined by $p^{(\ell)}_{i, j} \defn \exp( \theta^{(\ell)}_{i, j} )$.

Note that the normalization factor $\mathfrak{N}$ in~\eqref{eq:theta-def-2} guarantees that $\sum_{i, j} \exp(\theta^{(\ell)}_{i, j}) = 1$, so $p^{(\ell)}$ is indeed a density. Moreover, crucial to our computation later, the normalization factor in fact does not depend on $\ell \in [M]$ thanks to the definition of $\tilde \delta_{u_i, v_j}$ in~\eqref{eq:deltaij}; namely,
\begin{align}
\mathfrak{N} &= \sum_{s \in [\dimone], \, t \in [\dimtwo]} \exp \Big( \frac{u_sv_t}{k_1k_2} + \tau^{(\ell)}_{u_s,v_t} \frac{ \tilde \delta_{u_s, v_t} }{k_1k_2}  \Big) \\
& = \sum_{s \in [\dimone], \, t \in [\dimtwo]} \exp \Big( \frac{u_sv_t}{k_1k_2} \Big) + \sum_{ (s, t): \, \tau^{(\ell)}_{u_s,v_t} = 1 } \Big[ \exp \Big( \frac{u_sv_t}{k_1k_2} + \frac{ \tilde \delta_{u_s, v_t} }{k_1k_2}  \Big) - \exp \Big( \frac{u_sv_t}{k_1k_2} \Big) \Big] \\
& = \sum_{s \in [\dimone], \, t \in [\dimtwo]} \exp \Big( \frac{u_sv_t}{k_1k_2} \Big) + \Big| \Big\{ (s, t): \, \tau^{(\ell)}_{u_s,v_t} = 1 \Big\} \Big| \cdot \Big[ \exp \Big( 1 +  \frac{ \delta }{k_1k_2} \Big) - e  \Big] \\
& = \sum_{s \in [\dimone], \, t \in [\dimtwo]} \exp \Big( \frac{u_sv_t}{k_1k_2} \Big) + \frac{\dimone \dimtwo}{4} \Big[ \exp \Big( 1 +  \frac{ \delta }{k_1k_2} \Big) - e  \Big] 
\end{align}
where the last equality follows from that $\ham(0, \tau^{(\ell)} ) = k_1 k_2 /4$ and the definitions of $u_s$ and $v_t$. 

Next, we check that $\theta^{(\ell)}$ is supermodular, so that $p^{(\ell)}$ is totally positive. Since $\tilde \theta^{(\ell)} \in \R^{\dimone \times \dimtwo}$ defined in~\eqref{eq:theta-def-1} is equal to $\theta^{(\ell)}$ plus a common constant on each entry, it suffices to check that $\tilde \theta^{(\ell)}_{i, j} + \tilde \theta^{(\ell)}_{i+1,j+1}-\tilde \theta^{(\ell)}_{i,j+1}-\tilde \theta^{(\ell)}_{i+1,j} \geq 0$. 
There are two cases:
\begin{enumerate}
\item 
If $u_i = u_{i+1}$ or $v_j = v_{j+1}$, then we have, respectively, either $\tilde \theta^{(\ell)}_{i, j} = \tilde \theta^{(\ell)}_{i+1,j}, \,  \tilde \theta^{(\ell)}_{i,j+1}=\tilde \theta^{(\ell)}_{i+1,j+1}$ or $\tilde \theta^{(\ell)}_{i, j} = \tilde \theta^{(\ell)}_{i,j+1}, \,  \tilde \theta^{(\ell)}_{i+1,j}=\tilde \theta^{(\ell)}_{i+1,j+1}$. In both cases, the difference above is $0$.

\item 
Otherwise, we have $u_{i+1} = u_i + 1$ and $v_{j+1} = v_j + 1$. Then it holds
\begin{align}
&\quad \  \tilde \theta^{(\ell)}_{i, j} + \tilde \theta^{(\ell)}_{i+1,j+1}-\tilde \theta^{(\ell)}_{i,j+1}-\tilde \theta^{(\ell)}_{i+1,j}  \\
&= \frac{u_iv_j + (u_i+1)(v_j+1) - u_i(v_j+1)-(u_i+1)v_j}{k_1k_2} \\
&\quad \  + \frac{\tau^{(\ell)}_{u_i,v_j} \tilde \delta_{u_i, v_j} +\tau^{(\ell)}_{u_{i+1},v_{j+1}} \tilde \delta_{u_{i+1}, v_{j+1}} -\tau^{(\ell)}_{u_i,v_{j+1}} \tilde \delta_{u_i, v_{j+1}} -\tau^{(\ell)}_{u_{i+1},v_j} \tilde \delta_{u_{i+1}, v_j} }{k_1k_2} \\
& \geq  \frac1{k_1k_2} - \frac {6\delta} {k_1k_2}  \geq 0 ,
\end{align}
\end{enumerate}
since $\tilde \delta_{u_i, v_j} \le 3 \delta \le 1/2$. 


Having verified that each $\theta^{(\ell)}$ is a supermodular log-density, we proceed to study $\hel^2 (\theta^{(\ell)}, \theta^{(r)})$ for distinct $\ell, r \in [M]$.  
Since the normalization term $\mathfrak{N}$ does not depend on the index $\ell$, definition~\eqref{eq:theta-def-2} yields
\begin{align}
| \theta^{(\ell)}_{i, j} -  \theta^{(r)}_{i, j} | = 
\big| \tau^{(\ell)}_{u_i,v_j} - \tau^{(r)}_{u_i,v_j} \big| \frac{ \tilde \delta_{u_i, v_j} }{k_1k_2} \le \frac 12 . \label{eq:term-bd}
\end{align}
By the definition of the Hellinger distance, it holds that
\begin{align}
\hel^2( p^{(\ell)} , p^{(r)} ) &= \sum_{i \in [\dimone], \, j \in [\dimtwo]} \Big( \sqrt{p^{(\ell)}_{i, j}} - \sqrt{p^{(r)}_{i, j}} \, \Big)^2 \\ 
&= \sum_{i \in [\dimone], \, j \in [\dimtwo]} \Big( \exp( \theta^{(\ell)}_{i, j}/2 ) - \exp( \theta^{(r)}_{i, j}/2 ) \Big)^2 \\
&= \sum_{i \in [\dimone], \, j \in [\dimtwo]} p^{(\ell)}_{i, j} \Big( 1 - \exp \big( (\theta^{(r)}_{i, j} - \theta^{(\ell)}_{i, j} )/2 \big)  \Big)^2 .
\end{align}
Using the approximation $x^2/2 \le (1 - e^x)^2 \le 2 x^2$ for $|x| \le 1/4$, we obtain
\begin{align}
\frac 18 \sum_{i \in [\dimone], \, j \in [\dimtwo]} p^{(\ell)}_{i, j} \big( \theta^{(r)}_{i, j} - \theta^{(\ell)}_{i, j} \big)^2 \le
\hel^2( p^{(\ell)} , p^{(r)} ) \le
\frac 12 \sum_{i \in [\dimone], \, j \in [\dimtwo]} p^{(\ell)}_{i, j} \big( \theta^{(r)}_{i, j} - \theta^{(\ell)}_{i, j} \big)^2 .
\end{align}
Furthermore, it is easily seen from~\eqref{eq:theta-def-2} that
$
| \theta^{(\ell)}_{i, j} - \theta^{(\ell)}_{i,' j'} | \le 1.5
$,
so that $1/5 \le p^{(\ell)}_{i, j} / p^{(\ell)}_{i,' j'} \le 5$ for any $(i, j), (i', j') \in [\dimone] \times [\dimtwo]$. As a result, we obtain $\frac{1}{5 \dimone \dimtwo} \le p^{(\ell)}_{i, j} \le \frac{5}{\dimone \dimtwo}$ and thus
\begin{align}
\frac {1}{40 \dimone \dimtwo} \big\| \theta^{(\ell)} - \theta^{(r)} \big\|_2^2 
\le \hel^2( p^{(\ell)} , p^{(r)} ) \le
\frac{5}{2\dimone \dimtwo} \big\| \theta^{(\ell)} - \theta^{(r)} \big\|_2^2 . \label{eq:hel-l2-bd}
\end{align}
In addition, it follows from~\eqref{eq:hel-kl} that
\begin{align}
\KL( p^{(\ell)} , p^{(r)} ) \le 11 \, \hel^2( p^{(\ell)} , p^{(r)} ) .  \label{eq:kl-hel-bd}
\end{align}

It remains to study $\| \theta^{(\ell)} - \theta^{(r)} \|_2^2$. To this end, we obtain from~\eqref{eq:theta-def-2} that
\begin{align}
\sum_{i \in [\dimone], j \in [\dimtwo]}  (\theta^{(\ell)}_{i, j} - \theta^{(r)}_{i,j})^2  
&= \sum_{i \in [\dimone], j \in [\dimtwo]}  \big( \tau^{(\ell)}_{u_i,v_j} - \tau^{(r)}_{u_i,v_j} \big)^2 \Big( \frac{ \tilde \delta_{u_i, v_j} }{k_1k_2} \Big)^2 \\
&= \sum_{u \in[k_1], v \in[k_2]} \frac{\dimone \dimtwo}{k_1 k_2} \big( \tau^{(\ell)}_{u,v} - \tau^{(r)}_{u,v} \big)^2 \frac{ \tilde \delta_{u, v}^2 }{k_1^2 k_2^2} .
\end{align}
Since $\tilde \delta_{u, v} \in [\delta, 3 \delta]$, we have the bounds
\begin{align}
\frac{ \delta^2 \dimone \dimtwo}{k_1^3 k_2^3}  \sum_{u \in[k_1], v \in[k_2]}  \big( \tau^{(\ell)}_{u,v} - \tau^{(r)}_{u,v} \big)^2 
\le \| \theta^{(\ell)} - \theta^{(r)} \|_2^2 
\le \frac{ 9 \delta^2 \dimone \dimtwo}{k_1^3 k_2^3}  \sum_{u \in[k_1], v \in[k_2]}  \big( \tau^{(\ell)}_{u,v} - \tau^{(r)}_{u,v} \big)^2 .
\end{align}
By the construction of the packing $\{\tau^{(\ell)}\}_{\ell \in [M]}$,
$$
\frac{ k_1 k_2 }{ 8 } \le \sum_{u \in[k_1], v \in[k_2]}  \big( \tau^{(\ell)}_{u,v} - \tau^{(r)}_{u,v} \big)^2 = \ham(\tau^{(\ell)}, \tau^{(r)} ) \le k_1 k_2 .
$$
Therefore, combining the above bounds yields that
\begin{align}
\frac{ \delta^2 \dimone \dimtwo}{8 k_1^2 k_2^2}  
\le \| \theta^{(\ell)} - \theta^{(r)} \|_2^2 
\le \frac{ 9 \delta^2 \dimone \dimtwo}{k_1^2 k_2^2}  .
\end{align}
This together with~\eqref{eq:hel-l2-bd} implies that
\begin{align}
\frac{ \delta^2 }{320 k_1^2 k_2^2}  
\le \hel^2( p^{(\ell)} , p^{(r)} ) 
\le \frac{ 45 \delta^2}{2 k_1^2 k_2^2}  .  \label{eq:hel-ul-bd}
\end{align}

To complete the proof, we may choose $\delta = c_1 \big( \frac{k_1^3 k_2^3}{\sample} \big)^{1/2} \in [0, 1/6]$ for a sufficiently small constant $c_1>0$, provided that $k_1^3 k_2^3 \lesssim \sample$. Then the bounds~\eqref{eq:hel-ul-bd} and~\eqref{eq:kl-hel-bd} combined imply that 
$$
\KL \big( ( p^{(k)} )^{ \otimes \sample } , ( p^{(\ell)} )^{ \otimes \sample } \big) = \sample \, \KL( p^{(\ell)}, p^{(r)} ) \le c_2 k_1 k_2 \le 0.1 \log(M) .
$$
Therefore, we can apply \cite[Theorem~2.5]{Tsy09} together with the lower bound in~\eqref{eq:hel-ul-bd} to see that
$$
\inf_{\hat p} \sup_{p^\ast \, \mtp } \p_{ ( p^\ast )^{ \otimes \sample } } \Big\{ \hel^2( \hat p , p^* )  \ge c_2 \frac{k_1 k_2}{\sample} \Big\} 
\ge 0.1 ,
$$
where we continue to use the notation $\hat \theta_{i,j} = \log \hat p_{i, j}$ and $\theta^\ast_{i,j} = \log p^\ast_{i, j}$. 

Note that $k_1 k_2$ needs to be chosen so that $\delta = c_1 \big( \frac{k_1^3 k_2^3}{\sample} \big)^{1/2} \le 1/6$. Hence if $\sample \lesssim \dimone^3 \dimtwo^3$, then we choose $k_1 k_2 \asymp \sample^{1/3}$ to obtain the lower bound of order $\sample^{-2/3}$. If $\sample \gtrsim \dimone^3 \dimtwo^3$, then we choose $k_1 = \dimone$ and $k_2 = \dimtwo$ to obtain the lower bound of order $\frac{\dimone \dimtwo}{\sample}$.

\subsection{Proof of Theorem \ref{thm:lower-cts}}

We first set up the proof for smooth densities, and then prove the theorem for each regime of $\expholder$. 

\subsubsection{Differential characterization and setup}

We begin by stating a short lemma that yields a condition for total positivity in terms of the derivatives of a density.

\begin{lemma}
	\label{lem:diff-char}
	A function \( f \in C^2([0,1]^2) \) fulfills
	\begin{equation}
		\label{eq:gz}
		f(w, z) + f(x, y) \ge f(x,z) + f(w, y), \quad \text{for all } 0 \le x \le w \le 1, \, 0 \le y \le z \le 1
	\end{equation}
	if and only if
	\begin{equation}
		\label{eq:ha}
		\partial_1 \partial_2 f(x, y) \ge 0, \quad \text{for all } x, y \in [0,1].
	\end{equation}
	Moreover, if \( \density > 0 \) is a probability density in \( C^2([0,1]^2) \), then \( \density \) is totally positive if and only if \( \log \density \) fulfills \eqref{eq:ha}, if and only if
	\begin{equation}
		\label{eq:hb}
		-\frac{1}{\density(x,y)^2} \partial_1 \density(x, y) \partial_2 \density(x, y) + \frac{1}{\density(x,y)} \partial_1 \partial_2 \density(x, y) \ge 0, \quad \text{for all } x, y \in [0,1].
	\end{equation}
\end{lemma}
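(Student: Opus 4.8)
The plan is to establish the purely analytic equivalence of \eqref{eq:gz} and \eqref{eq:ha} first, and then obtain the statements about densities as consequences by reducing total positivity of $\density$ to \eqref{eq:gz} applied to $f = \log\density$.

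For the first equivalence, the workhorse is the identity
\begin{equation*}
f(w,z) - f(w,y) - f(x,z) + f(x,y) = \int_x^w \!\! \int_y^z \partial_1\partial_2 f(s,t)\, \ud t\, \ud s ,
\end{equation*}
valid for every $f \in C^2([0,1]^2)$ and all $0\le x\le w\le 1$, $0\le y\le z\le 1$. It follows by applying the fundamental theorem of calculus twice: first in the second variable to write $f(s,z)-f(s,y) = \int_y^z \partial_2 f(s,t)\,\ud t$, then in the first variable, the $C^2$ hypothesis guaranteeing that all integrands are continuous so that everything is legitimate. If \eqref{eq:ha} holds, the integrand is nonnegative, hence the right-hand side is $\ge 0$ and \eqref{eq:gz} follows. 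Conversely, assuming \eqref{eq:gz}, fix $(x,y)$ and divide the identity by $(w-x)(z-y)>0$: the left-hand side stays $\ge 0$ for every admissible $w>x$, $z>y$, and as $w\downarrow x$, $z\downarrow y$ the average of $\partial_1\partial_2 f$ over the shrinking rectangle $[x,w]\times[y,z]\subseteq[0,1]^2$ tends to $\partial_1\partial_2 f(x,y)$ by continuity, so $\partial_1\partial_2 f(x,y)\ge 0$; at boundary points one uses one-sided limits, which suffices.

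For the statements about a positive density $\density \in C^2([0,1]^2)$, I would first note that $\density$ attains a positive minimum on the compact square, so $f \defn \log\density \in C^2([0,1]^2)$. Next I would reduce the lattice condition \eqref{eq:tp} to \eqref{eq:gz}: for $x = (x_1,x_2)$, $y=(y_1,y_2)$, the inequality $\density(x\land y)\density(x\lor y)\ge\density(x)\density(y)$ is a trivial equality whenever $x_1\le y_1$ and $x_2\le y_2$ (or both reversed), while in the remaining case, say $x_1\le y_1$ and $x_2\ge y_2$, it reads $\density(x_1,y_2)\density(y_1,x_2)\ge\density(x_1,x_2)\density(y_1,y_2)$; ranging over all such quadruples shows \eqref{eq:tp} is equivalent to $\density(a,c)\density(b,d)\ge\density(a,d)\density(b,c)$ for all $a\le b$, $c\le d$, which upon taking logarithms is exactly \eqref{eq:gz} for $f=\log\density$. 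Combined with the first equivalence, $\density$ is totally positive iff $\partial_1\partial_2\log\density\ge 0$ on $[0,1]^2$. Finally, the quotient and product rules give
\begin{equation*}
\partial_1\partial_2 \log\density = \partial_1\!\left( \frac{\partial_2\density}{\density} \right) = \frac{\partial_1\partial_2\density}{\density} - \frac{\partial_1\density\,\partial_2\density}{\density^2} ,
\end{equation*}
so that \eqref{eq:ha} for $\log\density$ is literally \eqref{eq:hb}, closing the chain of equivalences.

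There is no serious obstacle: the only points needing a little care are the limiting argument in the converse direction of the first equivalence (invoking continuity of $\partial_1\partial_2 f$ and using only one-sided limits near the boundary of the square) and the bookkeeping in the min/max reduction of \eqref{eq:tp} to the rectangular supermodularity inequality.
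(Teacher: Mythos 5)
Your argument is correct and follows essentially the same route as the paper's proof: the paper invokes the fundamental theorem of calculus and continuity for the first equivalence (which you instantiate via the double-integral identity and the shrinking-rectangle limit) and then takes logarithms and applies the chain rule for the density statement, exactly as you do. You have merely supplied the details the paper leaves implicit.
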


\begin{proof}
The first claim follows easily from the fundamental theorem of calculus and the continuity of $f$. 
	To obtain the second claim, note that because \( \density \) is bounded away from zero, we can take logarithms, and the $\mtp$ condition~\eqref{eq:tp} is equivalent to \eqref{eq:gz} with \( f = \log \density \).
	Computing the derivative of \( \log \density \) by applying the chain rule finally yields the condition \eqref{eq:hb}.
\end{proof}

\declarecommand{\constholdnorm}{C_1}
\declarecommand{\constsep}{c_2}
\declarecommand{\constdeltaenlarge}{C_3}
\declarecommand{\constbounded}{C_4}
\declarecommand{\constexp}{C_5}
\declarecommand{\constltwo}{C_6}
\declarecommand{\constceil}{C_7}


	To prove Theorem~\ref{thm:lower-cts}, 
	we distinguish three cases, \( \expholder \le 1 \), \( \expholder \ge 2 \), and \( 1 < \expholder < 2 \).
	In the first case where $\expholder \le 1$, 
	it suffices to consider densities that only depend on one variable, which fulfill the $\mtp$ constraint automatically, leading to the rate \( \sample^{-2 \expholder/(2 \expholder + 1)} \) for the estimation of a one-dimensional H\"older function. 	
	On the other hand, in the case \( \expholder \ge 2 \), we appeal to two-dimensional constructions in density estimation. The $\mtp$ condition is a second-order constraint and hence can be satisfied by a carefully chosen set of H\"older functions for \( \expholder \ge 2 \), leading to the rate \( \sample^{-2 \expholder/(2 \expholder + 2)} \). 
	Finally, in the remaining regime \( 1 \le \expholder \le 2 \), we in fact use the construction for \( \expholder = 2 \), yielding the rate \( \sample^{-2/3} \).


	\subsubsection{Case \texorpdfstring{\( \expholder \le 1 \)}{beta <= 1}.}

	For \( \expholder \le 1 \), 
	we apply an argument based on Fano's inequality, \cite[Theorem 2.5]{Tsy09}.
	The following construction is standard in proving lower bounds for nonparametric estimation; see \cite[Section 2.6]{Tsy09}, for example.

	Fix a nonzero function \( g \in C^{\infty}(\R) \) supported in \( [0,1] \) such that \( \int_\R g(x) \, \ud x = 0 \) and \( g \) is $1/2$-Lipschitz. 
	Let \( k \in \mathbb{N}	\) and assume without loss of generality that it is divisible by 4.
	Denote by \( x_j \), \( j = 1, \dots, k, \) the left endpoints of an equidistant subdivision of the interval \( [0, 1] \), that is,
	\begin{equation}
		\label{eq:gx}
		x_j = \frac{j - 1}{k}, \quad j = 1, \dots, k,
	\end{equation}
	leading to the subdivision
	\begin{equation}
		\label{eq:he}
		S_{j} = (x_j, 0) + [0,\frac{1}{k}] \times [0,1], \quad j = 1, \dots, k ,
	\end{equation}
	of the square \( [0,1]^2 \).
	With this, define the functions \( g_j \in C^{\infty}([0, 1]^2) \) as
	\begin{equation}
		\label{eq:gw}
		g_j(x, y) = \frac{1}{k^\expholder} g(k(x - x_j)), \quad (x, y) \in [0,1]^2, \, j = 1, \dots, k. 
	\end{equation}
	Note that each \( g_j \) is supported in \( S_j \). 
Moreover, we have \( g_j \in \classholder(\expholder, 1/2) \) for $\expholder \le 1$: for $(x, y), (x', y') \in S_j$, by the $1/2$-Lipschitzness of $g$, 
\begin{align}
| g_j(x, y) - g_j(x', y') | &= \frac{1}{k^\expholder} | g(k (x - x_j)) - g(k (x' - x_j)) | \\
&\le \frac{1}{2 k^\expholder} | k(x - x') |
\le \frac{1}{2 k^\expholder} | k(x - x') |^{\beta} = \frac 12 |x - x'|^\beta . 
\end{align}
For \( (x,y) \in S_j \) and \( (x', y') \in S_{j'} \) in the case \( j \neq j' \), we obtain the same estimate by applying the above to segments of the line connecting the two points.
This also shows that $\|g_j\|_\infty \le 1/2$ as we can take $(x, y)$ at the boundary of $S_j$ so that $g_j(x, y) = 0$.

	By the Gilbert-Varshamov bound \cite[Lemma 4.10]{Mas07}, there exists a set \( \{\tau^{(\ell)}\}_{\ell=1}^M \) with \( \tau^{(\ell)} \in \{0,1\}^k \) such that
	\begin{itemize} 
		\item \(\ham(0, \tau^{(\ell)} ) = k/4\), 
		\item \(\ham( \tau^{(\ell)} , \tau^{(r)} ) \ge k/8\) for all distinct \(\ell, r \in [M]\), and 
		\item \(\log(M) \ge k/30\).
	\end{itemize} 
	For each \( \ell \in [M] \), set
	\begin{equation}
		\label{eq:gy}
		\density^{(\ell)}(x, y) = 1 + \sum_{j=1}^{k} \tau^{(\ell)}_j g_j(x, y).
	\end{equation}

	As \( \| g_j \|_\infty \le 1/2 \) for each \( j \), all \( \density^{(\ell)} \) are bounded within $[1/2, 3/2]$.
	Since \( g \) is mean-zero, \( \density^{(\ell)} \) is a density. We also have that \( \density^{(\ell)} \in \classholder(\expholder, 1/2) \) by definition because $ g_j \in \classholder(\expholder, 1/2)$. 
	Moreover, checking condition \eqref{eq:hb} in Lemma \ref{lem:diff-char} yields that all densities are $\mtp$, since they only depend on \( x \).

	To check the conditions of \cite[Theorem 2.5]{Tsy09}, we first apply \eqref{eq:hel-kl} to obtain that
	\begin{equation}
		\label{eq:hz}
		\KL(\density^{(\ell)}, \density^{(r)}) \lesssim h^2(\density^{(\ell)}, \density^{(r)}), \quad \ell, k \in [M],
	\end{equation}
	since all densities are bounded from above and below.
	Next, the boundedness of the densities and the mean value theorem together imply that 
	\begin{equation}
		\label{eq:ia}
		h^2(\density^{(\ell)}, \density^{(r)}) \asymp \int_{[0,1]^2} (\density^{(\ell)}(z) - \density^{(r)}(z))^2 \, \ud z = \| \density^{(\ell)} - \density^{(r)} \|_{L^2([0,1]^2)},
	\end{equation}
	which can be estimated as
	\begin{align}
		\label{eq:hd}
		\| \density^{(\ell)} - \density^{(r)} \|_{L^2([0,1]^2)}
		= {} & \sum_{j = 1}^{k} \int_{S_j} \left(\density^{(\ell)}(z) - \density^{(r)}(z)\right)^2 \, \ud z\\
		= {} & \sum_{j : \tau^{(\ell)}_j \neq \tau^{(r)}_j} \int_{S_j} (g_j(z))^2 \, \ud z\\
		= {} & \frac{1}{k^{2 \expholder + 1}} \sum_{j : \tau^{(\ell)}_j \neq \tau^{(r)}_j} \int_0^1 (g(x))^2 \, \ud x\\
		\asymp {} & \ham(\tau^{(\ell)}, \tau^{(r)}) k^{-2 \expholder-1} 
		\asymp k^{- 2 \expholder} , 
	\end{align}
	where the hidden constants only depend on the choice of $g$, which can be made absolute.  

	That means on the one hand that
	\begin{equation}
		\label{eq:ib}
		\KL((\density^{(\ell)})^{\otimes \sample}, (\density^{(r)})^{\otimes \sample}) \lesssim \sample k^{-2 \expholder},
	\end{equation}
	and on the other hand that
	\begin{equation}
		\label{eq:ic}
		\hel^2(\density^{(\ell)}, \density^{(r)}) \gtrsim k^{-2 \expholder}.
	\end{equation}
	Thus, if we pick
	\begin{equation}
		\label{eq:hf}
		k = C \lceil \sample^{1/(2 \expholder + 1)} \rceil 
	\end{equation}
	for a sufficiently large constant $C> 0$, we can ensure that 
	\begin{equation}
		\label{eq:id}
		\KL((\density^{(\ell)})^{\otimes \sample}, (\density^{(r)})^{\otimes \sample}) \le 0.1 \log(M) ,
	\end{equation}
	and we conclude by \cite[Theorem 2.5]{Tsy09} that
	\begin{equation}
		\label{eq:hg}
		\inf_{\tilde \density} \sup_{\densitygt \in \classholder(\expholder, \constholder)} \p_{(P^\ast)^{\otimes \sample}} \left( \hel^2(\tilde \density, \densitygt) \ge \constsep \sample^{\frac{-2 \expholder}{2 \expholder + 1}} \right) \ge \frac{1}{3},
	\end{equation}
	for a constant \( \constsep > 0 \).

	\subsubsection{Case \texorpdfstring{\( \expholder \ge 2 \)}{beta >= 2}.}

	For \( \expholder \ge 2 \), 
	we need to construct hypotheses that depend on both variables \( x \) and \( y \). 
	Let \( k \in \mathbb{N} \) to be determined later, and without loss of generality, assume that \( k \) is divisible by \( 4 \).
	Fix a non-zero, non-negative function \( f \in C^{\infty}(\R) \) with support in \( [0,1] \), and set 
	\begin{equation}
		\label{eq:hi}
		g(x,y) = f(x) f(y).
	\end{equation}
	Moreover, for \( i, j \in [k] \), define 
	\( w_{i, j} \) as the corners of 
	an equidistant partition of \( [0,1]^2 \) denoted by \( S_{i, j} \), that is, 
	\begin{align}
		\label{eq:hj}
		w_{i, j} = \Big( \frac{i-1}{k}, \frac{j - 1}{k} \Big)^\top 
		\quad \text{ and } \quad
		S_{i, j} =  w_{i, j} + \Big[ 0, \frac{1}{k} \Big]^2, \quad i, j \in [k] .
	\end{align}
	In addition, we let 
	\begin{equation}
		\label{eq:hh}
		g_{i, j} (z) = \frac{1}{k^\expholder} g(k ( z - w_{i, j} )), \quad z \in [0,1]^2,
	\end{equation}
	which is supported in \( S_{i, j} \). 
	Note that for any $r \in \N$ and $\alpha \in \{1, 2\}^r$, we have 
	\begin{equation}
		\label{eq:hy}
		\partial^{\alpha} g_{i, j} (z) = \frac{k^{r}}{k^\expholder} \partial^{\alpha} g(k(z - w_{i, j})) . 
	\end{equation}
	Since $g \in C^{\infty} (\R^2)$, it is easily verified that by definition, \( g_j \in  \classholder(\expholder, C_1) \) for some constant \( C_1 > 0 \) that only depends on \( g \). 

	By the Gilbert-Varshamov bound \cite[Lemma 4.10]{Mas07}, there exists a set \( \{\tau^{(\ell)}\}_{\ell=1}^M \) such that \( \tau^{(\ell)} \in \{0,1\}^{k \times k} \) and
	\begin{itemize} 
		\item \(\ham(0, \tau^{(\ell)} ) = k^2/4\), 
		\item \(\ham( \tau^{(\ell)} , \tau^{(r)} ) \ge k^2/8\) for all distinct \(\ell, r \in [M]\), and 
		\item \(\log(M) \ge k^2/30\).
	\end{itemize} 
	Next, we associate a density to each \( \tau^{(\ell)} \) such that we can control the pairwise distances between these densities.
	Similar to the proof of Theorem \ref{thm:lower}, we claim that there exists a useful choice of scaling constants that ensures that the normalization factor of the log-densities stays the same among all \( \ell \).

	For a fixed \( \delta \in [0,1] \), we claim that there exists a constant \( \constdeltaenlarge \) such that for every \( (i, j) \in [k]^2 \), there exists \( \tilde \delta_{i, j} \in [\delta, \constdeltaenlarge \delta] \) with
	\begin{equation}
		\label{eq:hn}
		\underbrace{\int_{S_{i, j}} \left[ \exp \left( z_1 z_2 + \tilde \delta_{i, j} g_{i, j}(z) \right) - \exp \left( z_1 z_2 \right) \right] \, \ud z}_{=:h_{i, j}(\tilde \delta_{i, j})}
		= \underbrace{\int_{S_{1, 1}} (\exp \left( 1 + \delta g_{1, 1}(z) \right) - e) \, \ud z}_{=: H}.
	\end{equation}
	To see why this is true, denote the left-hand side of \eqref{eq:hn} by \( h_{i, j}(\tilde \delta_{i, j}) \) and the right-hand side by \( H \) as above.
	Observe that \( h_{i, j}(\tilde \delta_{i, j}) \) is a continuous function of \( \tilde \delta_{i, j} \) as a consequence of the bounded convergence theorem.
	Hence, the intermediate value theorem allows us to conclude \eqref{eq:hn} if we can show that \( h_{i, j}(\delta) \le H \) and \( h_{i, j}(\constdeltaenlarge \delta) \ge H \).
	The first inequality \( h_{i, j}(\delta) \le H \) follows from the fact that
	\begin{equation}
		\label{eq:ie}
		\exp \left( z_1 z_2 + \tilde \delta_{i, j} g_{i, j}(z) \right) - \exp(z_1 z_2) \le \exp \left( 1 + \tilde \delta_{i, j} g_{i, j}(z) \right) - \exp(1)
	\end{equation}
	and changing the limits of the integral.
	The second inequality \( h_{i, j}(\constdeltaenlarge \delta) \ge H \) follows from the following estimates.
	For \( h_{i, j}(\constdeltaenlarge \delta) \), we have by the fundamental theorem of calculus and the fact that \( \exp(t) \ge 1 \) for \( t \ge 0 \), that
	\begin{align}
		\label{eq:if}
		h_{i, j}(\constdeltaenlarge \delta)
		= {} & \int_{S_{i, j}} \int_{z_1 z_2}^{z_1 z_2 + \constdeltaenlarge \delta g_{i, j}(z)} \exp(t) \, \ud t \, \ud z\\
		\ge {} & \int_{S_{i, j}} \int_{0}^{\constdeltaenlarge \delta g_{i, j}(z)} \exp(t) \, \ud t \, \ud z\\
		= {} & \int_{S_{1, 1}} \int_{0}^{\constdeltaenlarge \delta g_{1, 1}(z)} \exp(t) \, \ud t \, \ud z\\
		\ge {} & \constdeltaenlarge \delta \int_{S_{1, 1}} g_{1, 1}(z) \, \ud z.
	\end{align}
	On the other hand, for \( H \), we similarly have
	\begin{align}
		\label{eq:ig}
		H
		= {} & \int_{S_{1, 1}} \int_0^{\delta g_{1, 1}(z)} \exp(1 + t) \, \ud t \, \ud z\\
		= {} & e \int_{S_{1, 1}} \int_0^{\delta g_{1, 1}(z)} \exp(t) \, \ud t \, \ud z\\
		\le {} & \delta e \sup_{z \in S_{1, 1}} \exp(\delta g_{1, 1}(z)) \int_{S_{1, 1}} g_{1, 1}(z) \, \ud z\\
		\le {} & e \sup_{z \in S_{1, 1}} \exp(g_{1, 1}(z)) \int_{S_{1, 1}} g_{1, 1}(z) \, \ud z.
	\end{align}
	By definition, it holds that $\| g_{1, 1} \|_\infty \le \|g\|_\infty$. 
	Therefore, with the above estimates combined, we see that if \( \constdeltaenlarge \ge e \cdot \exp( \|g\|_\infty) \), then \( h_{i, j}(\constdeltaenlarge \delta) \ge H \), and thus \eqref{eq:hn} is proved.

	For each $\tau^{(\ell)}$, let us define 
	\begin{equation}
		\label{eq:hk}
		\tilde \eta^{(\ell)}(z) \defn z_1 z_2 + \sum_{i, j \in [k]} \tau^{(\ell)}_{i, j} \tilde \delta_{i, j} g_{i, j}(z),
	\end{equation}
	which after normalization leads to the log-densities
	\begin{equation}
		\label{eq:hl}
		\eta^{(\ell)}(z) \defn \tilde \eta^{(\ell)}(z) - \log \underbrace{\int_{[0,1]^2} \exp(\tilde \eta^{(\ell)}(w)) \, \ud w}_{=: \mathfrak{N}},
	\end{equation}
	and the densities \( \density^{(\ell)}(z) \defn \exp(\eta^{(\ell)}(z)) . \)

	As in the proof of Theorem \ref{thm:lower}, \( \mathfrak{N} \) does not depend on \( \ell \), since by the fact that $g_{i, j}$ is supported on $S_{i, j}$, then \eqref{eq:hn} and that \( \ham(0, \tau^{(\ell)}) = k^2/4 \), we have 
	\begin{align}
		\label{eq:hm}
		\mathfrak{N}
		= {} & \int_{[0,1]^2} \exp \bigg( z_1 z_2 + \sum_{i, j \in [k]} \tau^{(\ell)}_{i, j} \tilde \delta_{i, j} g_{i, j}(z) \bigg) \, \ud z \\
		= {} & \sum_{i,j \in [k]} \int_{S_{i, j}} \exp \bigg( z_1 z_2 + \tau^{(\ell)}_{i, j} \tilde \delta_{i, j} g_{i, j}(z) \bigg) \, \ud z\\
		= {} & \sum_{i,j \in [k]} \int_{S_{i, j}} \exp \left( z_1 z_2 \right) \, \ud z + \sum_{(i, j) : \tau^{(\ell)}_{i, j} = 1} \left[ \int_{S_{i, j}} \exp \left( z_1 z_2 + \tilde \delta_{i, j} g_{i, j}(z) \right) \, \ud z - \int_{S_{i, j}} \exp \left( z_1 z_2 \right) \, \ud z \right]\\
		= {} & \int_{[0,1]^2} \exp \left( z_1 z_2 \right) \, \ud z + \ham(0, \tau^{(\ell)}) \int_{S_{1, 1}} (\exp \left( 1 + \delta g_{1, 1}(z) \right) - e) \, \ud z\\
		= {} & \int_{[0,1]^2} \exp \left( z_1 z_2 \right) \, \ud z + \frac{k^2}{4} \int_{S_{1, 1}} (\exp \left( 1 + \delta g_{1, 1}(z) \right) - e) \, \ud z\\
		= {} & \int_{[0,1]^2} \exp \left( z_1 z_2 \right) \, \ud z + \frac{1}{4} \int_{[0,1]^2} \left( \exp \left( 1 + \frac{\delta}{k^{\expholder}} g(z) \right) - e \right) \, \ud z.
		\label{eq:ii}
	\end{align}
	Additionally, from the last line \eqref{eq:ii} of the above calculation, we can also conclude that \( \mathfrak{N} \) can be bounded from above and below by positive constants independent from \( k \).

	Moreover, for all \( \ell \) and \( z \in [0,1]^2 \), it holds that
	\begin{align}
		\label{eq:ih}
		\partial_1 \partial_2 \eta^{(\ell)}(z)
		= {} & \partial_1 \partial_2 \tilde \eta^{(\ell)}(z)\\
		= {} & 1 + \sum_{i,j \in [k]} \tau^{(\ell)}_{i, j} \tilde \delta_{i, j} \partial_1 \partial_2 g_{i, j}(z)\\
		= {} & 1 + \sum_{i,j \in [k]} \tau^{(\ell)}_{i, j} \tilde \delta_{i, j} \frac{k^2}{k^\expholder} \partial_1 \partial_2 g(k(z - w_{i, j}))\\
		\ge {} & 1 - \constdeltaenlarge \delta \frac{k^2}{k^\expholder} \sup_{z} | \partial_1 \partial_2 g(z)|,
	\end{align}
	which, in view of \( \expholder \ge 2 \), can be made positive if \( \delta \) is chosen to be a sufficiently small constant. 
	Lemma \ref{lem:diff-char} then implies that all \( \density^{(\ell)} \) are $\mtp$ densities. 
	
	In addition, for any $r \in \N$ and $\alpha \in \{1, 2\}^r$, we have by definition 
	\begin{align}
	\partial^\alpha \rho^{(\ell)} (z) &= \rho^{(\ell)} (z) \cdot \partial^\alpha  \eta^{(\ell)} (z) \\
	&= \exp \bigg( z_1 z_2 + \sum_{i, j \in [k]} \tau^{(\ell)}_{i, j} \tilde \delta_{i, j} g_{i, j}(z) - \log \mathfrak{N} \bigg) \cdot \bigg( \partial^\alpha (z_1 z_2) + \sum_{i, j \in [k]} \tau^{(\ell)}_{i, j} \tilde \delta_{i, j} \partial^\alpha g_{i, j}(z) \bigg) . 
	\end{align}
	Since $\mathfrak{N}$ is bounded from above and below by positive constants, the first factor (that is, $\rho^{(\ell)} (z)$) is bounded from above and below. 
	Also, we have that \( g_{i, j} \in  \classholder(\expholder, C_1) \) and $\tilde \delta_{i, j} \le C_3 \delta$, so it is easily seen that if $\delta$ is chosen to be a sufficiently small constant, then $\rho^{(\ell)} \in \classholder(\expholder, 1)$ by definition. 

	Finally, we bound \( \KL(\density^{(\ell)}, \density^{(r)}) \) and \( \hel(\density^{(\ell)}, \density^{(r)}) \). 
	We have seen above that $\rho^{(\ell)}$ can be bounded from above and below, that is, $C_4^{-1} \le \rho^{(\ell)} \le C_4$ for a constant $C_4 > 0$. 
	Moreover, we can choose $C_4$ so that $- C_4 \le \eta^{(\ell)} \le C_4$. 
%
	For the Hellinger distance, we write
	\begin{align}
		\label{eq:il}
		\hel^2(\density^{(\ell)}, \density^{(r)})
		= {} & \int_{[0,1]^2} \left( \sqrt{\density^{(\ell)}(z)} - \sqrt{\density^{(r)}(z)} \right)^2 \, \ud z\\
		= {} & \int_{[0,1]^2} \left( \exp(\eta^{(\ell)}(z)/2) - \exp(\eta^{(r)}(z)/2) \right)^2 \, \ud z\\
		= {} & \int_{[0,1]^2} \density^{(\ell)}(z) \left( 1 - \exp(\eta^{(r)}(z)/2 - \eta^{(\ell)}(z)/2) \right)^2 \, \ud z . 
	\end{align}
	By the Taylor expansion, we can obtain a quadratic control of the exponential term of the form
	\begin{equation}
		\label{eq:in}
		x^2 \le (1 - \exp(x))^2 \le \constexp x^2, \quad x \in [-\constbounded, \constbounded],
	\end{equation}
	where \( \constexp > 0 \). This allows us to bound
	\begin{equation}
		\label{eq:io}
		\frac{1}{4 \constbounded} \int_{[0,1]^2} (\tilde \eta^{(\ell)}(z) - \tilde \eta^{(r)}(z))^2 \, \ud z
		\le \hel^2(\density^{(\ell)}, \density^{(r)})
		\le \frac{ \constexp \constbounded }{4} \int_{[0,1]^2} (\tilde \eta^{(\ell)}(z) - \tilde \eta^{(r)}(z))^2 \, \ud z.
	\end{equation}
	Taking into account \eqref{eq:hel-kl}, it remains to bound the \( L^2 \) distance between \( \tilde \eta^{(\ell)} \) and $\tilde \eta^{(r)}$. Using again that the support of $g_{i, j}$ is in $S_{i, j}$, we have
	\begin{align}
		\label{eq:hq}
		\int_{[0,1]^2} (\tilde \eta^{(\ell)}(z) - \tilde \eta^{(r)}(z))^2 \, \ud z
		= {} & \int_{[0,1]^2} \bigg( \sum_{i, j \in [k]} ( \tau^{(\ell)}_{i, j} - \tau^{(r)}_{i, j} )  \tilde \delta_{i, j} g_{i, j}(z) \bigg)^2 \, \ud z \\
		= {} & \sum_{i, j \in [k]} (\tau^{(\ell)}_{i, j} - \tau^{(r)}_{i, j})^2 \tilde \delta_{i, j}^2 \int_{S_{i, j}} g_{i, j}(z)^2 \, \ud z\\
		\le {} & \sum_{i, j \in [k]} (\tau^{(\ell)}_{i, j} - \tau^{(r)}_{i, j})^2 \constdeltaenlarge^2 \delta^2 \int_{S_{1, 1}} g_{1, 1}(z)^2 \, \ud z \\
		= {} & \ham(\tau^{(\ell)}, \tau^{(r)}) \constdeltaenlarge^2 \delta^2 \frac{1}{k^{2\expholder+2}} \int_{[0,1]^2} g(z)^2 \, \ud z
		\le
		\frac{\constdeltaenlarge^2 \delta^2}{k^{2 \expholder}} \int_{[0,1]^2} g(z)^2 \, \ud z,
	\end{align}
	where we changed the limits of integration by substitution and used that \( \ham(\tau^{(\ell)}, \tau^{(r)}) \le k^2 \).
	Similarly, we can derive a lower bound of the same order, using that $\tilde \delta_{i, j} \ge \delta$ and \( \ham(\tau^{(\ell)}, \tau^{(r)}) \ge k^2/4 \). In conclusion, there exists a constant \( \constltwo > 0 \) such that
	\begin{equation}
		\label{eq:ip}
		\frac{1}{\constltwo} \frac{\delta^2}{k^{2 \expholder}}
		\le \hel^2(\density^{(\ell)}, \density^{(r)})
		\le \constltwo \frac{\delta^2}{k^{2 \expholder}}.
	\end{equation}

	To finish, we note that for the \( \KL \) condition of \cite[Theorem 2.5]{Tsy09} to hold, we need
	\begin{equation}
		\label{eq:iq}
		\KL((\density^{(\ell)})^{\otimes \sample}, (\density^{(r)})^{\otimes \sample})
		= \sample \KL(\density^{(\ell)}, \density^{(r)})
		\le C_7 \sample \hel^2(\density^{(\ell)}, \density^{(r)})
		\le \sample \constltwo C_7 \frac{\delta^2}{k^{2 \expholder}}
		\le 0.1 \log(M),
	\end{equation}
	which, in view of the bound \( \log(M) \ge k^2/30 \), can be fulfilled by choosing \( \delta \) to be a sufficiently small constant and 
	$
		k = \big\lceil \sample^{\frac{1}{2 \expholder + 2}} \big\rceil . 
	$
	This then leads to a separation of the hypotheses of
	\begin{equation}
		\label{eq:is}
		\hel(\density^{(\ell)}, \density^{(r)})
		\ge c_8 \sample^{\frac{- 2 \expholder}{2 \expholder + 2}},
	\end{equation}
	so \cite[Theorem 2.5]{Tsy09} yields
	\begin{equation}
		\label{eq:it}
		\inf_{\tilde \density} \sup_{\densitygt \in \classholder(\expholder, \constholder)} \p_{(P^\ast)^{\otimes \sample}} \left( \hel^2(\tilde \density, \densitygt) \ge c_8 \sample^{\frac{-2 \expholder}{2 \expholder + 2}} \right) \ge \frac{1}{3}.
	\end{equation}	

	\subsubsection{Case \texorpdfstring{\( 1 < \expholder < 2 \)}{1 < beta < 2}.}

	For \( 1 < \expholder < 2 \), note that \( \classholder(2, \constholder) \subseteq \classholder(\expholder, \constholder) \), so the above construction in the case \( \expholder = 2 \) still remains valid, which we can use to conclude
	\begin{equation}
		\label{eq:iu}
		\inf_{\tilde \density} \sup_{\densitygt \in \classholder(\expholder, \constholder)} \p_{(P^\ast)^{\otimes \sample}} \left( \hel^2(\tilde \density, \densitygt) \ge \constsep \sample^{-\frac{2}{3}} \right) \ge \frac{1}{3},
	\end{equation}
	which finishes the proof.

\section{Conclusion and discussion}
\label{sec:conclusion}

In this work, we studied minimax estimation of discrete and continuous two-dimensional totally positive distributions. Particularly, for estimation of $\beta$-H\"older smooth distributions, we established the minimax rates of estimation in the squared Hellinger distance up to polylogarithmic factors, for any $\beta \ge 0.5$. In addition, we proposed and implemented efficient algorithms to compute our estimators. The numerical experiments supported our theoretical findings. 

Several questions are left open for future research. First, for $\beta \in (0, 0.5)$, the upper bound for our estimator does not match the minimax lower bound. Moreover, our bounds do not capture the optimal dependency on the pointwise infimum or supremum of the ground-truth density. These are possibly artifacts of our estimation procedure or proofs. 
Second, we studied a variant of the MLE with an extra box constraint. While this box-constrained MLE has almost the same computational cost and empirical performance as the original MLE, it is theoretically more desirable to establish the same guarantees for the original MLE. 
Third, it is of significant interest to study estimation of totally positive distributions in general dimensions. However, our current proof techniques do not generalize to higher dimensions straightforwardly, and we leave this to future research.

\appendix

\section{Nonexistence of MLE under \texorpdfstring{$\mtp$}{MTP2} constraint alone}
\label{sec:ill-posedness}

In this section, we show that without further regularity assumptions on the underlying densities, the MLE under the $\mtp$ constraint does not exist.

\begin{lemma}
	\label{lem:ill-posedness}
	Let \( \densitygt \) be an \( \mtp \) density on \( [0,1]^2 \) with respect to the Lebesgue measure. Let \( X_1, \dots, X_N \) be \( N \) \iid observations from the corresponding probability distribution.
	Then, the optimization problem
	\begin{equation}
		\max \sum_{i=1}^{N} \log \density(X_i) \quad \text{\emph{subject to: }} \density \text{ is an } \mtp \text{ density w.r.t. the Lebesgue measure}
	\end{equation}
	is almost surely unbounded. Consequently, the MLE under the $\mtp $ constraint does not exist.
\end{lemma}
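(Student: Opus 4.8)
The plan is to show that the supremum in question equals \( +\infty \) by exhibiting, on an almost-sure event, a sequence of \( \mtp \) densities along which the log-likelihood diverges. Write \( X_k = (a_k, b_k) \) for \( k \in [N] \). Since \( X_1, \dots, X_N \) are i.i.d.\ from a distribution with a density with respect to Lebesgue measure on \( [0,1]^2 \), the event
\[
E \defn \big\{\, a_1, \dots, a_N \text{ are distinct},\ b_1, \dots, b_N \text{ are distinct},\ a_k, b_k \in (0,1)\ \text{for all } k \,\big\}
\]
has probability one: each excluded configuration (a coincidence \( a_k = a_\ell \) or \( b_k = b_\ell \) with \( k \neq \ell \), or a coordinate equal to \( 0 \) or \( 1 \)) is a Lebesgue-null subset of \( [0,1]^{2N} \), and there are only finitely many of them. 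From now on I would fix an outcome in \( E \).

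The key point is that \emph{product} densities are automatically \( \mtp \). Indeed, if \( \density(x_1, x_2) = f(x_1)\,g(x_2) \), then for any \( x, y \in [0,1]^2 \) one has \( \{x_i \wedge y_i,\, x_i \vee y_i\} = \{x_i, y_i\} \) for \( i = 1, 2 \), so that
\[
\density(x \wedge y)\,\density(x \vee y) = f(x_1)f(y_1)\,g(x_2)g(y_2) = \density(x)\,\density(y),
\]
i.e.\ \eqref{eq:tp} holds with equality. This lets me concentrate mass near the observations with no risk of violating the constraint. Set \( \eps_0 \defn \min\big(\{|a_k-a_\ell| : k\neq\ell\}\cup\{|b_k-b_\ell| : k\neq\ell\}\cup\{a_k,\,1-a_k,\,b_k,\,1-b_k : k\in[N]\}\big) > 0 \), fix \( \eps \in (0,\eps_0) \), and define densities on \( [0,1] \) by
\[
f_\eps(t) \defn \frac{1}{N\eps}\sum_{k=1}^N \1\big\{\,|t-a_k|<\tfrac{\eps}{2}\,\big\},\qquad g_\eps(t) \defn \frac{1}{N\eps}\sum_{k=1}^N \1\big\{\,|t-b_k|<\tfrac{\eps}{2}\,\big\}.
\]
By the choice of \( \eps \) the intervals in each sum are pairwise disjoint and contained in \( (0,1) \), so \( f_\eps \) and \( g_\eps \) integrate to \( 1 \), and \( f_\eps(a_k) = g_\eps(b_k) = 1/(N\eps) \) for every \( k \). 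Hence \( \density_\eps(x_1,x_2) \defn f_\eps(x_1)\,g_\eps(x_2) \) is a genuine \( \mtp \) density on \( [0,1]^2 \).

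Finally I would evaluate the objective along this family. Since \( \log\density_\eps(X_k) = \log f_\eps(a_k) + \log g_\eps(b_k) = 2\log\frac{1}{N\eps} \) for each \( k \),
\[
\sum_{k=1}^N \log \density_\eps(X_k) = 2N\log\frac{1}{N\eps} \longrightarrow +\infty \quad\text{as } \eps \to 0^+ .
\]
Thus on the probability-one event \( E \) the supremum is \( +\infty \), hence the problem is unbounded and its value is not attained, so the MLE over \( \mtp \) densities does not exist. I do not expect a genuine obstacle: the only subtlety is the choice of construction — smearing mass over small \emph{squares} centered at the data points would break \( \mtp \) at off-diagonal pairs of points, whereas the product density \( f_\eps \otimes g_\eps \), being log-modular, satisfies the constraint trivially; the remaining verifications (the null-set bound for \( E \), the normalizations of \( f_\eps, g_\eps \), and the divergence of the likelihood) are routine.
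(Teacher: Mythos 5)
Your proof is correct, and it differs from the paper's argument in a couple of genuine (if minor) ways. The paper constructs a density that depends only on the first coordinate, built from smooth bump functions, and then invokes the differential characterization of \(\mtp\) (Lemma~\ref{lem:diff-char}), which requires \(C^2\) smoothness, to conclude that \(\rho_\epsilon\) is \(\mtp\). You instead construct a product density \(f_\eps \otimes g_\eps\) with indicator-function bumps and observe directly that product densities are \emph{log-modular}, i.e.\ satisfy \eqref{eq:tp} with equality, so the constraint holds trivially with no smoothness needed. Your route is more elementary (no appeal to the differential lemma, no \(C^\infty\) mollifier) and exposes a slightly more general fact; the paper's one-variable density is actually a special case of your product-density observation (take \(g \equiv 1\), the uniform marginal). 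The small price you pay is that your almost-sure event \(E\) is marginally more demanding — you need distinctness in \emph{both} coordinates, since you separate bumps in both \(f_\eps\) and \(g_\eps\), whereas the paper only needs distinctness of the first coordinates. This costs nothing because the event is still probability one. Your remark about why a naive two-dimensional ``smear over small squares centered at each data point'' would break \(\mtp\) at off-diagonal pairs, whereas the product construction sidesteps this, is exactly the right intuition for why the construction must be taken as a product (or constant in one variable) rather than as a sum of genuinely two-dimensional kernels.
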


\begin{proof}
	Denote by \( \p_{\densitygt} \) the probability distribution corresponding to \( \densitygt \) and by \( \p_{\densitygt}^{\otimes N} \) the probability distribution of \( N \) \iid observations from \( \p_{\densitygt} \).
	Let
	\begin{equation}
		\label{eq:kc}
		\mathcal{A} = \{ (X_i)_1 \neq (X_j)_1 \text{ for all } i \neq j \}.
	\end{equation}
	Then,
	\begin{align}
		\p_{\densitygt}^{\otimes N}(\mathcal{A})
		= {} & 1 - \p_{\densitygt}^{\otimes N}\Big(\bigcup_{i, j \in [N], \, i \neq j} \left\{ (X_i)_1 = (X_j)_1 \right\} \Big)\\
		\ge {} & 1 - \sum_{i, j \in [N], \, i \neq j} \int_{[0,1]^4} \1\{x_1 = x_2\} \densitygt(x_1, y_1) \densitygt(x_2, y_2) d x_1 \, d x_2 \, d y_1 \, d y_2 \label{eq:illdef-integral}\\
		= {} & 1 \label{eq:illdef-0},
	\end{align}
	where the second line \eqref{eq:illdef-integral} follows from the sub-additivity of the probability measure and the definition of \( \mathcal{A} \), and the third line \eqref{eq:illdef-0} follows from the fact that the integrand in \eqref{eq:illdef-integral} is only non-zero on a lower dimensional subset of \( [0,1]^4 \) and hence is zero almost everywhere with respect to the Lebesgue measure.

	Similarly, if \( \mathcal{B} = \{ X_i \notin \{0, 1\} \text{ for all } i \} \), then \( (P^\ast)^{\otimes N}(\mathcal{B}) = 1 \).

	For the rest of the proof, assume that the event \( \mathcal{A} \cap \mathcal{B} \) occurred.
	Because of the definitions of \( \mathcal{A} \) and \( \mathcal{B} \), and the fact that \( N \) is finite, the minimum distance between the first coordinates is positive, as is the minimum distance to any of the interval boundaries, that is,
	\begin{equation}
		\epsilon_0 = \Big( \min_{i, j \in [N],\, i \neq j} | (X_i)_1 - (X_j)_1 | \Big) \wedge \Big( \min_{i \in [N]} (X_i)_1 \Big) \wedge \Big( \min_{i \in [N]} \big( 1 - (X_i)_1 \big) \Big)  > 0.
	\end{equation}
	Let \( f \in C^\infty(\R) \) be a non-negative bump function supported in \( [-1, 1] \) such that \( \int_{\R} f(x) d x = 1 \) and \( f(0) = f_0 > 0 \).
	For \( 0 < \epsilon < \epsilon_0/2 \), set
	\begin{equation}
		\rho_\epsilon(x, y) = \frac{1}{N} \sum_{i = 1}^{N} \frac{1}{\epsilon}f \left( \frac{x - (X_i)_1}{\epsilon} \right).
	\end{equation}
	Then,
	\begin{align}
		\label{eq:kd}
		\int_{[0,1]^2} \rho_\epsilon(x, y) \, d x \, d y
		= {} & \int_{[0,1]} \frac{1}{N} \sum_{i = 1}^{N} \frac{1}{\epsilon} f \left( \frac{x - (X_i)_1}{\epsilon} \right) d x\\
		= {} & \frac{1}{N} \sum_{i = 1}^{N} \int_{\R} f(\xi) \, d \xi = 1,
	\end{align}
	so \( \rho_\epsilon \) is again a probability distribution on \( [0,1]^2 \).
	Moreover, because \( \rho_\epsilon \) does not depend on \( y \), by Lemma~\ref{lem:diff-char}, \( \rho_\epsilon \) is \( \mtp \).

	Finally, for the log-likelihood, we obtain
	\begin{align}
	 \sum_{i = 1}^{N} \log (\rho_\epsilon(X_i)) 
	 = {} & \sum_{i = 1}^{N} \log \left( \frac 1N \sum_{j = 1}^{N} \frac{1}{\epsilon} f \left( \frac{(X_i)_1 - (X_j)_1}{\epsilon} \right) \right)\\
	 = {} & \sum_{i = 1}^{N} \log \left( \frac{f_0}{\epsilon} \right) \label{eq:ke}\\
	 = {} & N \log \left( \frac{f_0}{\epsilon} \right) \to \infty, \quad \text{for } \epsilon \to 0,
 \end{align}
 where \eqref{eq:ke} follows because by the definition of \( \mathcal{A} \) and of \( \rho_\epsilon \), the individual bumps centered at the observations \( X_i \) do not intersect.
 Combined, by choosing \( \epsilon \) arbitrarily small, we can obtain an arbitrarily large log-likelihood.
 In turn, the MLE does not exist.
\end{proof}

\begin{remark}
	\label{rem:ill-posedness}
	Even if the MLE is not defined, there could potentially exist a different estimator over the whole class of \( \mtp \) with good estimation properties.
	However, the estimation problem over the whole \( \mtp \) class bears other signs of ill-posedness:
	Since
	\begin{equation}
		\label{eq:kf}
		\bigcup_{\beta \in (0, 1)} \classholder(\expholder, R) \subseteq \{ \rho : \rho \text{ is } \mtp \},
	\end{equation}
	the lower bound in Theorem \ref{thm:lower-cts} suggests that no estimator \( \hat \rho \) can attain a polynomial estimation rate of
	\begin{equation}
		\label{eq:kg}
		\hel^2(\hat \rho, \rho^\ast) \lesssim N^{-\alpha},
	\end{equation}
	for any \( \alpha > 0 \) over the whole \( \mtp \) class.
	While this does not explicitly exclude possibly slower rates of convergence such as \( \log(N)^{-1} \), this still serves to show that the estimation problem without further regularity assumptions is ill-posed in the sense of not admitting polynomially fast rates.
\end{remark}

\section{Existing results}
\label{sec:tools}

We state and prove some results that are known or follow easily from existing ones. 

\subsection{Concentration of multinomial random variables}

The following is a standard tail bound for a binomial random variable.


\begin{lemma} \label{lem:bin-tail}
Suppose that $Y$ has the binomial distribution $\Bin(N, x)$, where $N$ is a positive integer and $x \in (0,1)$. 
Then for $y \in [0, 1]$, we have
$
|Y - N x| \le N y 
$
with probability at least $1 - 2 \exp \big( -N \frac{y^2}{2 (x+y)} \big)$.
\end{lemma}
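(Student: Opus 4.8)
The plan is to realize $Y$ as a sum of independent Bernoulli variables and invoke a Bernstein-type concentration bound. Write $Y = \sum_{i=1}^{N} B_i$, where $B_1,\dots,B_N$ are i.i.d.\ with $\p(B_i = 1) = x$ and $\p(B_i = 0) = 1-x$. The centered summands $B_i - x$ then satisfy $|B_i - x| \le 1$ almost surely, and $\sum_{i=1}^N \operatorname{Var}(B_i - x) = N x(1-x)$.

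By Bernstein's inequality for bounded independent random variables, for every $t > 0$,
\begin{equation*}
\p\big( |Y - Nx| \ge t \big) \le 2\exp\!\Big( -\frac{t^2/2}{N x(1-x) + t/3} \Big) .
\end{equation*}
Taking $t = Ny$ and using the elementary bound $N x(1-x) + Ny/3 \le Nx + Ny = N(x+y)$ in the denominator yields
\begin{equation*}
\p\big( |Y - Nx| \ge Ny \big) \le 2\exp\!\Big( -\frac{N^2 y^2/2}{N(x+y)} \Big) = 2\exp\!\Big( -N\,\frac{y^2}{2(x+y)} \Big) ,
\end{equation*}
which is exactly the asserted bound; equivalently, the complementary event $\{|Y - Nx| \le Ny\}$ has probability at least $1 - 2\exp(-N y^2/(2(x+y)))$.

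There is essentially no obstacle here: this is a textbook consequence of Bernstein's inequality, and the only mild point is the crude but sufficient denominator estimate $x(1-x) + y/3 \le x + y$. If one prefers a self-contained derivation avoiding a black-box citation, one can instead control each tail separately by the exponential Markov inequality: for the upper tail, $\p(Y - Nx \ge Ny) \le e^{-\lambda N y}\,\E\big[e^{\lambda(Y-Nx)}\big] = \big(e^{-\lambda(x+y)}(1 - x + x e^{\lambda})\big)^{N}$ for $\lambda > 0$, and symmetrically for the lower tail with $\lambda < 0$; optimizing $\lambda$ and using the standard inequality $\log(1 - x + x e^{\lambda}) \le x(e^{\lambda} - 1)$ together with $e^{\lambda} - 1 - \lambda \le \lambda^2/(2(1 - \lambda/3))$ recovers the same bound. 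Either route finishes the proof immediately.
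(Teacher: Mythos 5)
Your proof is correct, but it takes a different route from the paper. The paper disposes of this lemma in one line by citing an external result (Lemma~6 of Mao--Weed--Rigollet), applied with the parameter choices $r = (x-y)\lor 0$ and $s = (x+y)\land 1$, i.e.\ a black-box binomial tail bound. You instead give a self-contained derivation: writing $Y$ as a sum of i.i.d.\ Bernoulli variables, applying Bernstein's inequality with $\sigma^2 = Nx(1-x)$ and bound $1$ on the centered summands, and then coarsening the denominator via $x(1-x) + y/3 \le x + y$ to land exactly on the stated exponent $N y^2/(2(x+y))$. This checks out: with $t = Ny$ the Bernstein bound gives
\begin{equation}
\p\big( |Y - Nx| \ge Ny \big) \le 2\exp\Big( -\frac{N^2 y^2/2}{N x(1-x) + N y/3} \Big) \le 2\exp\Big( -\frac{N y^2}{2(x+y)} \Big),
\end{equation}
and the complementary event gives the claim. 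What your approach buys is transparency — it makes clear that the $x+y$ in the denominator is just a crude upper bound on the Bernstein variance-plus-range term — and it removes the dependence on an external reference; what the paper's approach buys is brevity. The only cosmetic point is that the complement of $\{|Y-Nx|\ge Ny\}$ is the strict inequality event, which of course is contained in $\{|Y-Nx|\le Ny\}$, so the stated probability bound follows; your alternative Chernoff sketch at the end is also standard and fine, though it is not needed once Bernstein is invoked.
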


\begin{proof}
This follows immediately from Lemma~6 of \cite{MaoWeeRig18} by taking $r = (x-y) \lor 0$ and $s = (x+y) \land 1$. 
\end{proof}

Next, we present a lemma that follows from Bernstein's inequality. 
Recall that for a vector $a \in \R^m$ and an entrywise positive vector $b \in \R^m$, we denote the $b$-weighted $\ell_2$-norm of $a$ by $\|a\|_b = ( \sum_{i=1}^m b_i a_i^2 )^{1/2}$.

\begin{lemma}
\label{lem:multi-bern}
Suppose that $Y$ is a random vector in $\R^m$ having the multinomial distribution $\Multi ( N, p )$, where $N$ is a positive integer and $p = (p_1, \dots, p_m)^\top$ is a vector in $(0, 1)^m$ with $\sum_{i=1}^m p_i = 1$. Then, for any vector $a\in\R^m$,
$$
\p \Big\{ \big| \langle Y - Np, a \rangle \big| \geq t \Big\} \leq 2 \exp \Big( \frac{ - 3 t^2 }{ 6 N \|a\|_p^2 + 4 \|a\|_\infty t } \Big) . 
$$
\end{lemma}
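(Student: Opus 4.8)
The plan is to reduce the multinomial deviation $\langle Y - Np, a\rangle$ to a sum of i.i.d.\ bounded random variables and then apply the classical Bernstein inequality. Recall that $Y = \sum_{k=1}^N e_{Z_k}$, where $Z_1,\dots,Z_N$ are i.i.d.\ with $\p\{Z_k = i\} = p_i$ and $e_i$ denotes the $i$th standard basis vector in $\R^m$. Hence $\langle Y - Np, a\rangle = \sum_{k=1}^N \xi_k$ where $\xi_k \defn a_{Z_k} - \langle p, a\rangle$. These $\xi_k$ are i.i.d., centered (since $\E a_{Z_k} = \sum_i p_i a_i = \langle p, a\rangle$), and bounded: $|\xi_k| \le \|a\|_\infty + \langle p, a\rangle \le 2\|a\|_\infty$ because $|\langle p, a\rangle| \le \|a\|_\infty$.

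Next I would compute the variance. We have $\Var(\xi_k) = \E a_{Z_k}^2 - (\E a_{Z_k})^2 \le \E a_{Z_k}^2 = \sum_{i=1}^m p_i a_i^2 = \|a\|_p^2$, so $\sum_{k=1}^N \Var(\xi_k) \le N\|a\|_p^2$. Now I would invoke Bernstein's inequality in the standard form: for a sum $S = \sum_{k=1}^N \xi_k$ of independent, centered random variables with $|\xi_k| \le M$ almost surely and $\sum_k \Var(\xi_k) \le v$,
\begin{equation}
\p\{|S| \ge t\} \le 2\exp\Big( \frac{-t^2/2}{v + Mt/3} \Big).
\end{equation}
Plugging in $M = 2\|a\|_\infty$ and $v = N\|a\|_p^2$ gives
\begin{equation}
\p\{|\langle Y - Np, a\rangle| \ge t\} \le 2\exp\Big( \frac{-t^2/2}{N\|a\|_p^2 + 2\|a\|_\infty t/3} \Big) = 2\exp\Big( \frac{-3t^2}{6N\|a\|_p^2 + 4\|a\|_\infty t} \Big),
\end{equation}
which is exactly the claimed bound after multiplying numerator and denominator of the exponent by $6$.

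There is no real obstacle here; the only points requiring minor care are (i) making sure the constants in the chosen version of Bernstein's inequality match the stated ones (using the form with $Mt/3$ in the denominator rather than, say, $Mt$), and (ii) the crude but sufficient bound $|\langle p, a\rangle| \le \|a\|_\infty$, which uses that $p$ is a probability vector. Everything else is the routine representation of a multinomial vector as a sum of i.i.d.\ indicator vectors. I would therefore present this as a short corollary of Bernstein's inequality, stating the version used and then performing the two-line variance and boundedness computations.
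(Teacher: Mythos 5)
Your proof is correct and follows essentially the same route as the paper: write $Y$ as a sum of $N$ i.i.d.\ indicator vectors, so that $\langle Y-Np,a\rangle$ becomes a sum of i.i.d.\ centered variables bounded by $2\|a\|_\infty$ with total variance at most $N\|a\|_p^2$, and apply Bernstein's inequality, which yields the stated constants exactly. No gaps.
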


\begin{proof}
Let $I_1, \dots, I_N$ be i.i.d. $\Multi(1, p)$ random variables. That is, we have $I_j = i$ with probability $p_i$ for each $i \in [m]$ and $j \in [N]$. Then we have $Y_i = \sum_{j=1}^N \1 \{I_j = i\}$, and thus 
\begin{align*}
\langle Y - Np, a \rangle &= \sum_{i=1}^m (Y_i - N p_i) a_i 
= \sum_{i=1}^m \Big( \sum_{j=1}^N \1 \{I_j = i\} - N p_i \Big) a_i \\
&= \sum_{j=1}^N \sum_{i=1}^m ( \1 \{I_j = i\} - p_i ) a_i 
= \sum_{j=1}^N \Big( a_{I_j} - \sum_{i=1}^m p_i a_i \Big) 
= \sum_{j=1}^N \Big( a_{I_j} - \E [a_{I_j}] \Big) . 
\end{align*}
Since this is a sum of i.i.d. zero-mean random variables with absolute values bounded by $2 \|a\|_\infty$, Bernstein's inequality (Theorem~2.8.4 of \cite{Ver18}) implies that 
$$
\p \Big\{ \big| \langle Y - Np, a \rangle \big| \geq t \Big\} \leq 2 \exp \Big( \frac{ - t^2/2 }{ \sigma^2 + 2 \|a\|_\infty t/3 } \Big) , 
$$
where $\sigma^2 = N \E (a_{I_j} - \E [a_{I_j}])^2 \le N \E [ a_{I_j}^2 ] = N \sum_{i=1}^m p_i a_i^2 = N \|a\|_p^2.$
\end{proof}

The following lemma is concerned with projections of a multinomial random vector. 

\begin{lemma}
\label{lem:multi-max}
Suppose that $Y$ is a random vector in $\R^m$ having the multinomial distribution $\Multi ( N, p )$, where $N$ is a positive integer and $p = (p_1, \dots, p_m)^\top$ is a vector in $(0, 1)^m$ with $\sum_{i=1}^m p_i = 1$.  Given vectors $v_1, \ldots, v_\ell \in \R^m$, for any $\delta \in (0, 1]$, it holds with probability at least \( 1 - \delta \) that
$$
\max_{j \in [\ell]} \big| \langle Y - N p, v_j \rangle \big| \lesssim  \Big( \max_{j \in [\ell]} \| v_j \|_p \Big) \sqrt{ N  \log(\ell / \delta) } + \Big( \max_{j \in [\ell]} \| v_j \|_\infty \Big) \log(\ell / \delta). 
$$
\end{lemma}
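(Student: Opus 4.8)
The plan is to combine the Bernstein-type tail bound for linear functionals of a multinomial vector, Lemma~\ref{lem:multi-bern}, with a simple union bound over $j \in [\ell]$. Fix $j$ and abbreviate $a_j \defn N \| v_j \|_p^2$ and $b_j \defn \| v_j \|_\infty$. Lemma~\ref{lem:multi-bern} asserts that for every $t \ge 0$,
$$
\p \big\{ | \langle Y - N p, v_j \rangle | \ge t \big\} \le 2 \exp \Big( \frac{-3 t^2}{6 a_j + 4 b_j t} \Big) .
$$
To force the right-hand side below $\delta / \ell$, it suffices to pick $t$ with $3 t^2 \ge \log(2 \ell / \delta) \, (6 a_j + 4 b_j t)$; writing $L \defn \log(2 \ell / \delta)$, this is the quadratic inequality $3 t^2 - 4 b_j L t - 6 a_j L \ge 0$, which holds as soon as
$$
t \ge t_j \defn \frac{4 b_j L + \sqrt{16 b_j^2 L^2 + 72 a_j L}}{6} \lesssim b_j L + \sqrt{a_j L} = \| v_j \|_\infty \log(2 \ell / \delta) + \| v_j \|_p \sqrt{N \log(2 \ell / \delta)} ,
$$
where we used $\sqrt{16 b_j^2 L^2 + 72 a_j L} \le 4 b_j L + \sqrt{72 a_j L}$ and $\sqrt{a_j L} = \|v_j\|_p \sqrt{NL}$.

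Next, replacing $\| v_j \|_p$ and $\| v_j \|_\infty$ by their maxima over $j$, we obtain a single threshold
$$
t^\ast \lesssim \Big( \max_{j \in [\ell]} \| v_j \|_p \Big) \sqrt{N \log(\ell/\delta)} + \Big( \max_{j \in [\ell]} \| v_j \|_\infty \Big) \log(\ell / \delta)
$$
(the factor $2$ inside the logarithm is absorbed into the implied constant) for which $\p \{ | \langle Y - N p, v_j \rangle | \ge t^\ast \} \le \delta / \ell$ holds for every $j$. A union bound over $j \in [\ell]$ then yields
$$
\p \Big\{ \max_{j \in [\ell]} | \langle Y - N p, v_j \rangle | \ge t^\ast \Big\} \le \sum_{j = 1}^{\ell} \p \big\{ | \langle Y - N p, v_j \rangle | \ge t^\ast \big\} \le \delta ,
$$
which is exactly the stated inequality.

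The argument is entirely routine; the only mildly delicate point is the passage from the exponential tail bound of Lemma~\ref{lem:multi-bern} to an explicit high-probability deviation bound, i.e. inverting the quadratic in $t$ and verifying that the sub-Gaussian contribution $\| v_j \|_p \sqrt{N L}$ and the sub-exponential contribution $\| v_j \|_\infty L$ enter additively rather than multiplicatively. I do not anticipate any genuine obstacle.
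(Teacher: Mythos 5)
Your proposal is correct and follows essentially the same route as the paper, which likewise derives the bound from Lemma~\ref{lem:multi-bern} with a union bound and a choice of $t$ proportional to the stated right-hand side. You simply spell out the inversion of the Bernstein tail (solving the quadratic in $t$) that the paper leaves implicit.
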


\begin{proof}
The result follows from Lemma~\ref{lem:multi-bern} and a union bound, with the choice of $t$ equal to a constant times the right-hand side of the above inequality.
\end{proof}

\subsection{MLE for \texorpdfstring{$\mtp$}{MTP2} distributions on a grid} \label{sec:exist}

Given the observation $Y$ defined by \eqref{eq:jy}, it is well known that the MLE \eqref{eq:mle-1} can be equivalently defined using the following convex program, which can be solved efficiently:
\begin{align} \label{eq:mle-2}
\mle \defn \argmax_{\Done \theta \Dtwo \ge 0} \frac{1}{N} \langle Y, \theta \rangle - \sum_{i, j} e^{\theta_{i, j}} .
\end{align}

\begin{lemma} \label{lem:equivalence}
The two definitions \eqref{eq:mle-1} and \eqref{eq:mle-2} of the MLE $\hat \theta = \mle$ are equivalent.
\end{lemma}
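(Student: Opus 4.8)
The plan is to show that the two optimization problems \eqref{eq:mle-1} and \eqref{eq:mle-2} have the same maximizer by exploiting the homogeneity of the feasible set under additive constants. First I would observe that the constraint set $\{\theta : \Done \theta \Dtwo^\top \ge 0\}$ is invariant under shifts of the form $\theta \mapsto \theta + c \mathbf{1}$ for any $c \in \R$, since $\Done (c\mathbf{1}) \Dtwo^\top = 0$ (each row of $\Done$ and $\Dtwo$ sums to zero, as is clear from \eqref{eq:diff-op}). Also, the normalization constraint $\sum_{i,j} e^{\theta_{i,j}} = 1$ in \eqref{eq:mle-1} simply picks out one representative from each such shift-orbit: given any $\theta$ with $\Done \theta \Dtwo^\top \ge 0$, there is a unique $c$, namely $c = -\log \sum_{i,j} e^{\theta_{i,j}}$, making $\theta + c\mathbf{1}$ satisfy the normalization constraint.

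Next I would analyze the objective of \eqref{eq:mle-2}, call it $F(\theta) = \frac{1}{N}\langle Y, \theta\rangle - \sum_{i,j} e^{\theta_{i,j}}$, along a shift direction. Writing $\theta_c = \theta + c\mathbf{1}$, we have $F(\theta_c) = \frac{1}{N}\langle Y, \theta\rangle + c \cdot \frac{1}{N}\sum_{i,j} Y_{i,j} - e^{c}\sum_{i,j} e^{\theta_{i,j}} = \frac{1}{N}\langle Y, \theta\rangle + c - e^c S$, where $S = \sum_{i,j} e^{\theta_{i,j}}$ and I used $\sum_{i,j} Y_{i,j} = N$. Maximizing the scalar function $c \mapsto c - e^c S$ over $c \in \R$ gives the first-order condition $1 = e^c S$, i.e. $e^c = 1/S$, which is exactly the shift that enforces $\sum_{i,j} e^{\theta_c} = 1$; at this optimal $c^\ast$ we get $F(\theta_{c^\ast}) = \frac{1}{N}\langle Y, \theta\rangle - \log S - 1$. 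Hence along every shift-orbit, the unique maximizer of $F$ is the normalized representative, and its value is $\frac{1}{N}\langle Y, \theta\rangle - \log S - 1 = \frac{1}{N}\langle Y, \theta_{c^\ast}\rangle - 1$ (using $\langle Y, c^\ast \mathbf 1\rangle = N c^\ast$).

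Putting these together: any maximizer $\hat\theta$ of \eqref{eq:mle-2} must be normalized (otherwise shifting it to its normalized representative strictly increases $F$), so it is feasible for \eqref{eq:mle-1}, where its objective value is $\langle Y, \hat\theta\rangle$. Conversely, for any $\theta$ feasible for \eqref{eq:mle-1} we have $F(\theta) = \frac{1}{N}\langle Y,\theta\rangle - 1$, so maximizing $F$ over the normalized feasible set is equivalent to maximizing $\langle Y, \theta\rangle$, which is \eqref{eq:mle-1}. Since $F(\theta_{c^\ast}) \ge F(\theta)$ for all feasible $\theta$ with equality only at the normalized representative, the maximizer of \eqref{eq:mle-2} coincides with that of \eqref{eq:mle-1}. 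I would also remark, for completeness, that both programs do attain their maxima (the objective of \eqref{eq:mle-2} is concave and coercive on the relevant slices, and \eqref{eq:mle-1} is a maximization of a linear functional over a nonempty set that, modulo the one unbounded shift direction already accounted for, is effectively compact because the box-type bounds implied by $Y$ being observed keep things controlled — alternatively one simply invokes the equivalence with \eqref{eq:mle-2} whose solution exists).

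The main obstacle is essentially bookkeeping rather than any deep difficulty: one must be careful to argue that a maximizer of \eqref{eq:mle-2} is automatically normalized (this is the content of the scalar optimization in $c$ and is the crux), and to handle the uniqueness/existence subtleties cleanly. There is a minor notational wrinkle in that \eqref{eq:mle-2} as displayed writes $\Done \theta \Dtwo \ge 0$ rather than $\Done \theta \Dtwo^\top \ge 0$; I would treat this as a typographical variant of the same supermodularity constraint \eqref{eq:discrete-mtp2} and proceed accordingly, noting that the shift-invariance argument is insensitive to which transpose convention is used since row sums of $\Done$ and $\Dtwo$ both vanish.
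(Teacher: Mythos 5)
Your proposal is correct and follows essentially the same route as the paper: you show that any maximizer of \eqref{eq:mle-2} must already satisfy $\sum_{i,j} e^{\theta_{i,j}} = 1$ (your scalar optimization over the shift $c$ is exactly the paper's comparison of $\hat\theta$ with its normalized shift via $\log x + 1 < x$ for $x \neq 1$), and then note that on the normalized slice the two objectives differ by a constant. The extra remarks on existence and the transpose typo are harmless and not needed for the equivalence claim.
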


\begin{proof}
It suffices to verify that $\hat \theta$ given by program \eqref{eq:mle-2} always satisfies $\sum_{i, j} e^{\hat \theta_{i, j}} = 1$. Suppose this is not the case, and define $\tilde \theta \in \R^{\dimone \times \dimtwo}$ by $\tilde \theta_{i, j} = \hat \theta_{i, j} - \log \sum_{k, \ell} e^{\hat \theta_{k, \ell}}$ so that $\sum_{i, j} e^{\tilde \theta_{i, j}} = 1$. Then we have
\begin{align}
\frac 1N \langle Y, \tilde \theta \rangle - \sum_{i, j} e^{\tilde \theta_{i, j}} = \frac 1N \langle Y, \hat \theta \rangle - \frac 1N \Big( \sum_{i, j} Y_{i, j} \Big) \log \sum_{i, j} e^{\hat \theta_{i, j}} - 1 > \frac 1N \langle Y, \hat \theta \rangle - \sum_{i, j} e^{\hat \theta_{i, j}} ,
\end{align}
since $\sum_{i, j} Y_{i, j} = N$ and $\log (x) + 1 < x$ for any $x \ne 1$. However, this gives a contradiction.
\end{proof}

\subsection{Rate of convergence of the empirical frequency matrix}
\label{sec:emp}

Let us consider the empirical frequency matrix $Y/N$ in the discrete setting, where $Y$ is defined by \eqref{eq:jy}. Without leveraging the $\mtp$ constraint, it achieves the following trivial rate of estimation. 

\begin{lemma}
\label{lem:emp}
In the setting of Section~\ref{sec:grid}, the empirical frequency matrix $Y/N$ satisfies
$$
\E \big[ \hel^2(p^*, Y/N) \big] \le \frac{\dimone \dimtwo}{ N } . 
$$
\end{lemma}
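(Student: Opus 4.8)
The plan is to control the squared Hellinger distance entry by entry, thereby reducing the statement to a one-line variance computation for a binomial. Recall that on the discrete space $[\dimone]\times[\dimtwo]$ equipped with the counting measure,
\[
\hel^2(p^*, Y/N) = \sum_{i\in[\dimone],\, j\in[\dimtwo]} \Big(\sqrt{p^*_{i,j}} - \sqrt{Y_{i,j}/N}\,\Big)^2 .
\]
For any index $(i,j)$ with $p^*_{i,j}=0$ we have $Y_{i,j}=0$ almost surely, so the corresponding summand vanishes and may be discarded; the argument below only concerns indices in the support of $p^*$.

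For those indices I would invoke the elementary inequality
\[
\big(\sqrt a - \sqrt b\,\big)^2 = \frac{(a-b)^2}{\big(\sqrt a + \sqrt b\,\big)^2} \le \frac{(a-b)^2}{a}, \qquad a>0,\ b\ge 0 ,
\]
which holds because $(\sqrt a + \sqrt b\,)^2 \ge a$. Applying it with $a = p^*_{i,j}$ and $b = Y_{i,j}/N$, then taking expectations and using that marginally $Y_{i,j}\sim\Bin(N, p^*_{i,j})$ — so that $\E\big[(Y_{i,j}/N - p^*_{i,j})^2\big] = \Var(Y_{i,j})/N^2 = p^*_{i,j}(1-p^*_{i,j})/N$ — yields
\[
\E\Big[\big(\sqrt{p^*_{i,j}} - \sqrt{Y_{i,j}/N}\,\big)^2\Big] \le \frac{p^*_{i,j}(1-p^*_{i,j})}{N\,p^*_{i,j}} = \frac{1-p^*_{i,j}}{N} \le \frac{1}{N}.
\]

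Summing this bound over the at most $\dimone\dimtwo$ grid points and using linearity of expectation gives $\E\big[\hel^2(p^*, Y/N)\big] \le \dimone\dimtwo/N$, as claimed. There is essentially no obstacle in this argument: the only point that warrants a word of care is the degenerate case $p^*_{i,j}=0$, which is handled above by noting that $\hel^2$ only depends on the support of $p^*$.
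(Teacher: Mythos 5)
Your argument is essentially identical to the paper's: both use the algebraic identity $\sqrt a - \sqrt b = (a-b)/(\sqrt a + \sqrt b)$ to bound each summand by $(p^*_{i,j} - Y_{i,j}/N)^2 / p^*_{i,j}$, then apply the marginal binomial variance $\Var(Y_{i,j}) = N p^*_{i,j}(1-p^*_{i,j})$ and sum over the grid. The only difference is your explicit (and harmless) remark about the degenerate case $p^*_{i,j}=0$, which the paper passes over silently.
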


\begin{proof}
For any $i \in [\dimone]$ and $j \in [\dimtwo]$, we have $Y_{i,j} \sim \Bin(N, p^*_{i,j})$ marginally. 
Thus we have
\begin{align}
\E \big[ \hel^2(p^*, Y/N) \big]
&= \sum_{i=1}^{\dimone} \sum_{j=1}^{\dimtwo} \E \Big( \sqrt{ p^*_{i,j} } - \sqrt{ Y_{i,j}/N } \, \Big)^2 
= \sum_{i=1}^{\dimone} \sum_{j=1}^{\dimtwo} \E \Big( \frac{ p^*_{i,j} - Y_{i,j}/N }{ \sqrt{ p^*_{i,j} } + \sqrt{ Y_{i,j}/N } } \Big)^2 \label{eq:line1} \\
&\le  \sum_{i=1}^{\dimone} \sum_{j=1}^{\dimtwo} \E \frac{ (p^*_{i,j} - Y_{i,j}/N)^2 }{ p^*_{i,j} } 
= \sum_{i=1}^{\dimone} \sum_{j=1}^{\dimtwo} \frac{ p^*_{i,j} (1 - p^*_{i,j}) }{ p^*_{i,j} N } 
\le \frac{\dimone \dimtwo}{ N } . \notag
\end{align}
\end{proof}




\section{Further details on numerical experiments}
\label{sec:num-extended}

\subsection{Implementation details}
\label{sec:implementation-details}

All simulations are run with Julia 1.4.1 \cite{bezanson2017julia}, where, besides the standard library, we use the libraries Cubature (version 1.5.1), Distributions \cite{Distributions.jl-2019, 2019Distributionspaper} (version 0.23.2), StatsBase (version 0.33.0), PyPlot (version 2.9.0), and GLM \cite{JuliaGLM} (version 1.3.9).

Algorithm \ref{alg:fast-proj} is stopped at a relative distance in the Frobenius norm between two consecutive iterates of less than \( 10^{-6} \) or 400,000 iterations, whichever comes first.
Similarly, Algorithm \ref{alg:prox-newton} is stopped at a relative distance of \( 10^{-5} \) or 100 iterations.
The distribution \( p^\ast \) in \eqref{eq:ir} is sampled as a multinomial distribution via the Distributions package, while the distribution corresponding to the density \( \rho^\ast \) in \eqref{eq:ka} is sampled via rejection sampling from the corresponding Gaussian distribution.
For the calculation of Hellinger distances in the continuous case, we use numerical integration with the Cubature package.

For the calculation of the oracle estimator in Figure \ref{fig:cts_oracle}, we computed the corresponding estimators for \( n \in \{4, 7, 10, 15, 23, 36, 55, 84, 130, 201 \} \) and picked the \( n \) achieving the best squared Hellinger distance to the ground truth in each case.

\subsection{Numerical instability of the MLE for small values of \( N \)}
\label{sec:num-instability}

As observed in Section~\ref{sec:numerics}, for small values of \( N \), Algorithm~\ref{alg:prox-newton} can become unstable and values in the iterate \( \theta \) can underflow due to a large number of zeros in the empirical frequency matrix.
To illustrate this, we perform the same experiment as in Figure~\ref{fig:pmf_large_V_var_N} with \( N = 100 \), which leads to a large error for the unconstrained MLE, which we plot in Figure~\ref{fig:pmf_bad}.
However, this behavior can be remedied by introducing an additional constraint of
\begin{equation}
	\label{eq:h}
	\theta \in \mathcal{\tilde C} \defn \{\theta : \exp(\theta_{i, j}) \ge \epsilon, \quad i, j \in [\dimone] \times [\dimtwo] \}.
\end{equation}
in the calculation of the MLE, where \( \epsilon \) is small.
For example, in this experiment, we specify \( \epsilon = e^{-30} \).
This leads to the estimator
\begin{align}
	\label{eq:ab}
	\tilde \theta^{\mathrm{lb}} \defn {} & \argmax_{\substack{ \Done \theta \Dtwo^\top \ge 0 \\ \theta \in \mathcal{\tilde C} }} \frac{1}{\samples} \langle Y, \theta \rangle - \sum_{i \in [\dimone] , \,  j \in [\dimtwo]} e^{\theta_{i, j}}, \\
	\hat \theta^{\mathrm{lb}}_{i, j} \defn {} & \tilde \theta^{\mathrm{lb}}_{i, j} - \log \sum_{r \in [\dimone], \, s \in [\dimtwo]} e^{\tilde \theta^{\mathrm{lb}}_{r, s}} \quad \text{ for } i \in [\dimone], \, j \in [\dimtwo] .
\end{align}
The constraint \eqref{eq:h} can be incorporated into Algorithm~\ref{alg:fast-proj} in the same way as the constraint \( \theta \in \mathcal{C}(Y) \) by iterative projection of each component \( \theta_{i, j} \) onto the corresponding interval \( [\log(\epsilon), \infty) \).
As can be seen in Figure~\ref{fig:pmf_bad}, this modification (``lower-bounded MLE'') is sufficient to overcome the problem of numerical instability when facing a small sample size.

\begin{figure}[H]
		\includegraphics[width=0.45\textwidth]{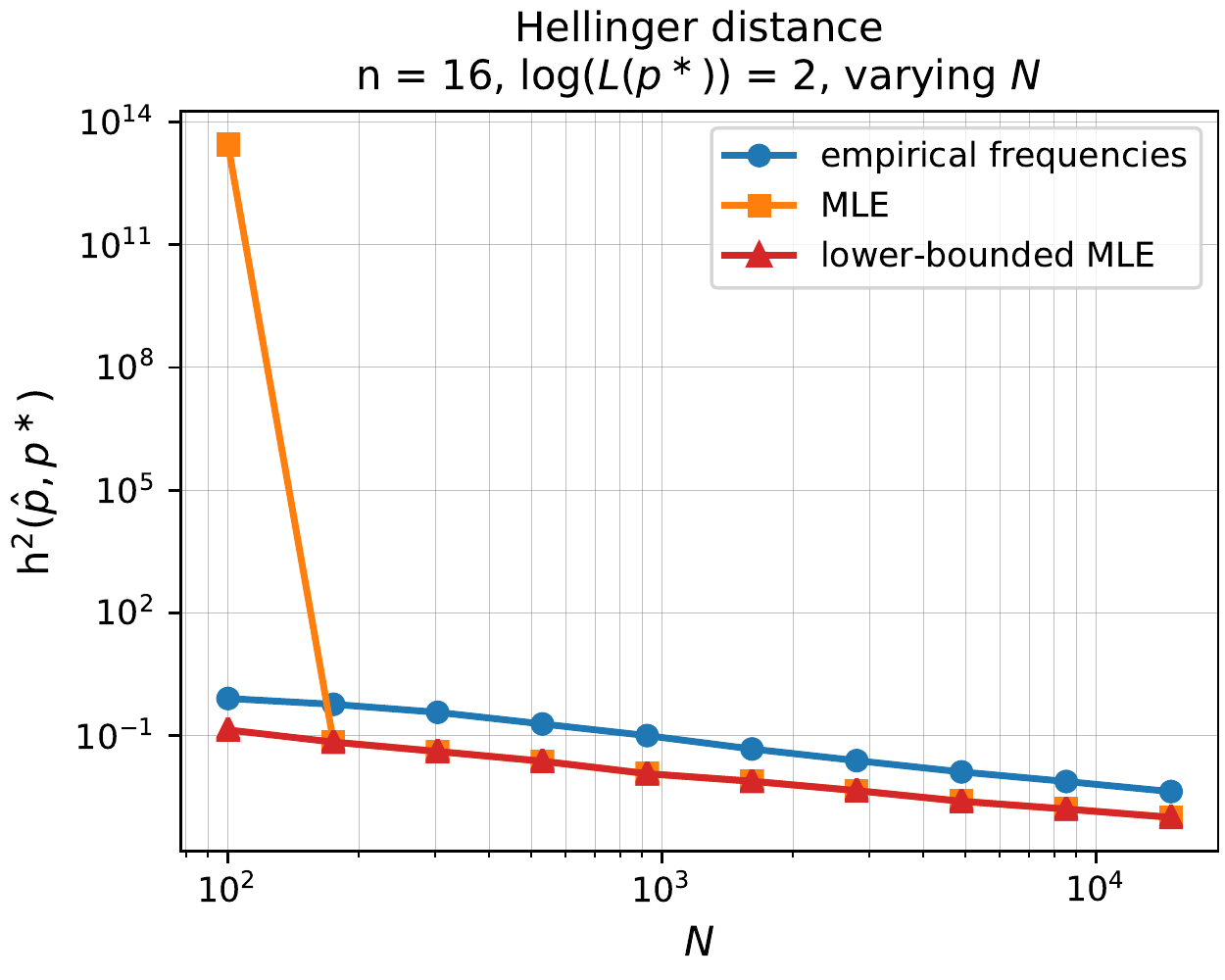}
		\caption{Instability for small sample sizes in the Hellinger distance for varying \( N \) and \( \log(L(p^\ast)) = 2 \)}
		\label{fig:pmf_bad}
\end{figure}

\section*{Acknowledgments}
PR was supported in part by NSF awards IIS-1838071, DMS-1712596 and DMS-TRIPODS-1740751; ONR grant N00014-17- 1-2147 and grant 2018-182642 from the Chan Zuckerberg Initiative DAF. ER was supported in part by an NSF MSPRF DMS-1703821. 
We thank the anonymous reviewers for their constructive comments.

\bibliographystyle{abbrv}
\bibliography{density}

\end{document}